\begin{document}
\textwidth 5.5in
\textheight 8.3in
\evensidemargin .75in
\oddsidemargin.75in
\newtheorem{quest}{Question}[section]
\newtheorem{thm}[quest]{Theorem}
\newtheorem{lem}[quest]{Lemma}
\newtheorem{dfn}[quest]{Definition}
\newtheorem{conj}[quest]{Conjecture}
\newtheorem{cor}[quest]{Corollary}
\newtheorem{prop}[quest]{Proposition}
\newtheorem{prob}[quest]{Problem}
\newtheorem{claim}[quest]{Claim}
\newtheorem{exm}[quest]{Example}
\newtheorem{cond}{Condition}
\newtheorem{rmk}[quest]{Remark}
\newtheorem{que}[quest]{Question}
\newcommand{\p}[3]{\Phi_{p,#1}^{#2}(#3)}
\def\tu{\widetilde{\Upsilon}}
\def\Z{\mathbb Z}
\def\N{\mathbb N}
\def\C{\mathcal{C}}
\def\D{\mathcal{D}}
\def\R{\mathbb R}
\def\g{\overline{g}}
\def\odots{\reflectbox{\text{$\ddots$}}}
\newcommand{\tg}{\overline{g}}
\def\ee{\epsilon_1'}
\def\ef{\epsilon_2'}
\title{CONSTRUCTIONS OF HOMOTOPY 4-SPHERES BY POCHETTE SURGERY}
\author{TATSUMASA SUZUKI}
\thanks{The author is supported by JST SPRING, Grant Number JPMJSP2106}
\subjclass{57R65,57K40}
\keywords{4-manifolds, pochette surgery, handle calculus}

\address{Department of Mathematics, Tokyo Institute of Technology, 2-12-1 Ookayama, Meguro-ku, Tokyo 152-8551, Japan}
\email{suzuki.t.do@m.titech.ac.jp}

\date{\today}


\maketitle
\begin{abstract}
The pochette surgery, which was discovered by Iwase and Matsumoto, is a generalization of the Gluck surgery.
In this paper we construct infinitely many embeddings of a pochette into the $4$-sphere and prove that homotopy $4$-spheres obtained from surgeries along these embedded pochettes are all diffeomorphic to the $4$-sphere.
\end{abstract}

\section{Introduction}
One of famous conjectures in $4$-manifold topology is the $4$-dimensional smooth Poincar\'e conjecture, which states that every homotopy $4$-sphere is diffeomorphic to the $4$-sphere.
A Gluck surgery on a $4$-manifold $X$ is an operation of removing the interior of a tubular neighborhood of a $2$-sphere in $X$ with trivial normal Euler number from $X$ and gluing $D^2\times S^2$ by a non-trivial diffeomorphism of the boundary $S^1\times S^2$. 
All Gluck surgeries on the $4$-sphere create homotopy $4$-spheres. 
Whether these manifolds are diffeomorphic to the $4$-sphere is a well-known unsolved problem.
The homotopy $4$-sphere obtained by the Gluck surgery along a spun $2$-knot or a $0$-slice $2$-knot is diffeomorphic to the $4$-sphere \cite{G, Me}.

In 2004, Iwase and Matsumoto \cite{IM} introduced a generalization of Gluck surgery called pochette surgery. 
A pochette is the boundary sum $P=S^1\times D^3\natural D^2\times S^2$ of $S^1\times D^3$ and $D^2\times S^2$. 
A pochette surgery on a $4$-manifold $X$ is an operation of removing the interior of $P$ embedded in $X$ from $X$ and gluing $P$ to $X-\mathrm{int}\, P$ by a diffeomorphism of $\partial P$.
The diffeomorphism type of the manifold $X'$ obtained by a pochette surgery along $P$ embedded in $X$ is determined by the embedding $e:P\rightarrow X$, an element $p/q$ of $\mathbb{Q}\cup\{\infty\}$ called the slope, and an element $\varepsilon$ of $\{0,1\}$ called the mod $2$ framing because the isotopy class of the gluing diffeomorphism of $\partial P$ is characterized by $p/q$ and $\varepsilon$. We denote $X'$ by $X(e,p/q,\varepsilon)$ and call it the pochette surgery on $X$ for $e,p/q,\varepsilon$. 
The manifold $X(e,1/0,1)$ is nothing but the Gluck surgery on $X$ for the embedded $2$-sphere $e(\{0\}\times S^2)$.

Let $DP=P\cup (-P)$ be the double of $P$ and $i_P:P\rightarrow DP$ the inclusion map. 
Kashiwagi \cite{K} found an algorithm for drawing handle diagrams of pochette surgeries of $DP$ and showed that $DP(i_P,1/q,\varepsilon)$ is diffeomorphic to the Pao manifold $L(q;0,1;\varepsilon)$(see \cite{P}). 
Murase \cite{Mu} constructed handle diagrams of all pochette surgeries of $DP$ and proved that $DP(i_P,p/q,\varepsilon)$ is diffeomorphic to $L(q;0,1;\varepsilon)$. 

We can consider $P$ as $h^0\cup h^1\cup h^2$, where $h^i$ is an $i$-handle for $i=0,1,2$.
Okawa \cite{O} proved that if the pochette surgery $S^4(e,p/q,\varepsilon)$ is a homology $4$-sphere and the core of $e(h^1)$ is `trivial' in $S^4-\mathrm{int}\, e(h^0\cup h^2)$, then $p$ must be $1$. 
He also showed that $S^4(e,1/q,\varepsilon)$ is diffeomorphic to the $4$-sphere if $e(\{0\}\times S^2)$ is a ribbon $2$-knot.

In this paper we construct infinitely many embeddings of $P$ into the $4$-sphere and prove that homotopy $4$-spheres obtained from surgeries along these embedded pochettes are all diffeomorphic to the $4$-sphere. 

The diagram depicted in Figure \ref{Theorem 1.1-1} is a handle diagram for the $4$-sphere, where $k$ is an integer greater than one, $n=(n_1,\ldots ,n_{k^2-1})$ is a $(k^2-1)$-tuple of integers, and the sign of $\pm 1$ can be taken arbitrarily. 
Let $e_{k,n}:P\rightarrow S^4$ be the inclusion map from the pochette $P$ which consists of the $0$-handle, the $1$-handle presented by the leftmost dotted circle, and the $2$-handle presented by the rightmost $0$-framed unknot in Figure \ref{Theorem 1.1-1}.
\begin{thm}
\label{pochette surgeries of S4 part1}
The pochette surgery $S^4(e_{k,n},1/q,\varepsilon)$ on the $4$-sphere $S^4$ for $e_{k,n}$, $1/q$, $\varepsilon$ is diffeomorphic to $S^4$ for every $k$, $n$, $q$ and $\varepsilon$ (See also Figure \ref{Theorem 1.1-2}). 
\end{thm}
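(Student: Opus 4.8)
The plan is to prove this by explicit handle calculus, following the strategy pioneered by Kashiwagi and Murase for the double $DP$ and adapted by Okawa to embeddings into $S^4$. Since the surgery coefficient is of the form $1/q$ (so that $p=1$, consistent with Okawa's obstruction), the gluing diffeomorphism is comparatively simple: a $1/q$-surgery along a pochette amounts to re-gluing the $2$-handle $h^2$ with a twist governed by $q$ and the mod $2$ framing $\varepsilon$, while the $1$-handle $h^1$ is treated by the algorithm from \cite{K, Mu}. First I would write down, starting from the given diagram in Figure \ref{Theorem 1.1-1}, a handle diagram for $S^4(e_{k,n},1/q,\varepsilon)$. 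Concretely, the surgery replaces the rightmost $0$-framed $2$-handle and modifies the attaching data according to the slope $1/q$ and framing $\varepsilon$; the effect on the diagram is to slide the belt sphere of $h^2$ around the $1$-handle $q$ times and to adjust the framing by $\varepsilon$. I expect the resulting diagram to be presentable as in Figure \ref{Theorem 1.1-2}.

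Next I would simplify the resulting diagram to the empty diagram (the standard $S^4$) purely by Kirby moves: handle slides, isotopies, and creation/cancellation of canceling handle pairs. The key structural feature I would exploit is the ladder of integer framings $n=(n_1,\ldots,n_{k^2-1})$ together with the $\pm 1$-framed components. Because the original diagram represents $S^4$ for every choice of $n$ and of the signs, there must be a sequence of cancellations that eliminates all of these $2$-handles against $1$-handles (dotted circles) or against one another; I would first record this cancellation scheme in the unsurgered diagram, then check that it survives, after suitable modification, the insertion of the $q$-fold twist coming from the surgery. The guiding principle is that the $1$-handle of the pochette, being ``trivial'' in the complement in the sense relevant to Okawa's argument, can be cancelled against a dual $2$-handle, after which the $2$-handle $h^2$ of the pochette — now carrying the twisted framing — can be unknotted and cancelled in turn.

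The main obstacle will be controlling how the $q$-fold twisting interacts with the integer framings $n_i$ and the $\pm 1$ components: a priori the twist could link the surgered $2$-handle with the ladder in a way that obstructs the cancellation sequence valid for $q=0$. The crux is therefore to show that the twist can be absorbed. I anticipate that sliding the surgered handle over the dotted circle $q$ times introduces only framing changes and linking that are themselves undone by further slides over the $\pm 1$-framed unknots, whose role is precisely to serve as ``blow-down'' handles that absorb the twisting (this is the standard mechanism by which a $\pm 1$-framed unknot geometrically untwists a strand passing through it). Verifying that the signs of the $\pm 1$ framings and the parities of the $n_i$ conspire correctly, independently of $q$ and $\varepsilon$, is the delicate bookkeeping step.

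Finally, after reducing to the empty diagram I would conclude that $S^4(e_{k,n},1/q,\varepsilon)$ is diffeomorphic to $S^4$. Since every step is an explicit Kirby move valid for all $k$, $n$, $q$, and $\varepsilon$, this yields the theorem uniformly. I would present the argument as an annotated sequence of figures transforming Figure \ref{Theorem 1.1-2} into the standard handle picture of $S^4$, with the twist-absorption step highlighted as the essential point.
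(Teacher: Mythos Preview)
Your proposal is a plan rather than a proof, and the route you outline diverges from the paper's at the key step. You propose to simplify Figure~\ref{Theorem 1.1-2} \emph{directly}, absorbing the $q$-fold twist by sliding over the $\pm 1$-framed components (``blow-down'' moves). The paper does not do this for Theorem~\ref{pochette surgeries of S4 part1}. Instead, it \emph{turns the handle decomposition upside down}: it removes the $3$- and $4$-handle from Figure~\ref{Theorem 1.1-2}, takes the double, and discards the original $1$- and $2$-handles, obtaining a diagram built entirely from the dual handles (the $\langle\cdot\rangle$-framed components). The simplification then proceeds in this dual picture through a long sequence of slides and cancellations, collapsing the triangular ladder layer by layer until only a small configuration remains, at which point the $q$-twist and the $\varepsilon$-framing are eliminated by slides on $\langle 0\rangle$-framed meridians (not by blow-downs over the $\pm 1$-knots). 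Only at the very end is a $\langle 0\rangle$-component converted back to a dotted circle and cancelled.

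Your absorption-by-$\pm 1$ idea is in fact close to what the paper does for Theorem~\ref{pochette surgeries of S4 part2}, where a slide over the single $\pm 1$-framed knot changes $q$ to $q\mp 1$ and iteration reduces to $q=0$. But the diagram for Theorem~\ref{pochette surgeries of S4 part1} has a more intricate triangular array of $\pm 1$-components and $n_i$-framed knots, and you have not exhibited any concrete move showing that a slide over one of these $\pm 1$-knots untwists the surgered handle without creating new linking that obstructs the rest of the cancellation. The ``delicate bookkeeping step'' you flag is exactly where the content of the proof lies, and the paper's choice to pass to the dual decomposition is what makes that bookkeeping tractable. As it stands, your proposal identifies the right difficulty but does not overcome it; you would need either to supply the explicit sequence of slides realizing your direct strategy, or to adopt the upside-down maneuver.
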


\begin{figure}[htbp]
\begin{overpic}[scale=0.5]
{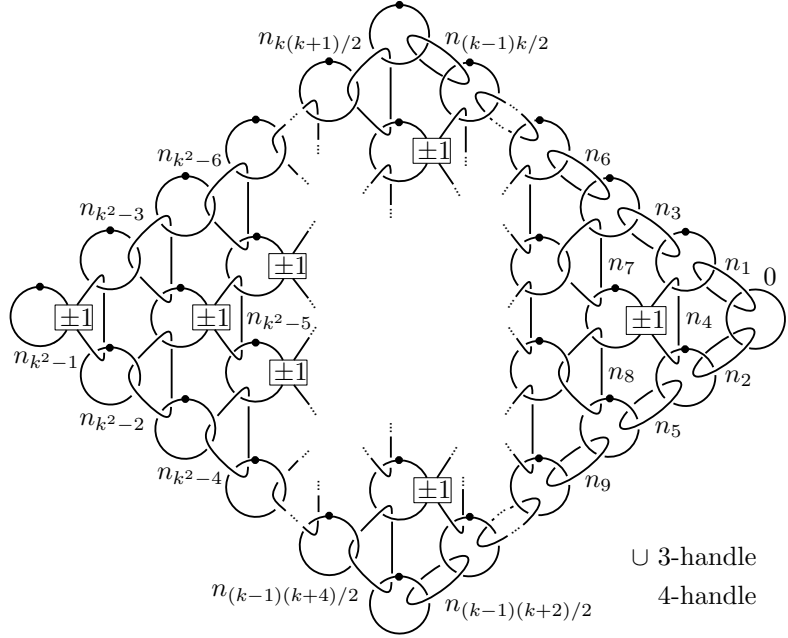}
\put(97,45){$0$}
\put(92,47){$n_1$}
\put(92,32.5){$n_2$}
\put(83,54){$n_3$}
\put(87,40){$n_4$}
\put(83,26){$n_5$}
\put(74,61){$n_6$}
\put(77,47){$n_7$}
\put(77,33){$n_8$}
\put(74,19){$n_9$}

\put(56,76){$n_{(k-1)k/2}$}
\put(56,3){$n_{(k-1)(k+2)/2}$}
\put(32,76){$n_{k(k+1)/2}$}
\put(26,5){$n_{(k-1)(k+4)/2}$}

\put(19,61){$n_{k^2-6}$}
\put(30.5,40){$n_{k^2-5}$}
\put(19,20){$n_{k^2-4}$}
\put(9,54){$n_{k^2-3}$}
\put(9,27){$n_{k^2-2}$}
\put(0.5,35){$n_{k^2-1}$}

\put(80,39.5){$\pm1$}
\put(52.5,61.3){$\pm1$}
\put(52.5,17.6){$\pm1$}
\put(34,46.5){$\pm1$}
\put(34,32.8){$\pm1$}
\put(24,39.8){$\pm1$}
\put(6.5,39.8){$\pm1$}

\put(80,9){$\cup$ 3-handle}
\put(83.4,4){4-handle}
\end{overpic}
   \caption{A handle diagram of the 4-sphere.}
   \label{Theorem 1.1-1}
\end{figure}

The diagram depicted in Figure \ref{Theorem 1.2-1} is also a handle diagram for the $4$-sphere, where $s, t$ are positive integers, 
$m=(m_1, \ldots, m_s)$ is a $s$-tuple of integers, and $n=(n_1, \ldots, n_{st+1})$ is a $(st+1)$-tuple of integers such that $\sum_{i=1}^{s}m_i=0$.  
Let $e_{m,n}:P\rightarrow S^4$ be the inclusion map from $P$ which consists of the $0$-handle, the $1$-handle presented by the leftmost dotted circle, and the $2$-handle presented by the rightmost $0$-framed unknot in Figure \ref{Theorem 1.2-1}. 

\begin{thm}
\label{pochette surgeries of S4 part2}
The pochette surgery $S^4(e_{m,n},1/q,\varepsilon)$ on the $4$-sphere $S^4$ for $e_{m,n}$, $1/q$, $\varepsilon$ is diffeomorphic to $S^4$ for every $m$, $n$, $q$ and $\varepsilon$ (See also Figure \ref{Theorem 1.2-2}). 
\end{thm}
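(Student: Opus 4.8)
The plan is to argue by handle calculus, in parallel with the proof of Theorem \ref{pochette surgeries of S4 part1}. First I would apply the algorithm of Kashiwagi \cite{K} and Murase \cite{Mu} to the handle diagram of $S^4$ in Figure \ref{Theorem 1.2-1} to produce a handle diagram of $S^4(e_{m,n},1/q,\varepsilon)$; this is the diagram displayed in Figure \ref{Theorem 1.2-2}. The structural feature to exploit is that this diagram differs from Figure \ref{Theorem 1.2-1} only in a neighborhood of the embedded pochette: because the numerator of the slope is $1$, the $0$-framed $2$-handle $e_{m,n}(h^2)$ is replaced by a component that passes $q$ times over the $1$-handle $e_{m,n}(h^1)$, and the mod $2$ framing $\varepsilon$ contributes at most one extra full twist, while all of the $m_i$- and $n_i$-framed $2$-handles and the $\pm 1$-framed unknots are left unchanged.

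The goal is then to undo this local modification and return to Figure \ref{Theorem 1.2-1}. By construction the core of $e_{m,n}(h^1)$ is unknotted and unlinked from $e_{m,n}(h^0\cup h^2)$---this is precisely the situation in which Okawa's analysis \cite{O} forces the numerator of the slope to equal $1$---and it bounds an embedded disk disjoint from the remaining $2$-handles. Using this disk I would slide the $q$-dependent component over the $1$-handle, removing the $q$ passes one at a time; each such move is a handle slide and hence a diffeomorphism, so after $q$ of them the component is back in its original $0$-framed unknotted position. The additional full twist present when $\varepsilon=1$ is removed by one further slide, and the two parities are thereby treated uniformly.

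At this point the diagram has been identified with Figure \ref{Theorem 1.2-1}, so it remains to confirm that Figure \ref{Theorem 1.2-1} itself presents $S^4$. This is where the hypothesis $\sum_{i=1}^{s} m_i=0$ enters: sliding the $\pm 1$-framed unknots along the chain of $2$-handles indexed by $s$ and $t$ (equivalently, performing Rolfsen twists) makes the framings telescope, and they cancel to zero precisely because the $m_i$ sum to zero. One is then left with canceling $1$-handle/$2$-handle and $2$-handle/$3$-handle pairs, whose removal leaves a single $0$-handle together with a single $4$-handle, that is, $S^4$.

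The step I expect to be the main obstacle is the unwinding of the slope: one must check that the $q$ passes over $e_{m,n}(h^1)$ can be removed by slides for every $q$, in the presence of the complicated background link, and that these slides never disturb the subsequent telescoping of the $m_i$ and $n_i$ framings. Keeping track of how the framings and linking numbers transform under each slide throughout the general grid parametrized by $s$ and $t$, uniformly in all of the integer parameters, is where the bulk of the computation lies.
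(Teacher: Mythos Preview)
Your overall shape---produce Figure~\ref{Theorem 1.2-2} from Proposition~\ref{recipe of pochette surgery}, then reduce $q$ to $0$ by handle slides, then cancel---matches the paper. The substantive gap is the sentence ``it bounds an embedded disk disjoint from the remaining $2$-handles.'' In Figure~\ref{Theorem 1.2-1} the leftmost dotted circle is linked by the $\pm 1$-framed unknot and by the $m_i$-chain; there is no evident spanning disk missing all of the other attaching circles, so you cannot simply slide the $q$ strands off one at a time as if the cord were trivial in Okawa's sense. (Your invocation of \cite{O} is also backwards: Okawa assumes the cord is trivial and concludes $p=1$; here we already know $p=1$ and want to conclude something about the diagram.)

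This is exactly why the paper's reduction of $q$ is not local. In the actual proof one first collapses the $t$-direction to reach Figure~\ref{Theorem 1.2proof5}, and then each decrement $q\mapsto q\mp 1$ requires dragging the $(x_1+\varepsilon)$-framed strand through the entire stack of dotted circles linked with the $n_{st+1}$-framed knot (Figures~\ref{Theorem 1.2proof10}--\ref{Theorem 1.2proof15}); the hypothesis $\sum_{i=1}^s m_i=0$ is what makes the accumulated twisting vanish after passing through all $s$ levels, so that the diagram returns to the same form with $q$ changed by one. In other words, $\sum m_i=0$ is consumed inside the $q$-reduction, not merely in the final verification that Figure~\ref{Theorem 1.2-1} is $S^4$ (that last verification is just direct cancellation of $1$-/$2$- and $2$-/$3$-handle pairs). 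Your plan places the use of $\sum m_i=0$ at the wrong stage and assumes away precisely the entanglement that makes the argument nontrivial.
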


\begin{figure}[htbp]
\begin{overpic}[scale=0.5]
{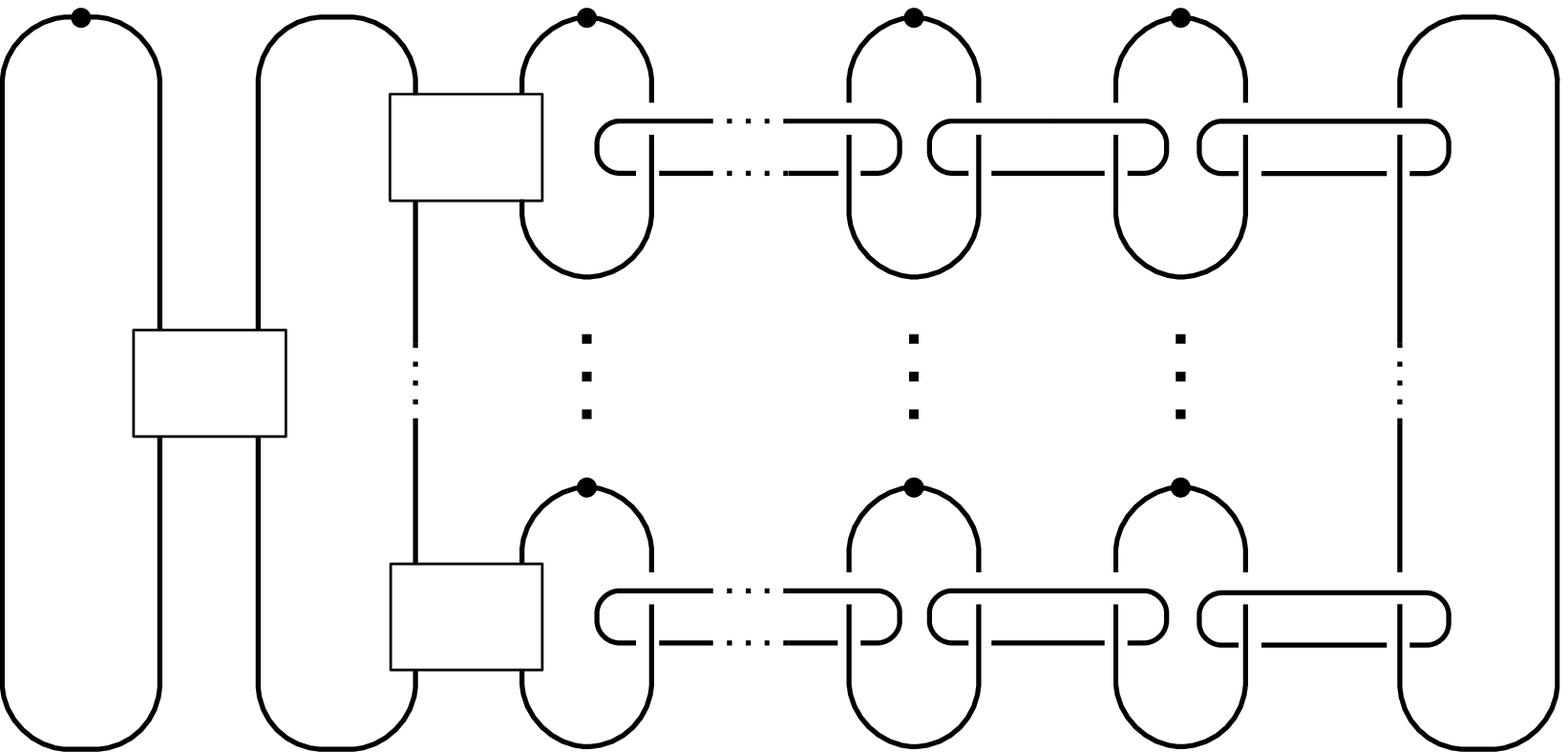}
\put(101,40){$0$}

\put(83,29){$n_1$}
\put(83,4){$n_s$}
\put(63,29){$n_{s+1}$}
\put(64,4){$n_{2s}$}

\put(40.5,29){$n_{s(t-1)+1}$}
\put(46,4){$n_{st}$}
\put(27.5,27.5){$n_{st+1}$}

\put(28,38){$m_1$}
\put(28,8){$m_s$}

\put(11,22.5){$\pm1$}

\put(105,24){$\cup$ 3-handle}
\put(109.2,19){4-handle}
\end{overpic}
   \caption{Another handle diagram of the 4-sphere.}
   \label{Theorem 1.2-1}
\end{figure}

\newpage
In Section 2 we review a precise definition and known properties of pochette surgery. 
In Section 3 we give proofs of the main results. 
We assume that all manifolds are smooth, compact and oriented and all maps are smooth.
\subsection*{Acknowledgements}
I am deeply grateful to my adviser, Hisaaki Endo for giving him courteous instructions in mathematics since I was a master's student.
I would also like to express my sincere gratitude to Motoo Tange of the University of Tsukuba for contributing to his knowledge of pochette surgery and giving the definition of the mod 2 framings and methods of handle calculus. 
Finally I want to thank Koji Yamazaki for suggesting the relationship between pochette surgery on the $4$-sphere and homotopy equivalence.
Furthermore I want to thank them for teaching us how to write a math paper. 

\begin{figure}[htbp]
\begin{overpic}[scale=0.5]
{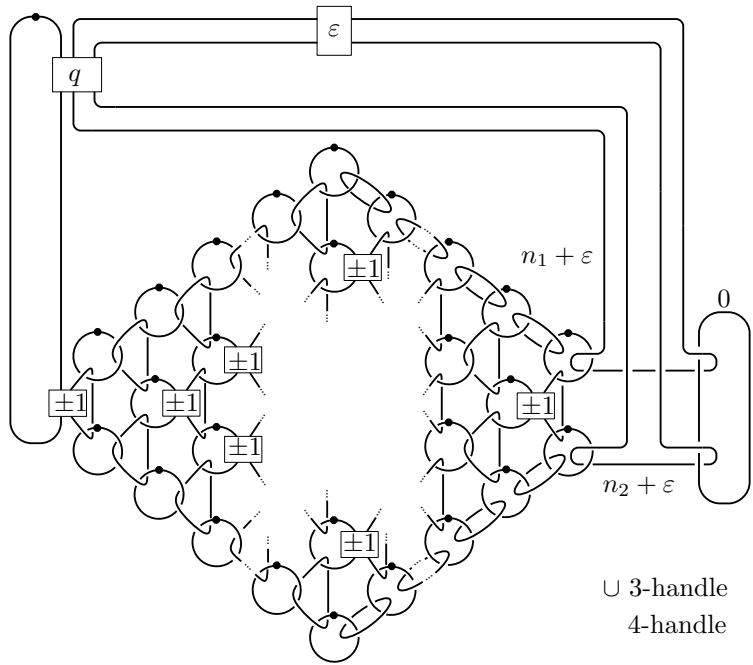}
\put(95.5,48){$0$}
\put(69,54){$n_1+\varepsilon$}
\put(80,23){$n_2+\varepsilon$}

\put(69,33.5){$\pm1$}
\put(45.5,52){$\pm1$}
\put(45,15){$\pm1$}
\put(29.5,39.5){$\pm1$}
\put(29.5,28){$\pm1$}
\put(21,34){$\pm1$}
\put(6,34){$\pm1$}

\put(8,78.5){$q$}
\put(43,84.5){$\varepsilon$}

\put(80,9){$\cup$ 3-handle}
\put(83.4,4){4-handle}
\end{overpic}
   \caption{The handle diagram of $S^4(e_{k, n}, 1/q, \varepsilon)$.}
   \label{Theorem 1.1-2}
\end{figure}

\begin{figure}[htbp]
\begin{overpic}[scale=0.5]
{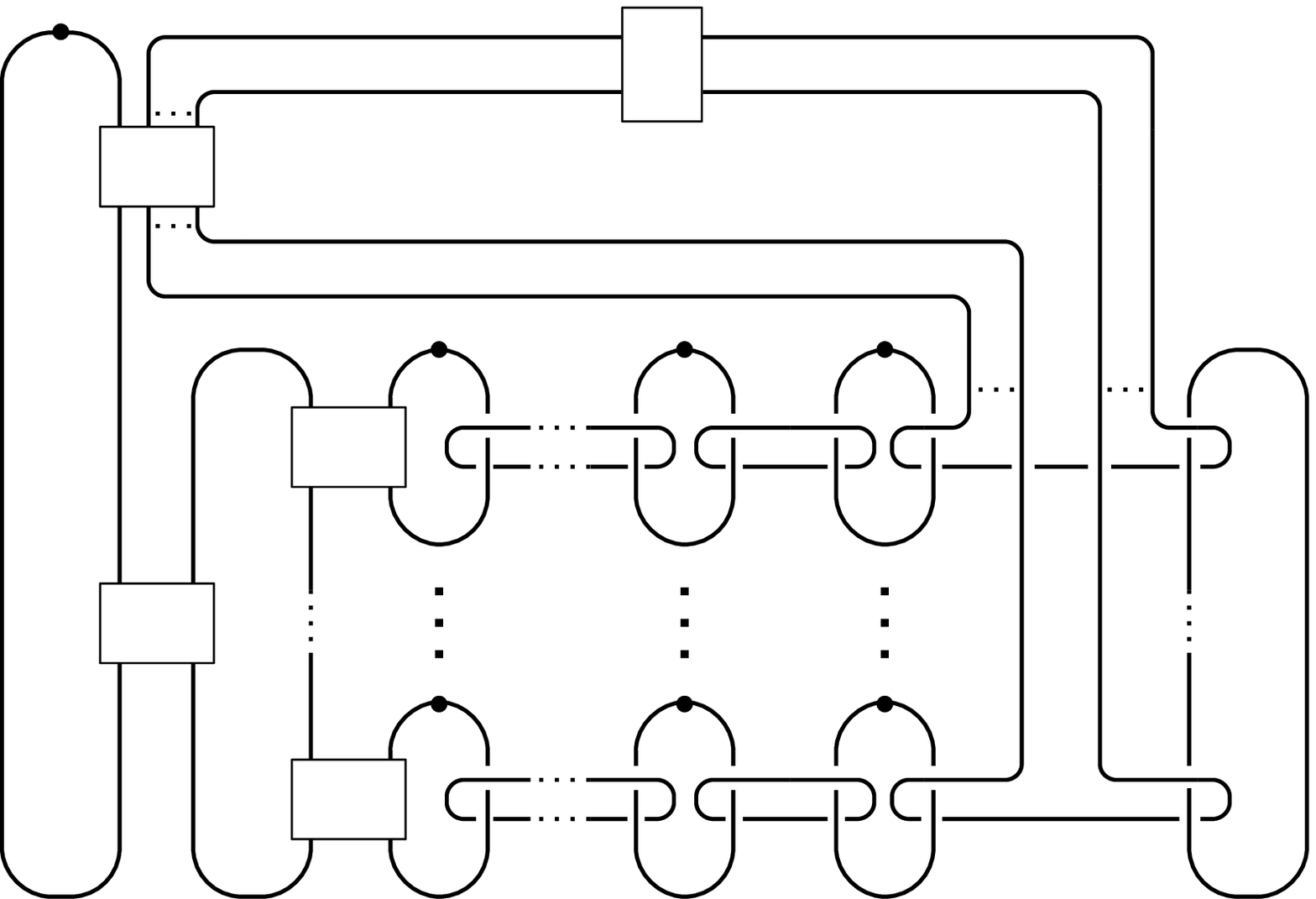}
\put(94,43){$0$}

\put(70,68){$n_1+\varepsilon$}
\put(70,58){$n_s+\varepsilon$}

\put(11,55){$q$}
\put(49.5,62.5){$\varepsilon$}

\put(24,33.5){$m_1$}
\put(24,6.5){$m_s$}

\put(9.5,19.5){$\pm1$}

\put(105,24){$\cup$ 3-handle}
\put(109.2,19){4-handle}
\end{overpic}
   \caption{The handle diagram of $S^4(e_{m, n}, 1/q, \varepsilon)$.}
   \label{Theorem 1.2-2}
\end{figure}


\section{Preliminaries}
\label{pre}
Let $X$ be a $4$-manifold and $E(A)$ the exterior $X-\mathrm{int}\, A$ of a subset $A$ of $X$.
Let $Q_e$ be the image $e(Q)$ of a subset $Q$ of $P$, $e:P\rightarrow X$ an embedding and $g:\partial{P}\rightarrow\partial E(P_e)$ a diffeomorphism. 
We call the curves $l:=S^1\times \{*\}$ and $m:=\partial D^2\times \{*\}$ on $\partial P$ a {\it longitude} and a {\it meridian} of $P$, respectively. 
First, we define the pochette surgery. 

\begin{dfn}[Iwase-Matsumoto \cite{IM}]
\label{pochete surgery}
\rm A {\it pochette surgery} on $X$ is an operation of removing $\mathrm{int}\, P_e$ and gluing in $P$ by $g:\partial{P}\rightarrow\partial E(P_e)$. 
The 4-manifold $E(P_e)\cup_{g}P$ obtained by the pochette surgery on $X$ using $e$ and $g$ is denoted by $X(e, g)$. 
The manifold $X(e, g)$ is also called the pochette surgery on $X$ for $e$ and $g$. 
\end{dfn}

In pochette surgery on a 4-manifold, after attaching $D^2\times S^2$ to $P$ along $g(m)$, the method of attaching $S^1\times D^3$ is unique.
Therefore, when gluing $P$, it is sufficient to consider an identification between neighborhoods of $m$ and $g(m)$ via $g$.

Fix an identification between $\partial P$ and $S^1\times \partial D^3\# \partial D^2\times S^2=S^1\times S^2\#S^1\times S^2$.
The meridian $m$ of $P$ has the natural product framing.
By embedding $e$, we get identification $\iota:\partial E(P_e)\to S^1\times S^2\#S^1\times S^2$.
Then, $S^1\times S^2\# S^1\times S^2$ can be expressed as the 2-component unlink which consists of 2 0-framed knots.
Therefore, $g$ maps the natural framing on $m$ of $\partial P$ to a framing on $g(m)$.
This framing on $g(m)$ is represented by some integer determined by $\iota$.
The pochette can be regarded as $S^1\times D^3$ attaching a 2-handle with the cocore $m$.
Let $g_1, g_2:\partial P\to \partial E(P_e)$ be two gluing maps. 
If $g_1(m)$ and $g_2(m)$ are the same and a difference between the framing on $g_1(m)$ and that of $g_2(m)$ is even, the map $g_1^{-1}\circ g_2|_{N(m)}$ can be extended to the inside of the 2-handle.
Here, $N(A)$ is the open tubular neighborhood for a submanifold $A$ of $P$.
Therefore, when considering the diffeomorphism type of the pochette surgery, we should consider an integer modulo 2 as the framing on $g(m)$.
This framing on $g(m)$ is called a {\it mod 2 framing} and write it as $\varepsilon$.
The mod 2 framing of $g(m)$ for the gluing map $g:\partial P\to \partial E(P_e)$ was first introduced in \cite[First paragraph in p.162]{IM}.

By \cite[Lemma 4]{IM}, the diffeomorphism type of $X(e, g)$ is determined by the embedding $e:P\rightarrow X$, the isotopy class of a simple closed curve $g(m)$ and the mod 2 framing around $g(m)$. 
For orientation preserving self-diffeomorphisms $g,g'$ of $\partial P$, if $g_*([m])$ is equal to $g'_*([m])$, then $g(m)$ is isotopic to $g'(m)$ (see [IM, Lemma 5]).
Hence, the diffeomorphism type of $X(e, g)$ is determined by an embedding $e:P\rightarrow X$, a homology class $g_{*}([m])$ in $H_1(\partial E(P_e))\cong\mathbb{Z}[m]\oplus\mathbb{Z}[l]$ and the mod 2 framing around $g(m)$.

Let $p, q$ be coprime integers and $\varepsilon$ an element of $\{0, 1\}$. 
By \cite[the seventh paragraph in p.\ 163]{IM}, the homology class $p[m]+q[l]\in \mathbb{Z}[m]\oplus\mathbb{Z}[l]$ is determined by $p/q\in \mathbb{Q}\cup\{\infty\}$ up to sign of $p$. 
The next theorem immediately follows from observations here (see [IM, Theorem 2]).

\begin{thm}[Iwase-Matsumoto \cite{IM}]
\label{three conditions 3}
The diffeomorphism type of $X(e, g)$ is determined by the following data: \\
(1)an embedding $e:P\rightarrow X$, \\
(2)a slope p/q, \\
(3)a mod 2 framing around $g(m)$.
\end{thm}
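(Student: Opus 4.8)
The plan is to prove the theorem by assembling the preliminary observations above into a chain of reductions that successively coarsens the data on which the diffeomorphism type of $X(e,g)=E(P_e)\cup_g P$ depends, starting from the full gluing diffeomorphism $g:\partial P\to\partial E(P_e)$ and ending with the triple $(e,p/q,\varepsilon)$. Since $P$ is built from $S^1\times D^3$ by attaching a $2$-handle $D^2\times S^2$ whose cocore is $m$, and since the way of attaching $S^1\times D^3$ after this $2$-handle is unique, the gluing is effectively determined by how $g$ identifies a neighborhood $N(m)$ of $m$ with its image. This is exactly the content of \cite[Lemma 4]{IM}, which I would invoke first to reduce the data to: (1) the embedding $e$, (2) the isotopy class of the attaching circle $g(m)\subset\partial E(P_e)$, and (3) the framing that $g$ induces on $g(m)$.

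Next I would argue that the framing on $g(m)$ matters only modulo $2$. Given two gluing maps $g_1,g_2$ with $g_1(m)=g_2(m)$, the composite $g_1^{-1}\circ g_2|_{N(m)}$ is a self-diffeomorphism of the solid-torus neighborhood of $m$; when the two induced framings differ by an even integer, this self-diffeomorphism extends across the attached $2$-handle $D^2\times S^2$, so $g_1$ and $g_2$ produce diffeomorphic surgeries. Hence only the parity $\varepsilon\in\{0,1\}$ of the framing is genuine data, and item (3) becomes the mod $2$ framing.

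For the isotopy class of $g(m)$ I would pass to homology. Because $g$ is an orientation-preserving diffeomorphism of $\partial P\cong S^1\times S^2\#S^1\times S^2$, the induced map $g_*$ is an automorphism of $H_1(\partial P)$, so $g_*([m])$ is a primitive class. By \cite[Lemma 5]{IM}, two gluing maps with $g_*([m])=g'_*([m])$ have isotopic images $g(m)\simeq g'(m)$, so the isotopy class in item (2) is pinned down by the homology class alone. Identifying $H_1(\partial E(P_e))\cong\mathbb{Z}[m]\oplus\mathbb{Z}[l]$ through $e$, I would write $g_*([m])=p[m]+q[l]$; primitivity forces $\gcd(p,q)=1$, i.e.\ $p,q$ coprime. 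Finally, by the computation recalled in the seventh paragraph of \cite{IM}, the primitive class $p[m]+q[l]$ is recorded up to the sign of $p$ by the slope $p/q\in\mathbb{Q}\cup\{\infty\}$ (with $q=0$ giving $\infty$). Chaining the three reductions, the diffeomorphism type of $X(e,g)$ depends only on $e$, on $p/q$, and on $\varepsilon$, which is the assertion.

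Since the deep topological input, namely Lemmas 4 and 5 of \cite{IM}, is being imported, the real work of this argument is organizational: tracking exactly which piece of $g$ survives each reduction. The step I expect to require the most care is the mod $2$ framing claim, where I must verify that the extension of $g_1^{-1}\circ g_2|_{N(m)}$ across $D^2\times S^2$ exists precisely when the framings differ by an even integer, so that $\varepsilon$ is neither too fine (a full integer) nor too coarse (discarded entirely). The only other subtlety is confirming that $g_*([m])$ is genuinely primitive, which is what guarantees that the slope is a well-defined element of $\mathbb{Q}\cup\{\infty\}$ with coprime representatives rather than an arbitrary pair $(p,q)$.
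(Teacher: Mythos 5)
Your proposal is correct and follows essentially the same route as the paper, which derives the theorem from exactly the chain you describe: \cite[Lemma 4]{IM} reducing the data to $(e,\,g(m),\,\text{framing})$, the extension of $g_1^{-1}\circ g_2|_{N(m)}$ across the $2$-handle showing only the mod $2$ framing matters, \cite[Lemma 5]{IM} reducing the isotopy class of $g(m)$ to the homology class $g_*([m])$, and the identification of $p[m]+q[l]$ with the slope $p/q$ up to sign. The paper simply states that the theorem ``immediately follows from observations here,'' and your write-up is a faithful assembly of those same observations.
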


Let $g_{p/q,\varepsilon}:\partial{P}\rightarrow \partial E(P_e)$ be a diffeomorphism which satisfies $g_{p/q,\varepsilon *}([m])=p[m]+q[l]$ and the mod 2 framing of $g_{p/q,\varepsilon}(m)$ is $\varepsilon$ in $\{0, 1\}$. 
We can define $X(e, p/q, \varepsilon)=X(e, g_{p/q, \varepsilon})$. 
From the construction, any pochette surgery for $e, 1/0, \varepsilon$ is nothing but the Gluck surgery along $S_e$, where $S$ is the subset $\{0\}\times S^2$ of $P$. 

Suppose $X$ is a homology $4$-sphere, and $i_{11}:H_1(\partial P) \rightarrow H_1(E(P_e))$ is the induced homomorphism of the composite map of the gluing map $g:\partial P\rightarrow \partial E(P_e)$ and the inclusion map $i_{\partial E(P_e)}:\partial E(P_e)\hookrightarrow E(P_e)$. 
Okawa calculated some homology groups of the pochette surgery $X(e, p/q, \varepsilon)$: 
\begin{thm}[Okawa {\cite[Theorem 1.1]{O}}]
\label{homologyS4}
If $i_{11}([l])=0$, then $H_1(X(e, p/q, \varepsilon))\cong\mathbb{Z}_{p}$ and $H_2(X(e, p/q, \varepsilon))=0.$
Moreover, if $|p|$ is equal to $1$, then $X(e, p/q, \varepsilon)$ is a homology 4-sphere. 
\end{thm}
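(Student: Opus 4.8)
The plan is to compute the homology of $X':=X(e,p/q,\varepsilon)$ directly from the Mayer--Vietoris sequence of the decomposition $X'=E(P_e)\cup_{g}P$, where $g=g_{p/q,\varepsilon}$ and the overlap is a collar of $\partial P$. First I would record the homology of the three pieces. Since $P\simeq S^1\vee S^2$, we have $H_1(P)=\mathbb{Z}[l]$ and $H_2(P)=\mathbb{Z}[S]$ with $S=\{0\}\times S^2$; since $\partial P\cong(S^1\times S^2)\#(S^1\times S^2)$, we have $H_1(\partial P)=\mathbb{Z}[m]\oplus\mathbb{Z}[l]$ and $H_2(\partial P)=\mathbb{Z}[\sigma]\oplus\mathbb{Z}[\tau]$, the two $H_2$ generators being the boundary $2$-spheres of the $D^2\times S^2$ and the $S^1\times D^3$ summands. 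For $E(P_e)$ I would feed the hypothesis that $X$ is a homology $4$-sphere into the Mayer--Vietoris sequence of the \emph{original} splitting $X=E(P_e)\cup P_e$: as $\widetilde H_*(X)$ is concentrated in degree $4$, the boundary-inclusion maps there are isomorphisms, which gives $H_1(E(P_e))\cong\mathbb{Z}$, $H_2(E(P_e))\cong\mathbb{Z}$, $H_3(E(P_e))=0$, and moreover identifies a generator $u$ of $H_1(E(P_e))$ with the image $i_\partial([m])$ of the meridian and a generator $v$ of $H_2(E(P_e))$ with $i_\partial([\tau])$, where $i_\partial:=(i_{\partial E(P_e)})_*$ denotes the inclusion-induced map.

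For $H_1(X')$ I would assemble the two overlap-inclusion maps. The inclusion into the fresh copy of $P$ gives the standard map $j_*$ with $j_*[m]=0$, $j_*[l]=[l]$, while the inclusion into $E(P_e)$ gives $i_{11}=i_\partial\circ g_*$. Since the slope is $p/q$ we have $g_*[m]=p[m]+q[l]$, hence $i_{11}[m]=p\,i_\partial[m]+q\,i_\partial[l]$; the hypothesis $i_{11}([l])=0$ says exactly that the longitude is null-homologous in $E(P_e)$, so the second summand drops and $i_{11}[m]=p\,u$. Writing $\alpha_1:=(i_{11},-j_*)\colon H_1(\partial P)\to H_1(E(P_e))\oplus H_1(P)$ in the bases $\{[m],[l]\}$ and $\{u,[l]\}$ therefore gives an upper-triangular integer matrix with diagonal $(p,-1)$, so $\operatorname{coker}\alpha_1\cong\mathbb{Z}_p$. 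Because the following map $H_1(X')\to\widetilde H_0(\partial P)=0$ vanishes, $H_1(X')\cong\operatorname{coker}\alpha_1\cong\mathbb{Z}_p$, and $\ker\alpha_1=0$ for $p\neq0$.

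For $H_2(X')$ I would use the segment $H_2(\partial P)\xrightarrow{\alpha_2}H_2(E(P_e))\oplus H_2(P)\to H_2(X')\to\ker\alpha_1=0$, identifying $H_2(X')$ with $\operatorname{coker}\alpha_2$, where $\alpha_2=(i_\partial\circ g_*^{(2)},-j_*)$. The new ingredient is the action $g_*^{(2)}$ on $H_2(\partial P)$: since $g$ is an orientation-preserving self-diffeomorphism of the closed $3$-manifold $\partial P$ it preserves the intersection pairing $H_1(\partial P)\times H_2(\partial P)\to\mathbb{Z}$, so $g_*^{(2)}$ is the inverse transpose of $g_*$. Feeding this in, together with $j_*[\sigma]=[S]$, $j_*[\tau]=0$ and the generator identifications above, reduces $\operatorname{coker}\alpha_2$ to a single cyclic group; using the hypothesis (which also controls the linking class governing $i_\partial[\sigma]$) one reads off the stated value of $H_2(X')$. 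Finally, when $|p|=1$ the group $\mathbb{Z}_p$ is trivial, so $H_1(X')=0=H_2(X')$; Poincar\'e duality and universal coefficients then give $H_3(X')=0$ while $H_0(X')=H_4(X')=\mathbb{Z}$, so $X'$ is a homology $4$-sphere.

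The main obstacle is the $H_2$ step: whereas the $H_1$ statement follows almost formally once the hypothesis kills the longitude, the map $\alpha_2$ genuinely depends on how $g$ acts on $H_2(\partial P)$, and pinning this down --- through its Poincar\'e-dual relation to $g_*$ on $H_1$ and careful tracking of the embedding-dependent generators --- is where the real work lies. (Note that Poincar\'e duality forces the torsion of $H_2(X')$ to be isomorphic to that of $H_1(X')$, so the substantive content of the $H_2$ assertion is the vanishing of the second Betti number, which is precisely what the cokernel computation delivers.)
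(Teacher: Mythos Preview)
Your Mayer--Vietoris approach is essentially the one the paper uses. The paper does not prove Theorem~\ref{homologyS4} directly (it is cited from Okawa) but proves the generalization Proposition~\ref{the homology of pochette surgery}, whose specialization to a homology $4$-sphere is exactly your computation: first determine $H_*(E(P_e))$ from the original splitting (the paper does this via the long exact sequence of the pair $(X,E(P_e))$ plus excision in Lemma~\ref{homology EPe}, which is equivalent to your Mayer--Vietoris for $X=P_e\cup E(P_e)$), then run Mayer--Vietoris for $X'=E(P_e)\cup_g P$.

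One genuine difference: to determine $g_*$ on $H_2(\partial P)$ you use that an orientation-preserving self-diffeomorphism respects the $H_1$--$H_2$ intersection pairing, so $g_*^{(2)}$ is the inverse transpose of $g_*^{(1)}$. The paper instead (Lemma~\ref{i21}) writes $g_{p/q,\varepsilon}$ explicitly as a word in the generators $E_0,\ldots,E_5$ and tracks their action on $[B],[S]$ by hand. Your argument is cleaner and avoids that bookkeeping; the paper's has the virtue of fixing the signs.

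There is, however, a real gap in your $H_2$ step. You assert that after setting up $\alpha_2$ ``one reads off the stated value of $H_2(X')$'', but you never carry out the cokernel computation, and your own Poincar\'e-duality parenthetical shows that the literal claim $H_2(X')=0$ \emph{cannot} hold when $|p|>1$: the torsion of $H_2$ of a closed oriented $4$-manifold must match that of $H_1$, here $\mathbb{Z}_p$. The paper is in fact careful on this point: Proposition~\ref{the homology of pochette surgery} only asserts $H_1\cong\mathbb{Z}_p$ (and $H_0,H_4\cong\mathbb{Z}$), and the $H_2,H_3$ computation is completed only under the additional hypothesis $|p|=1$, which is all that is needed for the homology-sphere conclusion. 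You should do the same: either restrict the $H_2$ argument to $|p|=1$, or compute $\operatorname{coker}\alpha_2$ honestly and observe that it is finite cyclic of order $|p|$, consistent with duality.

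A smaller wrinkle: you read ``$i_{11}([l])=0$'' as ``the longitude $l_e$ is null-homologous in $E(P_e)$'', i.e.\ $i_\partial[l_e]=0$. Since by definition $i_{11}=i_\partial\circ g_*$ and $g_*[l]$ need not be $[l_e]$, this is not literally the stated hypothesis; but it is the reading under which both your argument and the paper's proof of Proposition~\ref{the homology of pochette surgery} go through, so it is almost certainly the intended one.
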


Homology groups of pochette surgeries on any simply-connected closed $4$-manifold will be calculated in Section \ref{Homology of pochette surgery}. 

Recall $P$ can be interpreted as $h^0\cup h^1\cup h^2$. 
We call the core of $(h^1)_e$ a {\it cord}. 
A cord is {\it trivial} if it is boundary parallel. 
Let $S:=\{*\}\times S^2\subset P$. 
Okawa also showed that $S^4(e,1/q,\varepsilon)$ is diffeomorphic to the $4$-sphere if the cord $(h^1)_e$ is trivial in $E((h^0\cup h^2)_e)$ and $S_e$ is a ribbon $2$-knot. 

\begin{thm}[Okawa {\cite[Theorem 1.2]{O}}]
If $(h^1)_e$ is the trivial code and $S_e$ is a ribbon 2-knot, then the manifold $S^4(e, 1/q, \varepsilon)$ is diffeomorphic to the $4$-sphere.
\end{thm}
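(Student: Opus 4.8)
The plan is to treat the two hypotheses separately: first I would use the triviality of the cord to eliminate the slope parameter $q$, reducing to the case $q=0$, and then I would use the ribbon hypothesis to identify the remaining Gluck-type surgery with $S^4$.

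For the first reduction, recall from the discussion following Definition \ref{pochete surgery} that $S^4(e,1/q,\varepsilon)=E(P_e)\cup_{g_{1/q,\varepsilon}}P$ is built by attaching a $2$-handle to $E(P_e)$ along a framed curve in the class $[m]+q[l]$ with mod $2$ framing $\varepsilon$, and then filling in $S^1\times D^3$ in the unique way. Since the cord is boundary-parallel, the longitude $l=S^1\times\{*\}$ bounds an embedded disk $\Delta$ in the exterior $E(P_e)$. I would use $\Delta$ to perform a $q$-fold twist of $E(P_e)$ supported near $\Delta$ — a Rolfsen-type twist in handle-calculus language — chosen so that it changes the longitudinal winding of the reglued $2$-handle and carries the class $[m]+q[l]$ back to $[m]$. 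Regluing through this self-diffeomorphism then gives $S^4(e,1/q,\varepsilon)\cong S^4(e,1/0,\varepsilon')$ for some $\varepsilon'\in\{0,1\}$. The twist also alters the framing of the reglued handle, but since the second step below disposes of both possible values of $\varepsilon'$, its precise value is immaterial.

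For the second reduction, $S^4(e,1/0,\varepsilon')$ is the original regluing when $\varepsilon'=0$ (the even framing), so it is literally $S^4$; and when $\varepsilon'=1$ it is exactly the Gluck surgery along $S_e$, because $X(e,1/0,1)$ is the Gluck surgery. It therefore remains to show that the Gluck surgery along the ribbon $2$-knot $S_e$ is $S^4$. As $S_e$ is ribbon it bounds an immersed $3$-ball in $S^4$ with only ribbon singularities; pushing these singularities into $D^5$ and resolving them yields a smoothly embedded $3$-ball bounded by $S_e$, so $S_e$ is $0$-slice. By the theorem of Melvin cited in the introduction \cite{Me}, the Gluck surgery along a $0$-slice $2$-knot is diffeomorphic to $S^4$, and combining the two reductions proves $S^4(e,1/q,\varepsilon)\cong S^4$.

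The hard part will be the first reduction: producing the twist diffeomorphism rigorously from the boundary-parallel cord and verifying that it intertwines the $[m]+q[l]$ regluing with the $[m]$ regluing uniformly in $q$. In handle-calculus terms this is the claim that, after the dotted circle presenting $(h^1)_e$ has been isotoped to a trivial unknot, the $q$ longitudinal wraps that the slope forces on the reglued $2$-handle can be unwound by sliding across $\Delta$, at the cost only of a controlled change in the mod $2$ framing; checking that the disk $\Delta$ and the meridian $m$ meet as needed for this to work is where the care lies. The ribbon step, by contrast, is essentially a citation once $0$-sliceness of $S_e$ is recorded.
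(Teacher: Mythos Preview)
Your proposal is correct and is precisely the elaboration the paper's one-line proof invites: the paper simply writes ``This is a variation of \cite[Exercise 6.2.11(b)]{GS},'' that exercise being the handle-calculus proof that Gluck surgery on a ribbon $2$-knot gives back $S^4$, and your two-step reduction (kill $q$ using the boundary-parallel cord, then invoke the ribbon Gluck result) is exactly the intended ``variation.''

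One small remark on routes: for the second step you pass through ``ribbon $\Rightarrow$ $0$-slice'' and then cite Melvin \cite{Me}, whereas the exercise the paper points to handles the ribbon case directly by drawing the ribbon presentation and performing handle moves; either way closes the argument. For the first step, your later handle-calculus phrasing---viewing the disk $\Delta$ bounded by $l_e$ as the cocore of a $2$-handle in $E(P_e)$ and sliding the reglued $2$-handle over it $q$ times to strip off the longitudinal wraps---is the clean formulation, and is slightly more accurate than the earlier ``Rolfsen twist supported near $\Delta$'' language (a diffeomorphism literally supported in a tubular neighborhood of $\Delta$ would act trivially on $[m_e]$, since $m_e$ can be pushed off that neighborhood in $\partial P$). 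The verification you flag---that $l_e$ really does bound such an embedded $\Delta$ in $E(P_e)$ once the cord is boundary-parallel in $E(S_e)$---is indeed where the care lies, and is the content Okawa supplies.
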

\begin{proof}
This is a variation of \cite[Exercise 6.2.11(b)]{GS}. 
\end{proof}


\section{Handle diagram for pochette surgery}
\label{Handle diagram for pochette surgery}

In this section we give a construction of handle diagrams for pochette surgeries under special conditions. 
Let $X$ be a $4$-manifold and $e:P\rightarrow X$ an embedding from a pochette $P$ into $X$. 
Let $p,q$ be coprime integers and $\varepsilon$ an element of $\{0,1\}$. 
Suppose that the diagram depicted in Figure \ref{Proposition 3.1-1} is a part of a handle diagram for $X$, 
where all the curves partially drawn in Figure \ref{Proposition 3.1-1} are framed knots, and any framed knot entwined with the dotted circle in Figure \ref{Proposition 3.1-1} has a $0$-framed meridian.
The pochette $P_e$ consists of the $0$-handle, the $1$-handle presented by the leftmost dotted circle, and the $2$-handle presented by the rightmost $0$-framed unknot in Figure \ref{Proposition 3.1-1}.

\begin{prop}
\label{recipe of pochette surgery}
A handle diagram for $X(e,p/q,\varepsilon)$ is depicted in Figure \ref{Proposition 3.1-2}(A) if $p/q$ is not equal to $0/1$, and that for $X(e,0/1,\varepsilon)$ is depicted in Figure \ref{Proposition 3.1-2}(B). 
\begin{figure}[htbp]
   \begin{overpic}[scale=0.5]
   {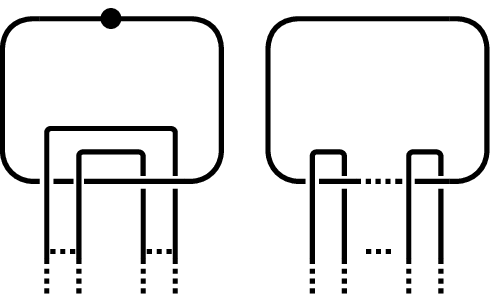}
   \put(73,44){$0$}
   \end{overpic}
   \caption{A part of the handle diagram of $X$.}
   \label{Proposition 3.1-1}
    \begin{tabular}{cc}
      \begin{minipage}[t]{0.45\hsize}
        \centering
        \begin{overpic}[scale=0.5]
        {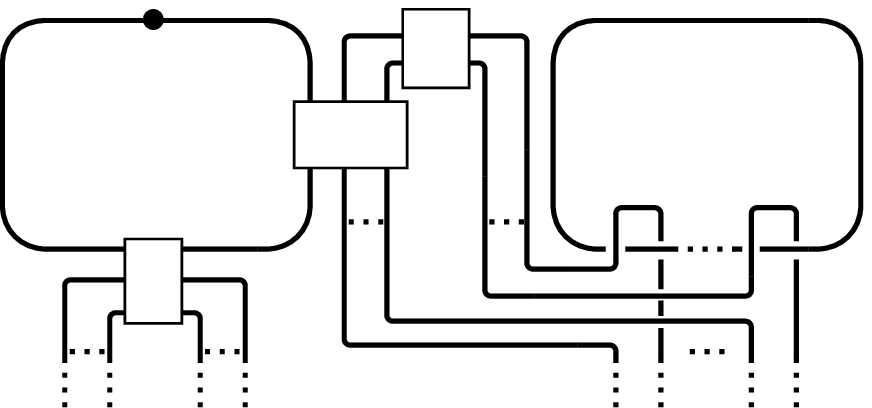}
        \put(80,38){$0$}

        \put(49,40){$\varepsilon$}
        \put(39,30){$q$}
        \put(16,14){$p$}
        \end{overpic}
        \subcaption{$p/q\neq 0/1$}
      \end{minipage} 
      \begin{minipage}[t]{0.45\hsize}
        \centering
        \begin{overpic}[scale=0.5]
        {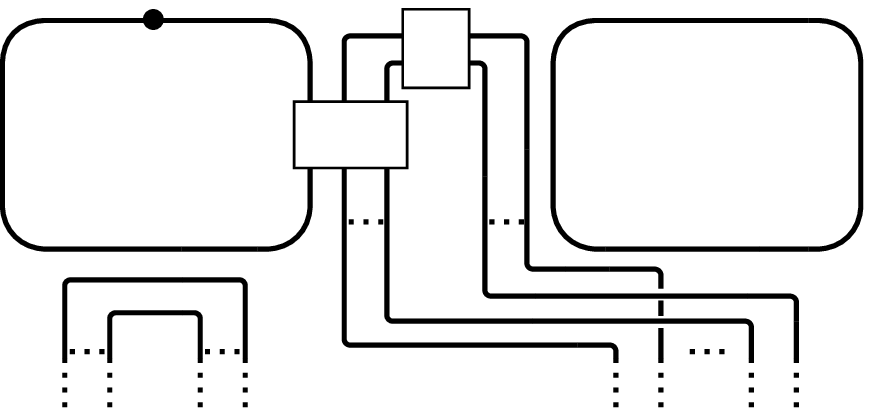}
        \put(80,38){$0$}

        \put(49,40){$\varepsilon$}
        \put(39,29){$1$}
        \end{overpic}
        \subcaption{$p/q=0/1$}
      \end{minipage}
    \end{tabular}
     \caption{The handle diagram of $X(e, p/q, \varepsilon)$.}
     \label{Proposition 3.1-2}
\end{figure}
\end{prop}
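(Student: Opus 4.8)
The plan is to read the surgery directly off the handle picture of $X$ by turning the reglued pochette upside down. Writing $P=h^0\cup h^1\cup h^2$ as in the statement, gluing $P$ back onto the exterior $E(P_e)$ along all of $\partial P$ is the same as attaching the dual handles in increasing index: a $2$-handle $\bar h^2$ (dual to $h^2$) along the belt circle of $h^2$, then a $3$-handle $\bar h^3$ (dual to $h^1$), then a $4$-handle $\bar h^4$ (dual to $h^0$). By Definition \ref{pochete surgery} and the remarks preceding Theorem \ref{three conditions 3}, the attaching circle of $\bar h^2$ is precisely $g(m)$, which for $g=g_{p/q,\varepsilon}$ represents the class $p[m]+q[l]\in H_1(\partial E(P_e))$ and carries framing $\varepsilon$ modulo $2$; the $3$- and $4$-handle are then attached in the only possible way. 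Thus $X(e,p/q,\varepsilon)$ is obtained from Figure \ref{Proposition 3.1-1} by deleting the $2$-handle $U$ of $P_e$ (the rightmost $0$-framed unknot), replacing the $1$-handle of $P_e$ (the leftmost dotted circle) by a $3$-handle, attaching a new $2$-handle along $g(m)$, and adding a $4$-handle.

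First I would locate the two reference curves in Figure \ref{Proposition 3.1-1}. The meridian $m$ is the belt circle of $U$, i.e.\ a $0$-framed unknot linking $U$ once, and the longitude $l$ is the loop that runs once through the leftmost dotted circle and is otherwise standard. The hypothesis that every framed knot entwined with the dotted circle carries a $0$-framed meridian is exactly what makes these meridians available as slide guides: it lets copies of $l$ be pushed across the $1$-handle and, dually, legitimizes the attachment of $\bar h^3$, all without altering the diffeomorphism type.

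Next I would draw $g(m)$ as an explicit framed knot and normalize it. Since $\gcd(p,q)=1$, the class $p[m]+q[l]$ is represented by an embedded curve obtained by banding together $p$ parallel meridians and $q$ parallel longitudes, with its framing adjusted to be $\varepsilon$ modulo $2$. I would then reduce this configuration by repeated handle slides over the $0$-framed meridians furnished by the hypothesis, together with slam-dunk and Rolfsen moves, trading the $q$-fold passage through the dotted circle for the chain carrying the integer labels $q$ and $p$ and the $\varepsilon$-framed component of Figure \ref{Proposition 3.1-2}(A). In the degenerate case $p/q=0/1$ one has $[g(m)]=[l]$, so the curve runs once through the $1$-handle with no meridional part, and the same reduction collapses the chain to the single $1$-framed component of Figure \ref{Proposition 3.1-2}(B); in both cases the dual $3$- and $4$-handle are recorded by the standard convention.

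The main obstacle I expect is bookkeeping rather than anything conceptual: one must check that the abstract gluing data --- a homology class $p[m]+q[l]$ together with only a mod $2$ framing --- is converted faithfully into the explicit integer framings $p$, $q$ and the $\{0,1\}$-framing $\varepsilon$ of Figure \ref{Proposition 3.1-2}, and in particular that each handle slide changes the framings exactly as claimed and that every slide is legitimate (this is where the $0$-framed-meridian hypothesis is essential). Verifying that the generic and degenerate slopes assemble into precisely Figures \ref{Proposition 3.1-2}(A) and (B), with the correct placement of the $3$- and $4$-handle relative to the surviving part of the diagram, is the step that will demand the most care.
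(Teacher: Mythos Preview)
Your dual-handle description of the surgery is correct in principle: $X(e,g)=E(P_e)\cup_g P$ can indeed be built by attaching to $E(P_e)$ a $2$-handle along $g(m)$ with mod~$2$ framing $\varepsilon$, then a $3$-handle and a $4$-handle. The gap is in how you propose to extract a Kirby diagram from this. In Figure~\ref{Proposition 3.1-1} the pochette $P_e=h^0\cup h^1\cup h^2$ occupies the \emph{bottom} of the handle structure --- the $0$-handle of $X$ is the $0$-handle of $P_e$ --- and the partially drawn framed knots are $2$-handles of $E(P_e)$ attached to $\partial(h^0\cup h^1)$. Thus $E(P_e)$ inherits from this picture only a \emph{relative} handle decomposition built on $\partial P_e$, not an absolute one with its own $0$-handle; you cannot ``delete $U$ and replace the dotted circle by a $3$-handle'' and be left with a legitimate diagram of $E(P_e)$, because the remaining $2$-handles would have nowhere to attach. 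To run your scheme honestly you would first have to turn all of Figure~\ref{Proposition 3.1-1} upside down, which you never do. Note also that in the target Figure~\ref{Proposition 3.1-2} both the dotted circle and the $0$-framed unknot survive, so nothing has been deleted.

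The paper's proof takes the opposite viewpoint: it keeps $P$ at the bottom and realises the regluing as an explicit self-diffeomorphism $E_{p/q,\varepsilon}$ of $\partial P$. The heart of the argument --- entirely absent from your sketch --- is the construction of $E_{p/q,\varepsilon}$ as a specific word in six elementary maps $E_0,\dots,E_5$ (a Rolfsen twist, four concrete handle slides, and a reversal of $m$), the word being dictated by the continued-fraction expansion of $|p|/|q|$ together with the signs of $p$ and $q$. One checks directly that $E_{p/q,\varepsilon *}([m])=p[m]+q[l]$, so $g_{p/q,\varepsilon}:=e\circ E_{p/q,\varepsilon}$ is an admissible gluing map, and the curve $g_{p/q,\varepsilon}(m)$ can then be drawn explicitly in the picture. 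The $0$-framed-meridian hypothesis is used to slide strands through the dotted circle, and for $pq\neq 0$ a $2$-handle/$3$-handle pair is created and later cancelled to straighten everything into the chain of Figure~\ref{Proposition 3.1-2}. Your ``band together $p$ meridians and $q$ longitudes, then slam-dunk and Rolfsen-twist'' gestures toward this simplification but does not supply the continued-fraction mechanism that actually makes it go through.
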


\begin{proof}
Here we will consider the case where only a framed knot is entwined with the 0-framed knot on the right side exactly once. The case where framed knots are entwined with the 0-framed knot on the right side can be proved in the same way. 
If $|p|$ and $|q|$ are coprime positive integers, then there exist a positive integer $n$, a nonnegative integer $a_0$ and positive integers $a_1,\ldots, a_n$ such that 
\[\frac{|p|}{|q|}=a_0+\cfrac{1}{a_1+\cfrac{1}{\ddots+\cfrac{1}{a_n}}}.\]

We define the diffeomorphism $E_0, E_1, E_2, E_3, E_4$ and $E_5:\partial P \rightarrow\partial P$ as the $1$-Rolfsen twist for the leftmost $\langle0\rangle$-framed knot, the handle slide in Figure \ref{E1}, \ref{E2}, \ref{E3} and \ref{E4}, the operation changing the direction of the meridian $m$, respectively. 

\begin{figure}[htbp]
\begin{minipage}[b]{0.45\linewidth}
\centering
\begin{overpic}[scale=0.5]
{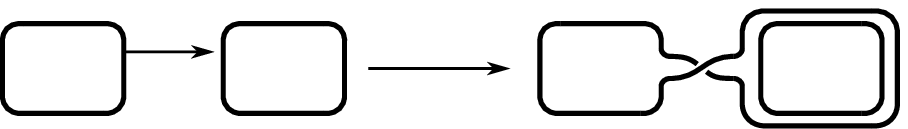}
\put(02,15){$\langle0\rangle$}
\put(27,15){$\langle0\rangle$}

\put(41.5,9){slide}

\put(62,15){$\langle0\rangle$}
\put(86,5){$\langle0\rangle$}
\end{overpic}
\caption{$E_1$.}
\label{E1}
\end{minipage}
\begin{minipage}[b]{0.45\linewidth}
\begin{overpic}[scale=0.5]
{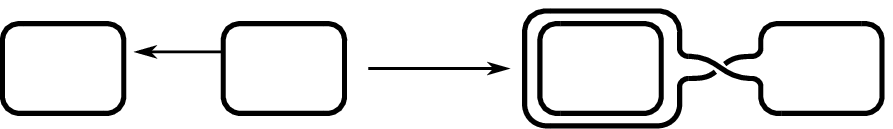}
\put(02,15){$\langle0\rangle$}
\put(27,15){$\langle0\rangle$}

\put(41.5,9){slide}

\put(63,5){$\langle0\rangle$}
\put(88,15){$\langle0\rangle$}
\end{overpic}
\caption{$E_2$.}
\label{E2}
\end{minipage}
\end{figure}
\begin{figure}[htbp]
\begin{minipage}[b]{0.45\linewidth}
\centering
\begin{overpic}[scale=0.5]
{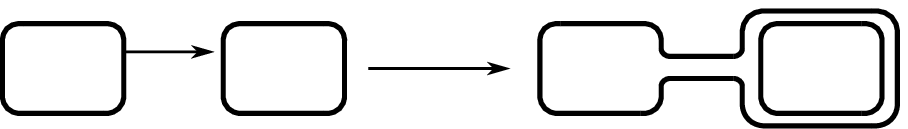}
\put(02,15){$\langle0\rangle$}
\put(27,15){$\langle0\rangle$}

\put(41.5,9){slide}

\put(62,15){$\langle0\rangle$}
\put(86,5){$\langle0\rangle$}
\end{overpic}
\caption{$E_3$.}
\label{E3}
\end{minipage}
\begin{minipage}[b]{0.45\linewidth}
\begin{overpic}[scale=0.5]
{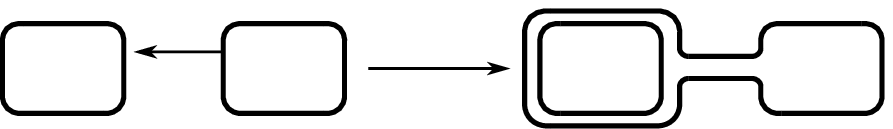}
\put(02,15){$\langle0\rangle$}
\put(27,15){$\langle0\rangle$}

\put(41.5,9){slide}

\put(63,5){$\langle0\rangle$}
\put(88,15){$\langle0\rangle$}
\end{overpic}
\caption{$E_4$.}
\label{E4}
\end{minipage}
\end{figure}

Then we have 
\[E_{i*}([m])=
\begin{cases}
   [m]&(i=0, 2, 4), \\
   [m]+[l]&(i=1), \\
   [m]-[l]&(i=3), \\
   -[m]&(i=5),
\end{cases}
E_{i*}([l])=
\begin{cases}
   [l]&(i=0, 1, 3, 5), \\
   [m]+[l]&(i=2), \\
   -[m]+[l]&(i=4).
\end{cases}
\]
We define $E_{p/q, \varepsilon}$ to be  
\[\begin{cases}
E_0^{\varepsilon}&(p=1, q=0), \\
E_4\circ E_1\circ E_0^{\varepsilon}&(p=0, q=1), \\
E_5\circ E_0^{\varepsilon}&(p=-1, q=0), \\
E_4\circ E_1\circ E_5\circ E_0^{\varepsilon}&(p=0, q=-1), \\
E_2^{a_0}\circ E_1^{a_1}\circ E_2^{a_2}\circ\cdots\circ E_1^{a_{n-1}}\circ E_2^{a_{n}-1}\circ E_1\circ E_0^{\varepsilon}&(p>0, q>0, n\in 2\mathbb{N}), \\
E_2^{a_0}\circ E_1^{a_1}\circ E_2^{a_2}\circ\cdots\circ E_2^{a_{n-1}}\circ E_1^{a_{n}}\circ E_0^{\varepsilon}&(p>0, q>0, n\in 2\mathbb{N}-1), \\
E_4^{a_0}\circ E_3^{a_1}\circ E_4^{a_2}\circ\cdots\circ E_3^{a_{n-1}}\circ E_4^{a_{n}-1}\circ E_3\circ E_0^{\varepsilon}&(p>0, q<0, n\in 2\mathbb{N}), \\
E_4^{a_0}\circ E_3^{a_1}\circ E_4^{a_2}\circ\cdots\circ E_4^{a_{n-1}}\circ E_3^{a_{n}}\circ E_0^{\varepsilon}&(p>0, q<0, n\in 2\mathbb{N}-1), \\
E_4^{a_0}\circ E_3^{a_1}\circ E_4^{a_2}\circ\cdots\circ E_3^{a_{n-1}}\circ E_4^{a_{n}-1}\circ E_3\circ E_0^{\varepsilon}&(p<0, q>0, n\in 2\mathbb{N}), \\
E_4^{a_0}\circ E_3^{a_1}\circ E_4^{a_2}\circ\cdots\circ E_4^{a_{n-1}}\circ E_3^{a_{n}}\circ E_5\circ E_0^{\varepsilon}&(p<0, q>0, n\in 2\mathbb{N}-1), \\
E_2^{a_0}\circ E_1^{a_1}\circ E_2^{a_2}\circ\cdots\circ E_1^{a_{n-1}}\circ E_2^{a_{n}-1}\circ E_1\circ E_5\circ E_0^{\varepsilon}&(p<0, q<0, n\in 2\mathbb{N}), \\
E_2^{a_0}\circ E_1^{a_1}\circ E_2^{a_2}\circ\cdots\circ E_2^{a_{n-1}}\circ E_1^{a_{n}}\circ E_5\circ E_0^{\varepsilon}&(p<0, q<0, n\in 2\mathbb{N}-1)
\end{cases}\]
and $g_{p/q, \varepsilon}=e\circ E_{p/q, \varepsilon}$. 
Then we have $E_{p/q, \varepsilon\ast}([m])=p[m]+q[l]$ for any $p/q\in\mathbb{Q}\cup\{\infty\}$ and $\varepsilon\in\{0,1\}$. 
By Theorem \ref{three conditions 3}, the pochette surgery $X(e, p/q, \varepsilon)$ is diffeomorphic to $X(e, g_{p/q, \varepsilon})$ for any $p/q\in\mathbb{Q}\cup\{\infty\}$ and $\varepsilon\in\{0,1\}$. 
A part of a handle diagram of $X$ is depicted in Figure \ref{Proposition 3.1proof1}. 
By several handle slides on the 0-framed meridians of the framed knots entwined with the dotted circle, we obtain a part of a handle diagram of $X(e, p/q, \varepsilon)$ depicted in Figure \ref{Proposition 3.1proof2}. 
Concretely, the homotopy class of $g_{p/q, \varepsilon}(m)$ is the natural lift defined in \cite{IM}: 
\[\begin{cases}
m'^pl'^q&(pq=0), \\
\prod_{k=1}^{|p|}l'^{q/|q|(\lfloor k|q|/|p|\rfloor-\lfloor(k-1)|q|/|p|\rfloor)}m'^{p/|p|}&(pq\neq0). 
\end{cases}\]
Here, $l',m'$ are the images on $\pi_1(\partial E(P_e))$ of based, oriented, longitude and meridian in $\partial P$ via $e$.

If $pq = 0$, then we reach the desired result.

If $pq\neq0$, we obtain Figure \ref{Proposition 3.1proof3} by creating a $2$-handle/$3$-handle pair in Figure \ref{Proposition 3.1proof2}.
By the handle slide in Figure \ref{Proposition 3.1proof3} and several handle slides on the 0-framed meridians of the framed knots entwined with the dotted circle, we obtain the handle diagram depicted in Figure \ref{Proposition 3.1proof4}.
By the handle slides between the leftmost 0-framed knot and the rightmost 0-framed knot in Figure \ref{Proposition 3.1proof4}:
\[\begin{cases}
E_0^{\varepsilon}\circ E_{p/q,\varepsilon}^{-1}&(p>0), \\
E_5\circ E_0^{\varepsilon}\circ E_{p/q,\varepsilon}^{-1}&(p<0),
\end{cases}\]
we obtain the handle diagram depicted in Figure \ref{Proposition 3.1proof5}.
By the handle slide on a 0-framed meridian in Figure \ref{Proposition 3.1proof5}, the other 0-framed meridian in Figure \ref{Proposition 3.1proof5} can be changed to 0-framed unlink. 
Changing the self-intersection of the framed knot in Figure \ref{Proposition 3.1proof5} by several handle slides on the 0-framed meridian and canceling the $2$-handle/$3$-handle pair, we obtain the handle diagram depicted in Figure \ref{Proposition 3.1proof6}. 
Therefore, we also obtain the conclusion in the case of $pq\neq0$. 
\end{proof}

\begin{figure}[htbp]
  \begin{minipage}[b]{0.45\linewidth}
    \centering
     \begin{overpic}[scale=0.5]
     {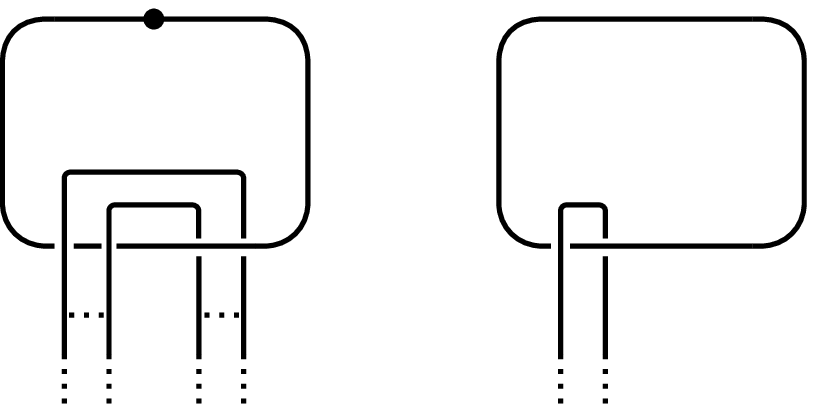}
     \put(78,40){$0$}
     \end{overpic}
    \caption{}
    \label{Proposition 3.1proof1}
  \end{minipage}
  \begin{minipage}[b]{0.45\linewidth}
    \centering
     \begin{overpic}[scale=0.5]
     {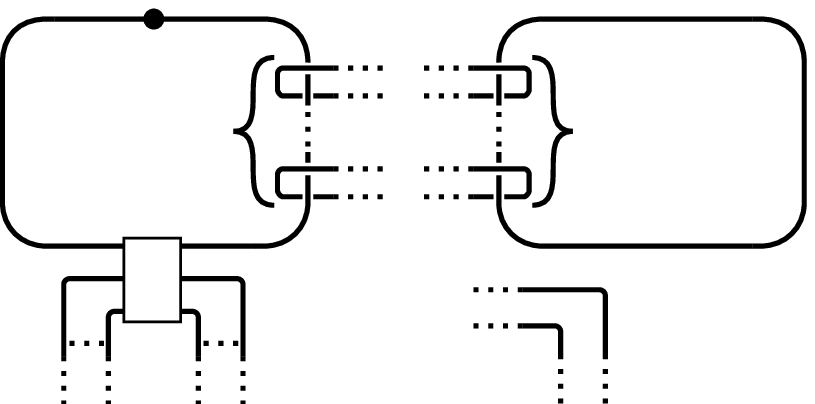}
     \put(17,14){$p$}
     \put(20,32){$|q|$}
     \put(71.5,32){$|p|$}

     \put(78,40){$0$}
     \end{overpic}
    \caption{}
    \label{Proposition 3.1proof2}
  \end{minipage}
\end{figure}
\begin{figure}[htbp]
  \begin{minipage}[b]{0.45\linewidth}
    \centering
     \begin{overpic}[scale=0.5]
     {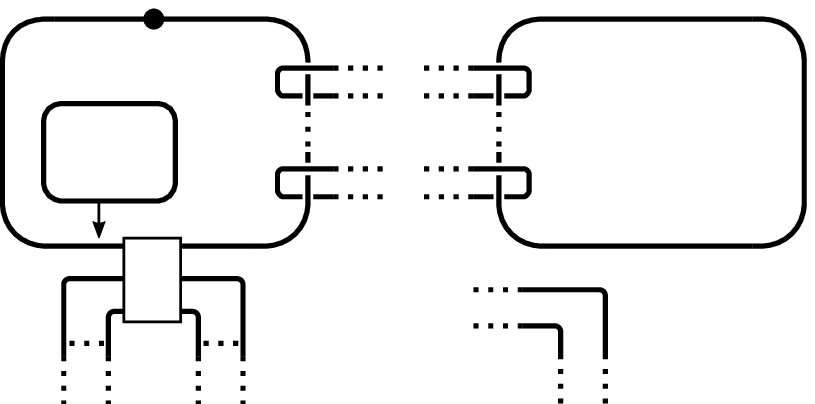}
     \put(11.5,39){$0$}
     \put(17,14){$p$}

     \put(78,40){$0$}
     \end{overpic}
    \caption{}
    \label{Proposition 3.1proof3}
  \end{minipage}
  \begin{minipage}[b]{0.45\linewidth}
    \centering
     \begin{overpic}[scale=0.5]
     {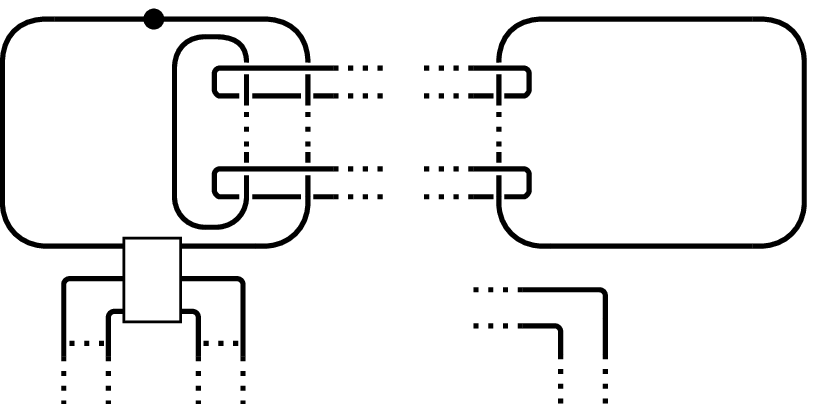}
     \put(17,14){$p$}
     \put(15,32){$0$}

     \put(78,40){$0$}
     \end{overpic}
    \caption{}
    \label{Proposition 3.1proof4}
  \end{minipage}
\end{figure}
\begin{figure}[htbp]
  \begin{minipage}[b]{0.45\linewidth}
    \centering
     \begin{overpic}[scale=0.5]
     {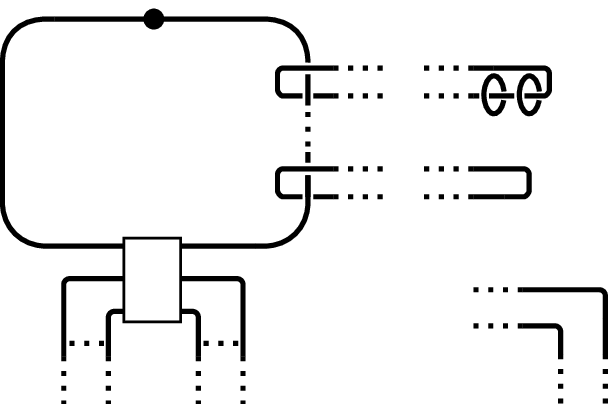}
     \put(78,58){$0$}
     \put(85,58){$0$}
     \put(22,18){$p$}
     \end{overpic}
    \caption{}
    \label{Proposition 3.1proof5}
  \end{minipage}
  \begin{minipage}[b]{0.45\linewidth}
    \centering
     \begin{overpic}[scale=0.5]
     {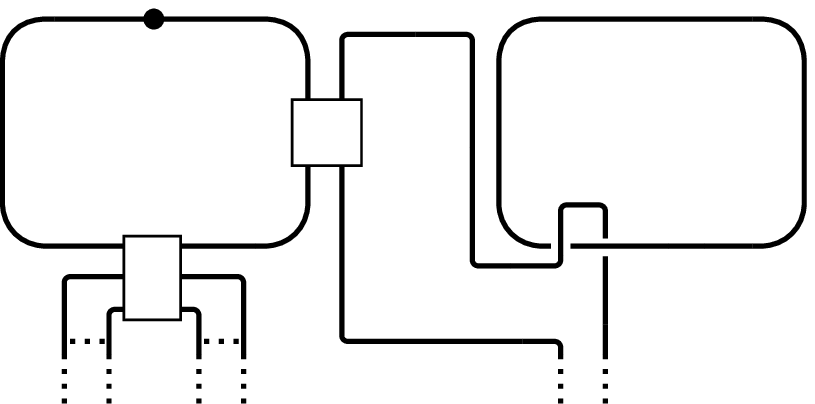}
     \put(17,14){$p$}
     \put(38.5,32.5){$q$}

     \put(78,40){$0$}
     \end{overpic}
    \caption{}
    \label{Proposition 3.1proof6}
  \end{minipage}
\end{figure}

\newpage
\begin{rmk}
\label{a generalization of prop}
\rm If $p=1$, Proposition \ref{recipe of pochette surgery} holds even without the 0-framed meridians, so any pochette surgery $X(e, 1/q, \varepsilon)$ is given by Figure \ref{Proposition 3.1-2}(A). 
\end{rmk}

\begin{rmk}
\rm Proposition \ref{recipe of pochette surgery} is a generalization of \cite[Theorem 1, 2]{K} and \cite[Theorem 1.1, 1.2]{Mu}. 
The curve $g_{p/q,\varepsilon}(m)$ in $\partial P$ is depicted in Figure \ref{remark1} in the case of $|p|=1, q=0$, in Figure \ref{remark2} in the case of $p=0, |q|=1$, in Figure \ref{remark3} in the case of $|q|>|p|>0, pq>0$, in Figure \ref{remark4} in the case of $|q|>|p|>0, pq<0$, in Figure \ref{remark5} in the case of $|p|>|q|>0, pq>0$, in Figure \ref{remark6} in the case of $|p|>|q|>0, pq<0$. 
We perform a Rolfsen twist just before or just after performing each handle slide $E_1,E_2,E_3,E_4$ by using either of two $\langle 0\rangle$-framed knots in $\partial P_e$.
Then we can obtain the curves depicted in Figure \ref{remark3}--\ref{remark6}.
The curve depicted in Figure \ref{remark3} was first discovered by Murase \cite{Mu}.

\begin{figure}[htbp]
  \begin{minipage}[b]{0.45\linewidth}
    \centering
     \begin{overpic}[scale=0.5]
     {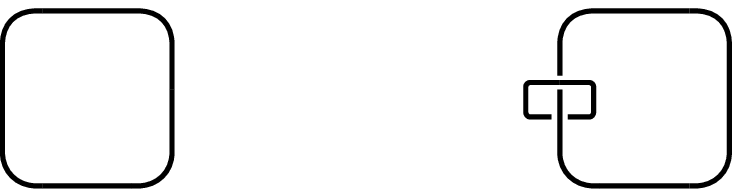}
     \put(6,15){$\langle0\rangle$}
     \put(82,15){$\langle0\rangle$}
     \end{overpic}
    \caption{}
    \label{remark1}
  \end{minipage}
  \begin{minipage}[b]{0.45\linewidth}
    \centering
     \begin{overpic}[scale=0.5]
     {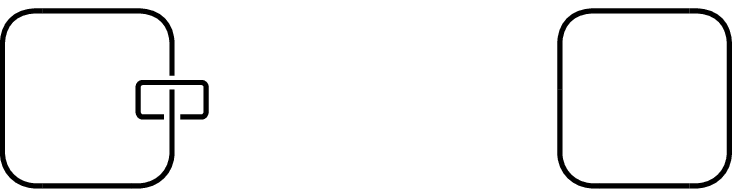}
     \put(6,15){$\langle0\rangle$}
     \put(82,15){$\langle0\rangle$}
     \end{overpic}
    \caption{}
    \label{remark2}
  \end{minipage}
\end{figure}
\begin{figure}[htbp]
  \begin{minipage}[b]{0.45\linewidth}
    \centering
     \begin{overpic}[scale=0.5]
     {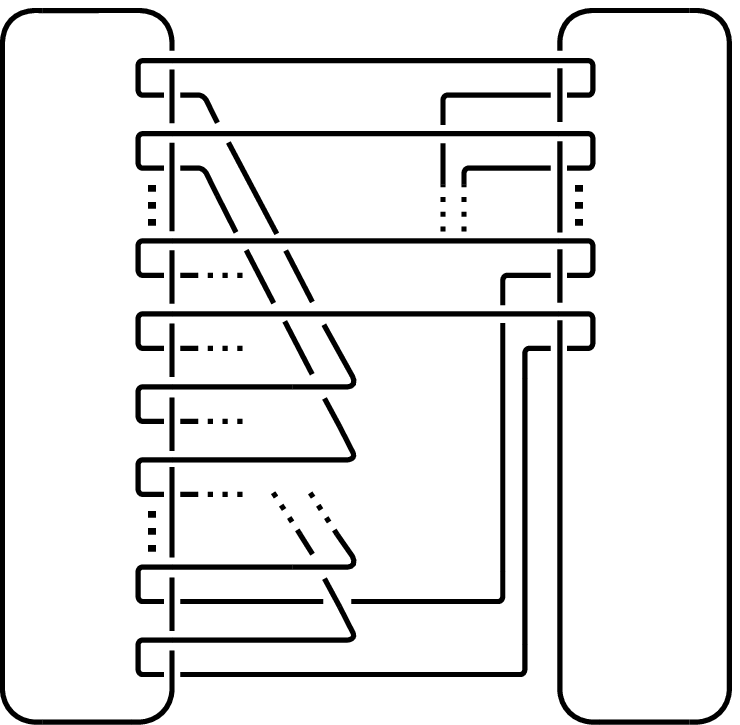}
     \put(6,88){$\langle0\rangle$}
     \put(82,88){$\langle0\rangle$}
     \end{overpic}
    \caption{}
    \label{remark3}
  \end{minipage}
  \begin{minipage}[b]{0.45\linewidth}
    \centering
     \begin{overpic}[scale=0.5]
     {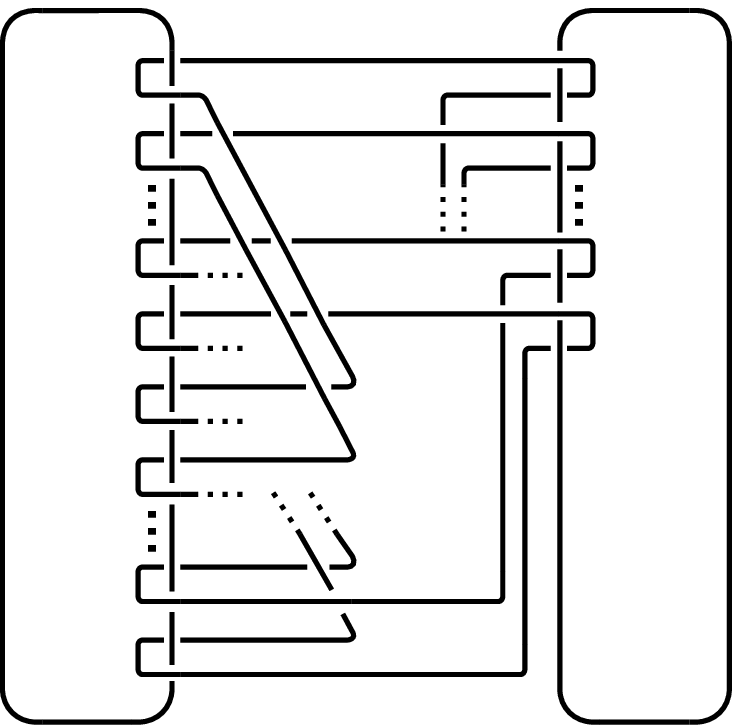}
     \put(6,88){$\langle0\rangle$}
     \put(82,88){$\langle0\rangle$}
     \end{overpic}
    \caption{}
    \label{remark4}
  \end{minipage}
\end{figure}
\begin{figure}[htbp]
  \begin{minipage}[b]{0.45\linewidth}
    \centering
     \begin{overpic}[scale=0.5]
     {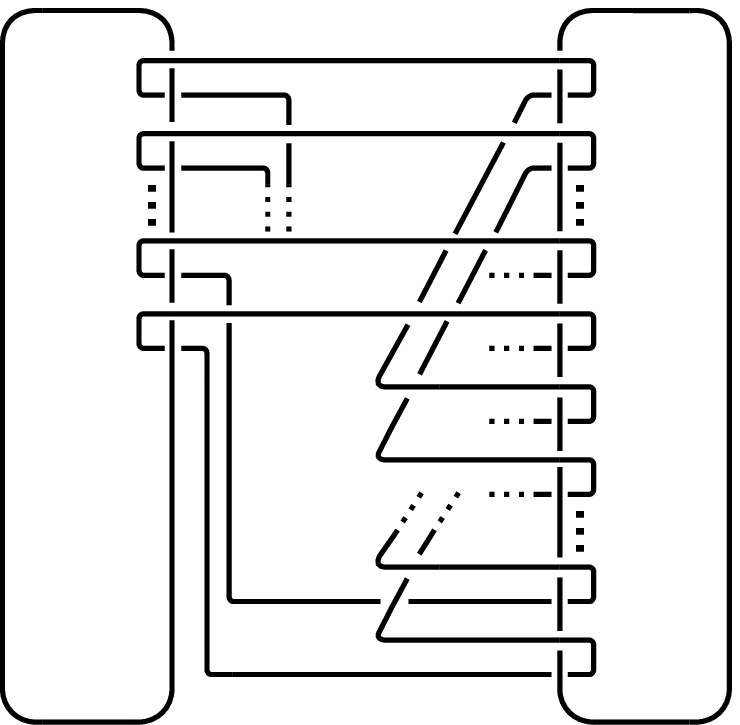}
     \put(6,88){$\langle0\rangle$}
     \put(82,88){$\langle0\rangle$}
     \end{overpic}
    \caption{}
    \label{remark5}
  \end{minipage}
  \begin{minipage}[b]{0.45\linewidth}
    \centering
     \begin{overpic}[scale=0.5]
     {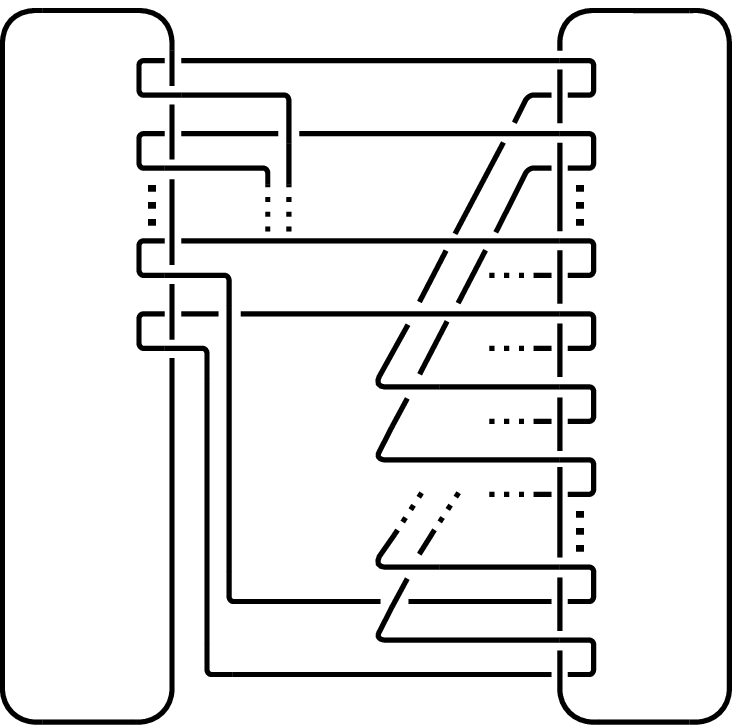}
     \put(6,88){$\langle0\rangle$}
     \put(82,88){$\langle0\rangle$}
     \end{overpic}
    \caption{}
    \label{remark6}
  \end{minipage}
\end{figure}
\end{rmk}

\newpage
\section{Homology of pochette surgery}
\label{Homology of pochette surgery}

Let $p,q$ be coprime integers and $\varepsilon$ an element of $\{0,1\}$.
Let $B:=\{*\}\times \partial D^3\subset P$. 
Let $X$ be a simply-connected closed $4$-manifold and $e:P\rightarrow X$ an embedding from a pochette $P$ into $X$. 
Here, we prove lemmas needed later. 

\begin{lem}
\label{homology EPe}
If the homomorphism $t_2:H_2(X)\rightarrow H_2(X,E(P_e))$ induced by the inclusion map $(X,\varnothing)\rightarrow (X,E(P_e))$ is a zero map, the homology groups of the exterior $E(P_e)$ of $P_e$ are calculated as follows:
\[H_n(E(P_e))=
\begin{cases}
   {\mathbb Z}[x_e]&(n=0), \\
   {\mathbb Z}[m_e]&(n=1), \\
   {\mathbb Z}[B_e]\oplus H_2(X)&(n=2), \\
   0&(otherwise). 
\end{cases}\]
Here $x$ is a point in $\partial P$.
\end{lem}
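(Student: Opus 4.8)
The plan is to compute $H_*(E(P_e))$ from the long exact sequence of the pair $(X,E(P_e))$, after first reducing the relative groups to something concrete by excision. Writing $X = P_e\cup_{\partial P_e} E(P_e)$ and excising the interior of $E(P_e)$ across a collar of the separating hypersurface $\partial P_e$, we get
\[
H_*(X,E(P_e))\;\cong\;H_*(P_e,\partial P_e)\;\cong\;H_*(P,\partial P).
\]
So the whole problem reduces to knowing $H_*(P,\partial P)$ together with the maps $t_n\colon H_n(X)\to H_n(X,E(P_e))$.

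Next I would compute $H_*(P,\partial P)$. Since $P=S^1\times D^3\natural D^2\times S^2$ deformation retracts onto $S^1\vee S^2$, we have $H^0(P)\cong H^1(P)\cong H^2(P)\cong\mathbb{Z}$ and $H^k(P)=0$ for $k\ge 3$. Poincar\'e--Lefschetz duality for the compact oriented $4$-manifold $P$ then yields $H_k(P,\partial P)\cong H^{4-k}(P)$, that is
\[
H_k(P,\partial P)\cong
\begin{cases}
\mathbb{Z} & (k=2,3,4),\\
0 & (k=0,1).
\end{cases}
\]
I would then pin down geometric generators: $H_4(P,\partial P)$ is the relative fundamental class; $H_3(P,\partial P)$ is represented by the disk $\{*\}\times D^3$ in the $S^1\times D^3$ factor, whose boundary is $B$; and $H_2(P,\partial P)$ is represented by the core disk $D^2\times\{*\}$ of the $D^2\times S^2$ factor, whose boundary is the meridian $m$. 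These identifications are exactly what make the connecting homomorphisms of the pair output the classes $[B_e]$ and $[m_e]$ named in the statement.

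Then I would run the long exact sequence of $(X,E(P_e))$, using that for a simply-connected closed $4$-manifold $H_0(X)\cong\mathbb{Z}$, $H_1(X)=H_3(X)=0$, $H_2(X)$ is free, and $H_4(X)\cong\mathbb{Z}$, together with the hypothesis $t_2=0$. In degree $4$ the fundamental class shows $t_4$ is an isomorphism, so the connecting map out of $H_4(X,E(P_e))$ vanishes and, as $H_3(X)=0$, we obtain $H_3(E(P_e))=0$. In degree $2$, since $t_2=0$ and $H_3(X)=0$, the sequence collapses to
\[
0\to H_3(X,E(P_e))\xrightarrow{\ \partial\ } H_2(E(P_e))\xrightarrow{\ i_*\ } H_2(X)\to 0,
\]
that is $0\to\mathbb{Z}\to H_2(E(P_e))\to H_2(X)\to 0$ with $\partial$ sending the generator to $[B_e]$. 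In degree $1$, using $t_2=0$ and $H_1(X)=0$, the connecting map $H_2(X,E(P_e))\to H_1(E(P_e))$ is an isomorphism, giving $H_1(E(P_e))\cong\mathbb{Z}[m_e]$; and $H_0(E(P_e))\cong\mathbb{Z}[x_e]$ because $E(P_e)$ is connected.

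Finally I would split the degree-$2$ sequence: as $X$ is simply connected, $H_2(X)$ is free abelian, so the sequence splits and $H_2(E(P_e))\cong\mathbb{Z}[B_e]\oplus H_2(X)$, completing the list. I expect the main obstacle to lie in the geometric bookkeeping rather than the algebra, namely verifying carefully that under the excision isomorphism the connecting homomorphisms carry the chosen generators of $H_3(P,\partial P)$ and $H_2(P,\partial P)$ \emph{precisely} to $[B_e]$ and $[m_e]$ (rather than to a multiple or a twisted class), and confirming that the single hypothesis $t_2=0$ is exactly the input that simultaneously makes $i_*\colon H_2(E(P_e))\to H_2(X)$ surjective and makes the degree-$1$ connecting map injective.
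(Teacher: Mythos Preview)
Your proposal is correct and follows essentially the same route as the paper: excision to identify $H_*(X,E(P_e))\cong H_*(P,\partial P)$, then the long exact sequence of the pair $(X,E(P_e))$ together with $t_2=0$. The only difference is that you compute $H_*(P,\partial P)$ via Poincar\'e--Lefschetz duality, whereas the paper does it directly from the long exact sequence of $(P,\partial P)$; both give the same groups and generators, and your version is in fact slightly more explicit about the splitting in degree~$2$ (via freeness of $H_2(X)$) than the paper's brief ``we have the homology groups above.''
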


\begin{proof}
By the long exact sequence of the pair $(P, \partial P)$:
\[\cdots\overset{\partial_{n+1}}{\longrightarrow} H_n(\partial P) \overset{s_{n}}{\longrightarrow} H_n(P) \overset{t_{n}}{\longrightarrow} H_n(P, \partial P) \overset{\partial_{n}}{\longrightarrow}\cdots, \]
we have 
\[H_n(P, \partial P)=
\begin{cases}
   {\mathbb Z}[D^2\times\{*\}]&(n=2), \\
   {\mathbb Z}[\{*\}\times D^3]&(n=3), \\
   {\mathbb Z}[P]&(n=4), \\
   0&(otherwise). 
\end{cases}\]
By the Excision Theorem, we obtain 
\[H_n(X, E(P_e))\cong H_n(P, \partial P)\ \text{for any}\ n\in\mathbb{Z}. \]
By the long exact sequence of pair $(X, E(P_e))$:
\[\cdots\overset{\partial_{n+1}}{\longrightarrow} H_n(E(P_e)) \overset{s_{n}}{\longrightarrow} H_n(X) \overset{t_{n}}{\longrightarrow} H_n(X, E(P_e)) \overset{\partial_{n}}{\longrightarrow}\cdots,\]
we have the homology groups above. 
\end{proof}

\begin{lem}
\label{i21}
Let $t_n:H_n(X)\rightarrow H_n(X,E(P_e)), i_{n1}:H_n(\partial P)\rightarrow H_n(E(P_e)), $\\
$i_{n2}:H_n(\partial P)\rightarrow H_n(P)$ be homomorphisms induced by inclusion maps. 
If $t_2=0$, $i_{11}([l])=0$ and $i_{21}(H_2(\partial P))$ is included in 
$\mathbb{Z}[B_e]\oplus \mathbb{Z}[S]$, then we have $i_{21}([B])=\pm p[B_e]$ and $i_{21}([S])=0$. 
\end{lem}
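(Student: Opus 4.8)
The plan is to push everything onto the induced maps in the homology of the closed oriented $3$-manifold $\partial P=(S^1\times S^2)\#(S^1\times S^2)$ and of its exterior, and to use Poincar\'e and Poincar\'e--Lefschetz duality. Write $j_k\colon H_k(\partial P)\to H_k(E(P_e))$ for the maps induced by $e$ followed by inclusion. Since $t_2=0$, Lemma \ref{homology EPe} gives $H_1(E(P_e))=\Z[m_e]$ and $H_2(E(P_e))=\Z[B_e]\oplus H_2(X)$, with $[B_e]=j_2[B]$ a free generator. Set $[l_e]:=j_1[l]=\lambda[m_e]$ and $[S_e]:=j_2[S]$. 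Because $g_{p/q,\varepsilon}=e\circ E_{p/q,\varepsilon}$, the maps in the statement factor as $i_{k1}=j_k\circ E_{p/q,\varepsilon\,*}$, so it is enough to understand $E_{p/q,\varepsilon\,*}$ on $H_*(\partial P)$ and then apply $j_*$.

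First I would compute the action on $H_2(\partial P)$. On $H_1(\partial P)=\Z[l]\oplus\Z[m]$ we are given $E_{p/q,\varepsilon\,*}[m]=p[m]+q[l]$; write $E_{p/q,\varepsilon\,*}[l]=a[m]+b[l]$ with $bp-aq=\pm1$. The intersection pairing $H_1(\partial P)\times H_2(\partial P)\to\Z$ is unimodular and makes $(l,m)$ and $(B,S)$ dual bases, i.e.\ $l\cdot B=m\cdot S=1$ and $l\cdot S=m\cdot B=0$. As $E_{p/q,\varepsilon}$ is a self-diffeomorphism it preserves this pairing up to the sign of its degree, so its action on $H_2$ is the (signed) inverse transpose of its action on $H_1$; this forces $E_{p/q,\varepsilon\,*}[B]=\pm(p[B]-q[S])$ and $E_{p/q,\varepsilon\,*}[S]=\pm(-a[B]+b[S])$. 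Applying $j_2$,
\[
i_{21}[B]=\pm\bigl(p[B_e]-q[S_e]\bigr),\qquad i_{21}[S]=\pm\bigl(-a[B_e]+b[S_e]\bigr).
\]

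The remaining point, and the main obstacle, is to evaluate $[S_e]$ and cancel the cross terms, and this is where the homological hypotheses enter. I read $i_{11}([l])=0$ as $[l_e]=0$, i.e.\ $\lambda=0$ (this is exactly what yields $H_1(X(e,p/q,\varepsilon))\cong\Z_p$ in Theorem \ref{homologyS4}). Feeding $\lambda=0$ into the ``half lives, half dies'' consequence of Poincar\'e--Lefschetz duality for $E(P_e)$ --- that $\ker j_1$ and $\ker j_2$ are mutual annihilators under the intersection pairing on $\partial P$ --- gives $\ker j_1=\Z[l]$, whose annihilator is $\Z[S]$, so $j_2[S]$ is trivial rationally; since $H_2(E(P_e))$ is free, $[S_e]=0$. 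Hence $i_{21}([B])=\pm p[B_e]$. For $i_{21}([S])=\pm(-a[B_e])$ I would use the hypothesis $i_{21}(H_2(\partial P))\subseteq\Z[B_e]\oplus\Z[S]$ together with the explicit longitudinal action of $E_{p/q,\varepsilon}$ (for the slopes in play $E_{p/q,\varepsilon}$ fixes $[l]$, so $a=0$) to conclude the residual term vanishes, giving $i_{21}([S])=0$. The delicate part is keeping the degree sign coherent through the two duality computations and checking that the longitude hypothesis is precisely what kills the $[S_e]$- and $[B_e]$-cross terms rather than merely constraining them.
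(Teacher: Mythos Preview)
Your route is genuinely different from the paper's, and more conceptual. The paper does not invoke Poincar\'e duality or half-lives-half-dies at all: it simply records, for each elementary diffeomorphism $E_0,\dots,E_5$ of $\partial P$, the induced action on $H_2(\partial P)=\Z[B]\oplus\Z[S]$ by inspection of the pictures, and then observes that the matrix of $E_{p/q,\varepsilon*}$ on $H_2$ has the same entries (up to sign) as on $H_1$. From $g_{p/q,\varepsilon*}[l]=r[m_e]+s[l_e]$ it then reads off $g_{p/q,\varepsilon*}[S]=\pm r[B_e]\pm s[S_e]$ and concludes directly. Your inverse-transpose argument compresses that table of six cases into one line, and your half-lives-half-dies identifies $[S_e]$ without any case analysis. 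Both approaches land on the same linear-algebraic endpoint.

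There is, however, a genuine gap in your final paragraph. You treat the hypothesis $i_{11}([l])=0$ as if it said $[l_e]=j_1[l]=0$, and then separately invoke $a=0$ ``for the slopes in play'' to kill the $-a[B_e]$ term. Neither step is justified for general $p/q$: writing $[l_e]=\lambda[m_e]$, the hypothesis $i_{11}([l])=0$ says exactly $a+b\lambda=0$, not $\lambda=0$; and $E_{p/q,\varepsilon}$ certainly does not fix $[l]$ once $E_2$ or $E_4$ enters the word, so $a\neq 0$ in general. The fix is already implicit in your own machinery: your duality argument gives $[S_e]=-\lambda[B_e]$ (track the sign through the annihilator computation rather than just concluding $[S_e]$ is torsion), and then
\[
i_{21}([S])=\pm(-a[B_e]+b[S_e])=\pm(-a-b\lambda)[B_e]=\mp(a+b\lambda)[B_e]=0
\]
follows immediately from $i_{11}([l])=(a+b\lambda)[m_e]=0$, with no need for $a=0$ or $\lambda=0$ individually. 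The same substitution gives $i_{21}([B])=\pm(p+q\lambda)[B_e]$; matching this with the stated conclusion $\pm p[B_e]$ (and with the paper's parallel assertion $i_{11}([m])=p[m_e]$) does require $q\lambda=0$, so both proofs are tacitly using that the longitude $l_e$ is null-homologous in $E(P_e)$ --- you were right to flag that reading, but it should be stated as an assumption rather than deduced from $i_{11}([l])=0$.
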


\begin{proof}
By the definitions of $E_0, E_1, E_2, E_3, E_4$ and $E_5$ in the proof of Proposition \ref{recipe of pochette surgery}, we obtain 
\[E_{i*}([m])=
\begin{cases}
   [m]&(i=0, 2, 4), \\
   [m]+[l]&(i=1), \\
   [m]-[l]&(i=3), \\
   -[m]&(i=5),
\end{cases}
E_{i*}([l])=
\begin{cases}
   [l]&(i=0, 1, 3, 5), \\
   [m]+[l]&(i=2), \\
   -[m]+[l]&(i=4),
\end{cases}
\]
\[E_{i*}([B])=
\begin{cases}
   [B]&(i=0, 2, 4), \\
   [B]\mp[S]&(i=1), \\
   [B]\pm[S]&(i=3), \\
   \pm[B]&(i=5),
\end{cases}
E_{i*}([S])=
\begin{cases}
   [S]&(i=0, 1, 3), \\
   \mp[B]+[S]&(i=2), \\
   \pm[B]+[S]&(i=4), \\
   \mp[S]&(i=5)
\end{cases}
\]
(double-sign corresponds).
Then, there exist some integers $r, s$ such that $g_{p/q, \varepsilon*}([m])$\\
$=$$p[m_e]+q[l_e]$, $g_{p/q, \varepsilon*}([l])$$=$ $r[m_e]+s[l_e]$, $g_{p/q, \varepsilon\ast}([B])$$=$$\pm p[B_e]\pm q[S_e]$ and $g_{p/q, \varepsilon*}([S])$\\
$=$$\pm r[B_e]\pm s[S_e]$. 
Let $i_{\partial E(P_e)}:\partial E(P_e)\rightarrow E(P_e)$ be the inclusion map.
Then we have $i_{n1}=(i_{\partial E(P_e)}\circ g_{p/q,\varepsilon})_{n*}$ for any $n\in\mathbb{Z}$.
Here $f_{n*}:H_n(A)\rightarrow H_n(B)$ is the $n$-th induced homomorphism on homology of a continuous map $f:A\rightarrow B$.
If $t_2=0$, by Lemma \ref{homology EPe}, $H_1(E(P_e))={\mathbb Z}[m_e]$ and $H_2(E(P_e))={\mathbb Z}[B_e]\oplus H_2(X)$.
If $i_{11}([l])=0$ and $i_{21}(H_2(\partial P))$ is included in 
$\mathbb{Z}[B_e]\oplus \mathbb{Z}[S]$, then we have $i_{21}([S])=0$.
By $i_{11}([m])=p[m_e]$, we also have $i_{21}([B])=\pm p[B_e]$. 
Therefore, we obtain the desired result above. 
\end{proof}

For a simply-connected closed $4$-manifold $X$, we give a necessary condition for a pochette surgery of $X$ to have the same homology as $X$.
\begin{prop}
\label{the homology of pochette surgery}
Let $t_n:H_n(X)\rightarrow H_n(X,E(P_e)), i_{n1}:H_n(\partial P)\rightarrow H_n(E(P_e))$, \\
$i_{n2}:H_n(\partial P)\rightarrow H_n(P)$ be homomorphisms induced by inclusion maps. 
If $t_2=0$, $i_{11}([l])=0$ and $i_{21}(H_2(\partial P))$ is included in $\mathbb{Z}[B_e]\oplus \mathbb{Z}[S]$, then the homology groups of the pochette surgery $X(e,p/q,\varepsilon)$ are calculated as follows:
$$H_n(X(e, p/q, \varepsilon))=
\begin{cases}
   \mathbb{Z}&(n=0, 4), \\
   \mathbb{Z}_{p}&(n=1). 
\end{cases}$$
Moreover, if $|p|$ is equal to $1$, then $X(e,p/q,\varepsilon)$ has the same homology groups as $X$.  
\end{prop}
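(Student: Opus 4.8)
The plan is to compute everything from the Mayer--Vietoris sequence of the decomposition that defines the surgery, namely $X(e,p/q,\varepsilon)=E(P_e)\cup_{g_{p/q,\varepsilon}}P$, glued along the common boundary $\partial P$. The three inputs are: the homology of $E(P_e)$, supplied by Lemma~\ref{homology EPe} under the hypothesis $t_2=0$; the homology of the glued-in pochette $P$; and the two structure homomorphisms $i_{n1}$ and $i_{n2}$ recording how $\partial P$ sits in $E(P_e)$ (through $g_{p/q,\varepsilon}$) and in $P$.

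First I would record the pieces. Since $P\simeq S^1\vee S^2$, we have $H_0(P)\cong H_1(P)\cong H_2(P)\cong\mathbb{Z}$ with $H_1(P)$ generated by $[l]$ and $H_2(P)$ by $[S]$, and $H_n(P)=0$ for $n\ge 3$; the gluing region $\partial P\cong(S^1\times S^2)\#(S^1\times S^2)$ has $H_1(\partial P)=\mathbb{Z}[m]\oplus\mathbb{Z}[l]$, $H_2(\partial P)=\mathbb{Z}[B]\oplus\mathbb{Z}[S]$, and $H_3(\partial P)=\mathbb{Z}$; and $H_*(E(P_e))$ is exactly Lemma~\ref{homology EPe}. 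Next I would pin down the two maps. The inclusion $\partial P\hookrightarrow P$ gives $i_{12}([m])=0$, $i_{12}([l])=[l]$, $i_{22}([B])=0$, $i_{22}([S])=[S]$, since the meridian and the sphere $B=\{*\}\times\partial D^3$ bound in $P$ while $l$ and $S$ are the generators. For $i_{n1}=(i_{\partial E(P_e)}\circ g_{p/q,\varepsilon})_{n*}$, Lemma~\ref{i21} gives $i_{21}([B])=\pm p[B_e]$ and $i_{21}([S])=0$, and $g_{p/q,\varepsilon*}([m])=p[m_e]+q[l_e]$ together with $i_{11}([l])=0$ yields $i_{11}([m])=p[m_e]$ and $i_{11}([l])=0$ in $H_1(E(P_e))=\mathbb{Z}[m_e]$.

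With these maps in hand the computation is bookkeeping. Writing $\psi_n=(i_{n1},-i_{n2})$ for the Mayer--Vietoris map into $H_n(E(P_e))\oplus H_n(P)$, in degree $1$ the connecting map $\delta_1$ vanishes because $H_0(\partial P)\to H_0(E(P_e))\oplus H_0(P)$ is injective, so $H_1(X(e,p/q,\varepsilon))\cong\operatorname{coker}(\psi_1)$ with $\psi_1([m])=(p[m_e],0)$ and $\psi_1([l])=(0,-[l])$; this cokernel is $\mathbb{Z}_p$. The ends of the sequence give $H_0\cong H_4\cong\mathbb{Z}$ (equivalently, $X(e,p/q,\varepsilon)$ is a closed connected oriented $4$-manifold). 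For the ``moreover'' part I would continue one more step: $H_3\cong\ker(\psi_2)$ and $H_2$ lies in a short exact sequence $0\to\operatorname{coker}(\psi_2)\to H_2\to\ker(\psi_1)\to 0$, where $\psi_2([B])=(\pm p[B_e],0)$ and $\psi_2([S])=(0,-[S])$. Setting $|p|=1$ makes $\psi_1$ and $\psi_2$ injective, so $\ker\psi_1=\ker\psi_2=0$, giving $H_3=0=H_3(X)$ and $H_2\cong\operatorname{coker}(\psi_2)\cong H_2(X)$; combined with $H_0\cong H_4\cong\mathbb{Z}$ and $H_1=0$, all groups agree with those of $X$.

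The hard part will be establishing the two structure homomorphisms, rather than the exact-sequence manipulation. Specifically, the crux is to see that $[B]$ feeds the torsion $\mathbb{Z}_p$ through $i_{21}([B])=\pm p[B_e]$ while the summand $H_2(X)\subset H_2(E(P_e))$ is left completely untouched by the image of $i_{21}$, and simultaneously that $i_{11}([m])=p[m_e]$ with $i_{11}([l])=0$ so that $\psi_1$ produces exactly $\mathbb{Z}_p$ in degree $1$. This is precisely what the three hypotheses $t_2=0$, $i_{11}([l])=0$, and $i_{21}(H_2(\partial P))\subseteq\mathbb{Z}[B_e]\oplus\mathbb{Z}[S]$ are engineered to guarantee, via Lemmas~\ref{homology EPe} and~\ref{i21}; once they are verified the remaining steps are routine.
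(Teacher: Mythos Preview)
Your proposal is correct and follows essentially the same approach as the paper: both run the Mayer--Vietoris sequence for $X(e,p/q,\varepsilon)=E(P_e)\cup_{g_{p/q,\varepsilon}}P$, feed in Lemma~\ref{homology EPe} for $H_*(E(P_e))$ and Lemma~\ref{i21} for $i_{21}$, record the same values $i_{11}([m])=p[m_e]$, $i_{12}([l])=[l]$, $i_{22}([S])=[S]$, and read off $H_1\cong\mathbb{Z}_p$ and (for $|p|=1$) $H_2\cong H_2(X)$, $H_3=0$. Your packaging in terms of $\psi_n=(i_{n1},-i_{n2})$, $\operatorname{coker}\psi_n$, and $\ker\psi_n$ is just a notational variant of the paper's $\mathrm{Im}\,j_n$ and $\mathrm{Im}\,\partial_n$ bookkeeping.
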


\begin{proof}
Since $X$ is connected and oriented, $H_n(X(e, p/q, \varepsilon))\cong{\mathbb Z}$ for any $n=0, 4$.  
We compute $H_1(X(e, p/q, \varepsilon))$ here. 
By Lemma \ref{homology EPe} and the Mayer-Vietoris sequence 
\[\cdots\overset{\partial_{n+1}}{\longrightarrow}H_n(\partial P)\overset{i_{n1}\oplus i_{n2}}{\longrightarrow} H_n(E(P_e))\oplus H_n(P)\overset{j_n}{\longrightarrow} H_n(X(e, p/q, \varepsilon))\overset{\partial_{n}}{\longrightarrow}\cdots, \]
we obtain the following: 
\[
  \begin{CD}
      @>>>  0 @>>>  H_3(X(e, p/q, \epsilon))  @>{\partial_{3}}>> {\mathbb Z}[B]\oplus{\mathbb Z}[S] \\
      @>{i_{21}\oplus i_{22}}>> ({\mathbb Z}[B_e]\oplus H_2(X))\oplus{\mathbb Z}[S] @>{j_2}>> H_2(X(e, p/q, \epsilon)) @>{\partial_{2}}>> {\mathbb Z}[m]\oplus{\mathbb Z}[l] \\
      @>{i_{11}\oplus i_{12}}>> {\mathbb Z}[m_e]\oplus{\mathbb Z}[l] @>{j_1}>> H_1(X(e, p/q, \epsilon)) @>{\partial_{1}}>> H_0(\partial P). 
  \end{CD}
\]
Then, we have $i_{11}([m])=p[m_e], i_{12}([m])=0, i_{12}([l])=[l]$ and $i_{22}([B])=0, i_{22}([S])$ \\
$=[S]. $
If $i_{11}([l])=0$, then we have 
\[\mathrm{Ker}\, \partial_{1}=\mathrm{Im}\, j_{1}\cong{\mathbb Z}[m_e]\oplus{\mathbb Z}[l]/\mathrm{Im}\, (i_{11}\oplus i_{12}) \cong\mathbb{Z}[m_e]/p\mathbb{Z}[m_e]\cong\mathbb{Z}_{p}\]
and $\mathrm{Im}\, \partial_{1}=0. $
Thus $H_1(X(e, p/q, \varepsilon))\cong\mathbb{Z}_{p}. $
If $|p|=1$, then we have 
$H_n(X(e, 1/q, \varepsilon))$\\
$\cong H_n(X)$ for any $n=0, 1, 4.$
By Lemma \ref{i21}, we have $i_{21}([B])=\pm p[B_e], i_{21}([S])=0. $
Therefore, we have $\mathrm{Ker}\, \partial_{3}=0, \mathrm{Im}\, \partial_{3}=\mathrm{Ker}\, (i_{21}\oplus i_{22})=0, $ 
and
\[\mathrm{Ker}\, \partial_{2}=\mathrm{Im}\, j_{2}\cong({\mathbb Z}[B_e]\oplus H_2(X))\oplus{\mathbb Z}[S]/\mathrm{Im}\, (i_{21}\oplus i_{22})=H_2(X), \]
$\mathrm{Im}\, \partial_{2}=\mathrm{Ker}\, (i_{11}\oplus i_{12})=0$. 
Then we have $H_n(X(e, 1/q, \varepsilon))\cong H_n(X)$ for any $n = 2, 3.$
\end{proof}

\begin{rmk}
\rm Proposition \ref{the homology of pochette surgery} is a generalization of Theorem \ref{homologyS4}. 
\end{rmk}

The next corollary follows from Proposition \ref{the homology of pochette surgery} and the Freedman theorem \cite{F, FQ}. 
\begin{cor}
\label{homeomorphic class}
If $t_2=0$, $i_{11}([l])=0$ and $i_{21}(H_2(\partial P))$ is included in $\mathbb{Z}[B_e]\oplus \mathbb{Z}[S]$, then $X(e, p/q, \varepsilon)$ is homeomorphic to $X$ if and only if $X(e, p/q, \varepsilon)$ is a simply connected 4-manifold and $|p|$ is equal to 1.
\end{cor}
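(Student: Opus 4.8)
The plan is to deduce Corollary~\ref{homeomorphic class} from Proposition~\ref{the homology of pochette surgery} together with Freedman's classification of simply connected topological $4$-manifolds. The statement is an if-and-only-if, so I would prove the two directions separately, and I expect the forward direction (homeomorphic $\Rightarrow$ simply connected and $|p|=1$) to be the genuinely easy half, while the converse is where Freedman's theorem does the real work.

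First I would treat the forward direction. Assume $X(e,p/q,\varepsilon)$ is homeomorphic to $X$. Since $X$ is simply connected by hypothesis, so is $X(e,p/q,\varepsilon)$; this is immediate because $\pi_1$ is a homeomorphism invariant. For the condition $|p|=1$, I would note that homeomorphic manifolds have isomorphic first homology, so $H_1(X(e,p/q,\varepsilon))\cong H_1(X)=0$ (recall $X$ is simply connected, hence $H_1(X)=0$). Under the standing hypotheses $t_2=0$, $i_{11}([l])=0$ and $i_{21}(H_2(\partial P))\subset\mathbb{Z}[B_e]\oplus\mathbb{Z}[S]$, Proposition~\ref{the homology of pochette surgery} gives $H_1(X(e,p/q,\varepsilon))\cong\mathbb{Z}_p$. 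Therefore $\mathbb{Z}_p=0$, which forces $|p|=1$.

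For the converse, suppose $X(e,p/q,\varepsilon)$ is simply connected and $|p|=1$. By the ``moreover'' clause of Proposition~\ref{the homology of pochette surgery}, when $|p|=1$ the surgered manifold has the same homology groups as $X$ in every degree. Both $X$ and $X(e,p/q,\varepsilon)$ are closed, oriented, simply connected topological $4$-manifolds, so by Freedman's theorem \cite{F, FQ} each is determined up to homeomorphism by its intersection form (together with the Kirby--Siebenmann invariant). The main point I would have to verify is that the surgery does not change the intersection form: since pochette surgery alters $X$ only inside $\mathrm{int}\,P_e$ and the exterior $E(P_e)$ is preserved, the second homology and the intersection pairing are carried along by the inclusion $E(P_e)\hookrightarrow X$ and $E(P_e)\hookrightarrow X(e,p/q,\varepsilon)$; the Mayer--Vietoris computation in the proof of Proposition~\ref{the homology of pochette surgery} shows $H_2(X(e,p/q,\varepsilon))\cong H_2(X)$ is realized through $E(P_e)$, so the forms agree. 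Likewise the Kirby--Siebenmann invariants agree because the surgery region is smoothable. Freedman's theorem then yields a homeomorphism $X(e,p/q,\varepsilon)\cong X$.

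The main obstacle is the converse direction: it is not enough to match abstract homology groups, one must confirm that the unimodular intersection forms coincide. The cleanest route is to argue that the isomorphism $H_2(X(e,p/q,\varepsilon))\cong H_2(X)$ furnished by the Mayer--Vietoris sequence is induced by the common inclusion of $E(P_e)$, hence is an isometry of intersection forms, so that Freedman's uniqueness applies directly.
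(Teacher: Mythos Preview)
Your proposal is correct and follows essentially the same approach as the paper: both directions rely on Proposition~\ref{the homology of pochette surgery} for the homology, and the converse invokes Freedman's theorem after observing that the natural $H_2$-isomorphism through $E(P_e)$ preserves the intersection form and that the Kirby--Siebenmann invariants vanish because both manifolds are smooth. The paper phrases the last point by noting that $X$ and $X(e,p/q,\varepsilon)$ carry differential structures, hence $ks=0$, which is exactly your ``smoothable'' observation.
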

\begin{proof}
If $X(e, p/q, \varepsilon)$ is homeomorphic to $X$, then $X(e, p/q, \varepsilon)$ has the same homology groups as $X$. 
By Proposition \ref{the homology of pochette surgery}, $X(e, p/q, \varepsilon)$ is a simply connected 4-manifold and $|p|=1$. 
Conversely, if $X(e, p/q, \varepsilon)$ is a simply connected 4-manifold and $|p|=1$, we obtain a natural isomorphism $H_2(X(e, p/q, \varepsilon))\cong H_2(X)$ by the proof of Proposition \ref{the homology of pochette surgery}. 
Hence, $Q_{X(e, p/q, \varepsilon)}\cong Q_X$. 
Here, $Q_Y$ is the intersection form of the 4-manifold $Y$. 
Since $X(e, p/q, \varepsilon)$ and $X$ are simply connected 4-dimensional closed manifolds with differential structures, $X(e, p/q, \varepsilon)\times\mathbb{R}$ and $X\times\mathbb{R}$ have differential structures. 
Therefore, we obtain $ks(X(e, p/q, \varepsilon))=0=ks(X)$. 
Here, $ks(Y)$ is the Kirby-Siebenmann invariant of $Y$. 
By the Freedman theorem, $X(e, p/q, \varepsilon)$ is homeomorphic to $X$. 
Therefore, we obtain the desired result above. 
\end{proof}


\section{Proofs of main theorems}
\label{Proofs of main theorems}
Canceling the $1$-handle/$2$-handle pairs and the $2$-handle/$3$-handle pair in the handle diagrams depicted in Figures \ref{Theorem 1.1-1} and \ref{Theorem 1.2-1}, we obtain the standard handle diagram of the $4$-sphere which consists of a $0$-handle and a $4$-handle.
Therefore both of the handle diagrams depicted in Figures \ref{Theorem 1.1-1} and \ref{Theorem 1.2-1} are those of the $4$-sphere. 
By Proposition \ref{recipe of pochette surgery} and Remark \ref{a generalization of prop}, the handle diagrams depicted in Figures \ref{Theorem 1.1-2} and \ref{Theorem 1.2-2} are those of the manifold $S^4(e_{k, n}, 1/q, \varepsilon)$ and the manifold $S^4(e_{m, n}, 1/q, \varepsilon)$, respectively. 
Let $H(n)$ be the union of $n$ $3$-handles and a $4$-handle. 
\vskip\baselineskip
\noindent
{\it Proof of Theorem 1.1.}\ We remove the $3$-handle and the $4$-handle in the handle diagram depicted in Figure \ref{Theorem 1.1-2}. 
Taking the double of the obtained handle diagram and removing all the $1$-handles and $2$-handles that has existed since the handle diagram depicted in Figure \ref{Theorem 1.1-2}, we obtain the handle diagram depicted in Figure \ref{Theorem 1.1proof1}. 
By several handle slides on $0$-framed meridians and $\langle0\rangle$-framed knots in Figure \ref{Theorem 1.1proof1}, $4k-7\ 0$-framed knots can be changed to $4k-7$ $0$-framed unknots. 
Canceling the $4k-7$ $2$-handle/$3$-handle pairs, we obtain the handle diagram depicted in Figure \ref{Theorem 1.1proof2}. 
By the handle slides in Figure \ref{Theorem 1.1proof2}, we obtain the handle diagram depicted in Figure \ref{Theorem 1.1proof3}. 
By several handle slides on the $2$ $\langle0\rangle$-framed meridians in Figure \ref{Theorem 1.1proof3}, we obtain the handle diagram depicted in Figure \ref{Theorem 1.1proof4}. 
We can cancel the $2$ Hopf links which consists of $2$ $\langle\cdot\rangle$-framed knots in Figure \ref{Theorem 1.1proof4}. 
By the handle slides in Figure \ref{Theorem 1.1proof4} and several handle slides on the $3$ $\langle0\rangle$-framed meridians, we obtain the handle diagram depicted in Figure \ref{Theorem 1.1proof5}. 
By the handle slides in Figure \ref{Theorem 1.1proof5}, we obtain a Hopf link and a $0$-framed meridian. 
By several handle slides on $0$-framed meridians and $\langle0\rangle$-framed knots in Figure \ref{Theorem 1.1proof5}, we obtain the handle diagram depicted in Figure \ref{Theorem 1.1proof6}. 
Repeating the handle calculus in Figure \ref{Theorem 1.1proof2}--\ref{Theorem 1.1proof6} in the same way, we obtain the handle diagram depicted in Figure \ref{Theorem 1.1proof7}. 
Here, $L=\sum_{i=3}^{k^2-1}n_i$.

By the handle slide in Figure \ref{Theorem 1.1proof7}, we obtain the handle diagram depicted in Figure \ref{Theorem 1.1proof8}.
By several handle slides on the moved $\langle0\rangle$-framed knot, we obtain the handle diagram depicted in Figure \ref{Theorem 1.1proof9}.
Canceling the 2 $2$-handle/$3$-handle pairs and the 3 Hopf links, we obtain the handle diagrams depicted in Figure \ref{Theorem 1.1proof10}, \ref{Theorem 1.1proof11} and \ref{Theorem 1.1proof12} in order.
Changing the $\langle0\rangle$-framed knot in Figure \ref{Theorem 1.1proof12} to a dotted circle, the upside down of the handle diagram depicted in Figure \ref{Theorem 1.1-2} is completed.
Canceling the $1$-handle/$2$-handle pair, we obtain the standard handle diagram of the $4$-sphere which consists of a $0$-handle and a $4$-hanlde. 

Therefore, $S^4(e_{k, n}, 1/q, \varepsilon)$ is diffeomorphic to $S^4$.
\qed
\vskip\baselineskip
\noindent
{\it Proof of Theorem 1.2.}\ By the handle slides in Figure \ref{Theorem 1.2proof1}, several handle slides on dotted circles and canceling the $s(t-1)$ $1$-handle/$2$-handle pairs, we obtain the handle diagram depicted in Figure \ref{Theorem 1.2proof2}. 
Here, $x_i=\sum_{j=1}^{t}n_{s(j-1)+i}$ for any $i=1,\ldots,s$.
By the handle slides in Figure \ref{Theorem 1.2proof2}--\ref{Theorem 1.2proof3}, we obtain the handle diagram depicted in Figure \ref{Theorem 1.2proof4}. 
Here, $a,b_1$ are some integers.
Repeating the method of the handle calculus in Figure \ref{Theorem 1.2proof3}--\ref{Theorem 1.2proof4}, we obtain the handle diagram depicted in Figure \ref{Theorem 1.2proof5}. 
Here, $b_2,\ldots,b_{s-2}$ are some integers.

By the handle slide in Figure \ref{Theorem 1.2proof6} or \ref{Theorem 1.2proof8}, we obtain the handle diagram depicted in Figure \ref{Theorem 1.2proof10}.
By the handle slide in Figure \ref{Theorem 1.2proof10}, we obtain the handle diagram depicted in Figure \ref{Theorem 1.2proof11}. 
By several handle slides on the 0-framed meridian, the handle diagram depicted in Figure \ref{Theorem 1.2proof11} can be changed to that depicted in Figure \ref{Theorem 1.2proof12}. 
By the handle calculus in Figure \ref{Theorem 1.2proof10}--\ref{Theorem 1.2proof11} or \ref{Theorem 1.2proof12} $|m_s|$ times, we obtain the handle diagram depicted in Figure \ref{Theorem 1.2proof13}. 
By the handle slide in Figure \ref{Theorem 1.2proof13} and the method of the handle calculus in Figure \ref{Theorem 1.2proof10}--\ref{Theorem 1.2proof11} or \ref{Theorem 1.2proof12} $|m_{s-1}+m_s|$ times, we obtain the handle diagram depicted in Figure \ref{Theorem 1.2proof14}. 
Here, $c=m_{s-1}+m_s$.
By repeating the method of the handle calculus in Figure \ref{Theorem 1.2proof10}--\ref{Theorem 1.2proof14} and $\sum_{i=1}^{s}m_i=0$, we can the $(x_1+\varepsilon)$-framed knot in Figure \ref{Theorem 1.2proof5} away from the dotted circles entwined with the $n_{st+1}$-framed knot. 
Thus, we obtain the handle diagram depicted in Figure \ref{Theorem 1.2proof15}. 

By the handle slide in Figure \ref{Theorem 1.2proof6} (Figure \ref{Theorem 1.2proof8}) and the handle calculus in Figure \ref{Theorem 1.2proof10}--\ref{Theorem 1.2proof15}, we obtain the handle diagram depicted in Figure \ref{Theorem 1.2proof7} (Figure \ref{Theorem 1.2proof9}) (double sign corresponds).
Therefore, we have the handle diagram depicted in Figure \ref{Theorem 1.2proof5} with $q=0$ by the handle slide in Figure \ref{Theorem 1.2proof6} or \ref{Theorem 1.2proof8} $|q|$ times. By canceling the $s+1$ $1$-handle/$2$-handle pairs and the $2$-handle/$3$-handle pair in the handle diagrams depicted in Figure \ref{Theorem 1.2proof5} with $q=0$, we obtain the standard handle diagram of the $4$-sphere which consists of a $0$-handle and a $4$-handle.

Therefore, $S^4(e_{m, n}, 1/q, \varepsilon)$ is diffeomorphic to $S^4$.
\qed\\

We end this section by raising a conjecture about pochette surgery of the $4$-sphere. 
\begin{conj}
\rm If the pochette surgery $S^4(e, p/q, \varepsilon)$ is homotopy equivalent to the 4-sphere $S^4$, then $S^4(e, p/q, 0)$ is diffeomorphic to $S^4$ and $S^4(e, p/q, 1)$ is the Gluck surgery along $S_e$. 
Especially when $S_e$ is a twist spun $2$-knot or a $0$-slice $2$-knot, $S^4(e, p/q, \varepsilon)$ is diffeomorphic to $S^4$ for any $\varepsilon\in\{0,1\}$. 
\end{conj}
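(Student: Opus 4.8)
The plan is to reduce the statement to the two model cases $S^4(e,1/0,0)=S^4$ and $S^4(e,1/0,1)=$ (Gluck surgery along $S_e$), and to isolate exactly where genuine four-dimensional difficulty enters. First I would use homology to fix the slope. Since $H_2(S^4)=0$, the hypothesis $t_2=0$ of Proposition \ref{the homology of pochette surgery} holds automatically, and I would check, via van Kampen and the exact sequences of Lemma \ref{homology EPe}, that homotopy equivalence to $S^4$ forces $i_{11}([l])=0$ together with $i_{21}(H_2(\partial P))\subset\mathbb{Z}[B_e]\oplus\mathbb{Z}[S]$. Proposition \ref{the homology of pochette surgery} then gives $H_1\cong\mathbb{Z}_p$, and vanishing of $\pi_1$ forces $|p|=1$. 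After this reduction the slope is $1/q$ and the explicit surgered diagram of Remark \ref{a generalization of prop}, namely Figure \ref{Proposition 3.1-2}(A) with $p=1$, is available as the working model.

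Second, I would separate the two framings. With the maps $E_i$ from the proof of Proposition \ref{recipe of pochette surgery}, the gluing map satisfies $g_{1/q,1}=g_{1/q,0}\circ E_0$, where $E_0$ is the $1$-Rolfsen twist on the $\langle 0\rangle$-framed knot coming from $S$. Since $E_{0*}=\mathrm{id}$ on $H_1(\partial P)$ but $E_0$ realizes the nontrivial mod $2$ framing, re-gluing with $\varepsilon=1$ rather than $\varepsilon=0$ amounts to a Gluck twist supported on a neighborhood of the re-glued core sphere $S$. Hence $S^4(e,1/q,1)$ is the Gluck twist of $S^4(e,1/q,0)$ along the image of $S$. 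Granting the two model computations together with the identification of this image with $S_e$, the second conclusion follows from the first, and the special case follows formally: once $S^4(e,1/q,1)$ is identified with the Gluck surgery along $S_e$, the results of Gluck \cite{G} and Melvin \cite{Me} give $S^4$ when $S_e$ is twist-spun or $0$-slice, while the first conclusion gives $S^4$ for $\varepsilon=0$.

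Third, the core analytic content is to prove $S^4(e,1/q,0)\cong S^4$ and, in the same stroke, that it is independent of $q$. In the diagram of Figure \ref{Proposition 3.1-2}(A) with $p=1$ the attaching curve $g_{1/q,0}(m)$ meets the cocore of the $1$-handle (the longitude) algebraically once because $|p|=1$; I would try to promote this to a geometric once-over by handle slides, cancel the resulting $1$-handle/$2$-handle pair, and then collapse the remaining $\langle 0\rangle$- and $0$-framed data under the even-framing normalization to the standard $0$-handle/$4$-handle picture of $S^4$. The input driving the slides that remove the $|q|$ windings of the longitude is that $g_{1/q,0}(m)$ normally generates $\pi_1(E(P_e))$, which is precisely the simple-connectivity of the surgered manifold deduced in the first step.

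The hardest point, and the reason the statement is posed only as a conjecture, is exactly this cancellation. Algebraic intersection number one yields cancellation only modulo the standard smooth subtleties in dimension four; when $q\neq\pm1$ the longitude winds $|q|$ times and the sole tool for unwinding is the homotopy-theoretic fact that $g_{1/q,0}(m)$ normally generates $\pi_1(E(P_e))$, whereas the desired conclusion is an honest diffeomorphism to $S^4$. This gap has the flavour of the smooth $4$-dimensional Poincar\'e conjecture, so I would not expect elementary handle moves to suffice for every embedding $e$; identifying the Gluck-twist sphere of the second step with $S_e$ up to smooth isotopy is a secondary obstacle of the same isotopy-theoretic kind.
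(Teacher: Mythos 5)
You are attempting to prove a statement that the paper itself poses only as a Conjecture: the paper contains no proof of it, so there is no argument of the author's to compare yours against, and your proposal must stand on its own. It does not, and the failure begins at your first reduction. From $H_1(S^4(e,p/q,\varepsilon))=0$ you may conclude only that $i_{11}([m])$ generates $H_1(E(P_e))\cong\mathbb{Z}[m_e]$ (Lemma \ref{homology EPe} applies since $H_2(S^4)=0$). Writing $(i_{\partial E(P_e)})_*([l_e])=c\,[m_e]$ for the integer $c$ determined by the embedding, one has $i_{11}([m])=(p+qc)[m_e]$, so the Mayer--Vietoris computation gives $H_1(S^4(e,p/q,\varepsilon))\cong\mathbb{Z}_{|p+qc|}$, and homotopy equivalence forces $|p+qc|=1$ --- it forces neither $i_{11}([l])=0$ nor $|p|=1$. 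The hypotheses $i_{11}([l])=0$ and $i_{21}(H_2(\partial P))\subset\mathbb{Z}[B_e]\oplus\mathbb{Z}[S]$ of Proposition \ref{the homology of pochette surgery} are constraints on the embedding and gluing, not consequences of the homotopy type of the surgered manifold; this is precisely why Okawa needed the additional geometric hypothesis that the cord is trivial in order to conclude $p=1$, as the introduction recalls. Your reduction to slope $1/q$ is therefore unjustified for a general embedding $e$, while the conjecture deliberately allows arbitrary $p/q$ subject only to the homotopy hypothesis.

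Even granting $|p|=1$, what remains is an outline, not a proof. Your second step correctly observes that $g_{p/q,1}$ and $g_{p/q,0}$ differ by $E_0$, so that $S^4(e,p/q,1)$ is the Gluck twist of $S^4(e,p/q,0)$ along the re-glued core sphere; but the conjecture asserts that $S^4(e,p/q,1)$ is the Gluck surgery along $S_e$ in the original $S^4$, and to bridge these you need a diffeomorphism $S^4(e,p/q,0)\rightarrow S^4$ carrying the image of $S$ to $S_e$. You defer this as a ``secondary obstacle,'' but it is an essential unproved step, of the same depth as the main one. Your third step --- cancelling the $1$-handle against a $2$-handle whose attaching circle meets the belt sphere algebraically once --- is exactly the open smooth problem: in dimension $4$ algebraic intersection number one does not yield geometric cancellation, and normal generation of $\pi_1(E(P_e))$ by $g_{1/q,0}(m)$ is a homotopy-theoretic fact with no known smooth upgrade, as you yourself concede. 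What your argument genuinely recovers is only the topological conclusion already available from Corollary \ref{homeomorphic class} via Freedman's theorem, namely that the surgered manifold is homeomorphic to $S^4$ under the stated algebraic hypotheses. In short, the proposal contains one incorrect step (the forced vanishing of $i_{11}([l])$ and the resulting $|p|=1$) and two admitted gaps; it is a reasonable explanation of why the statement is conjectured, and of how the theorems of the paper constrain it, but it is not a proof.
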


\begin{figure}[htbp]
  \begin{minipage}[b]{0.45\linewidth}
    \centering
     \begin{overpic}[scale=0.5]
     {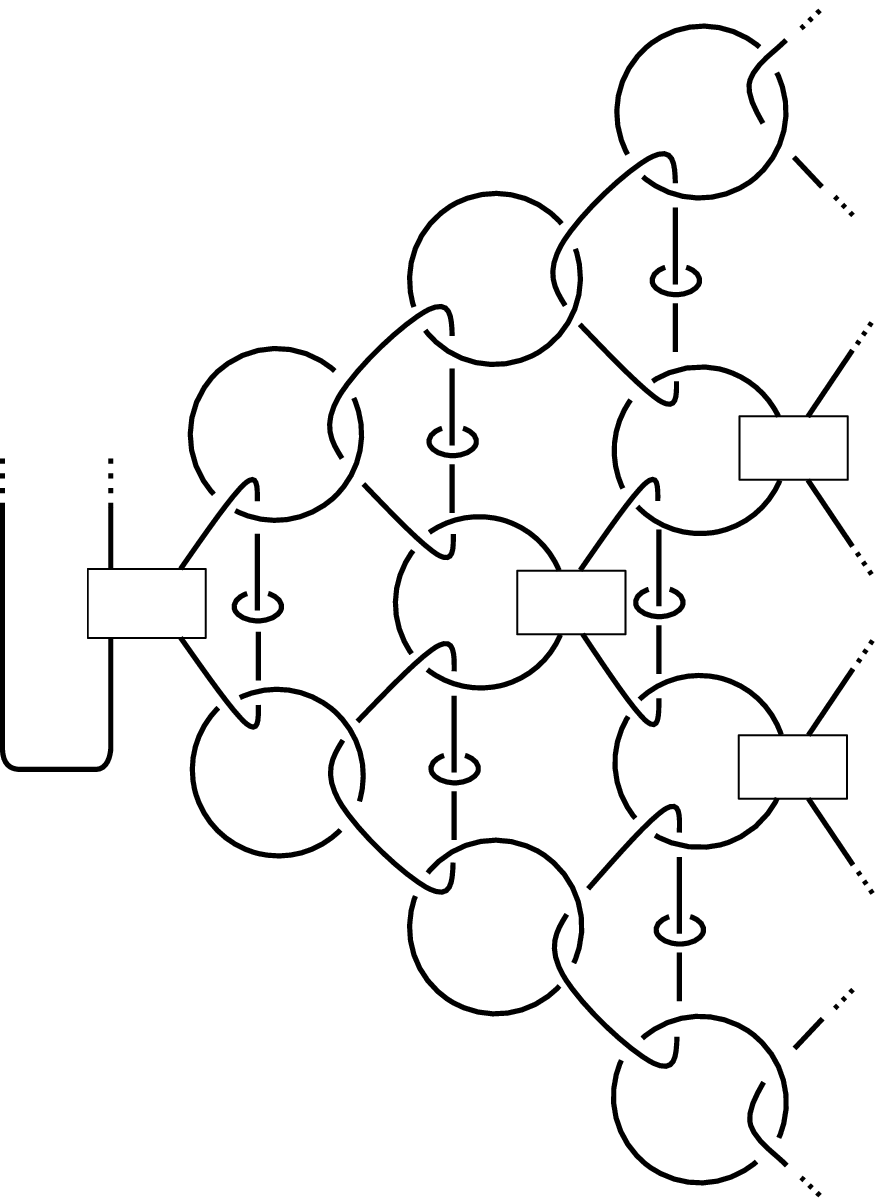}
     \put(1,30){$\langle0\rangle$}
     \put(19.5,22){$\langle0\rangle$}
     \put(19.5,74){$\langle0\rangle$}
     \put(37.5,10){$\langle0\rangle$}
     \put(34,48.5){$\langle0\rangle$}
     \put(37.5,87){$\langle0\rangle$}
     \put(55,-5){$\langle0\rangle$}
     \put(44,34.5){$\langle0\rangle$}
     \put(44,62){$\langle0\rangle$}
     \put(55,101){$\langle0\rangle$}

     \put(9,48){$\pm1$}
     \put(44.5,48){$\pm1$}
     \put(63,34){$\pm1$}
     \put(63,61){$\pm1$}

     \put(24.5,48){$0$}
     \put(40.5,34.5){$0$}
     \put(40.5,62){$0$}
     \put(59.5,21){$0$}
     \put(58,48){$0$}
     \put(59.5,75){$0$}
     \end{overpic}
    \caption{}
    \label{Theorem 1.1proof1}
  \end{minipage}
  \begin{minipage}[b]{0.45\linewidth}
    \centering
     \begin{overpic}[scale=0.5]
     {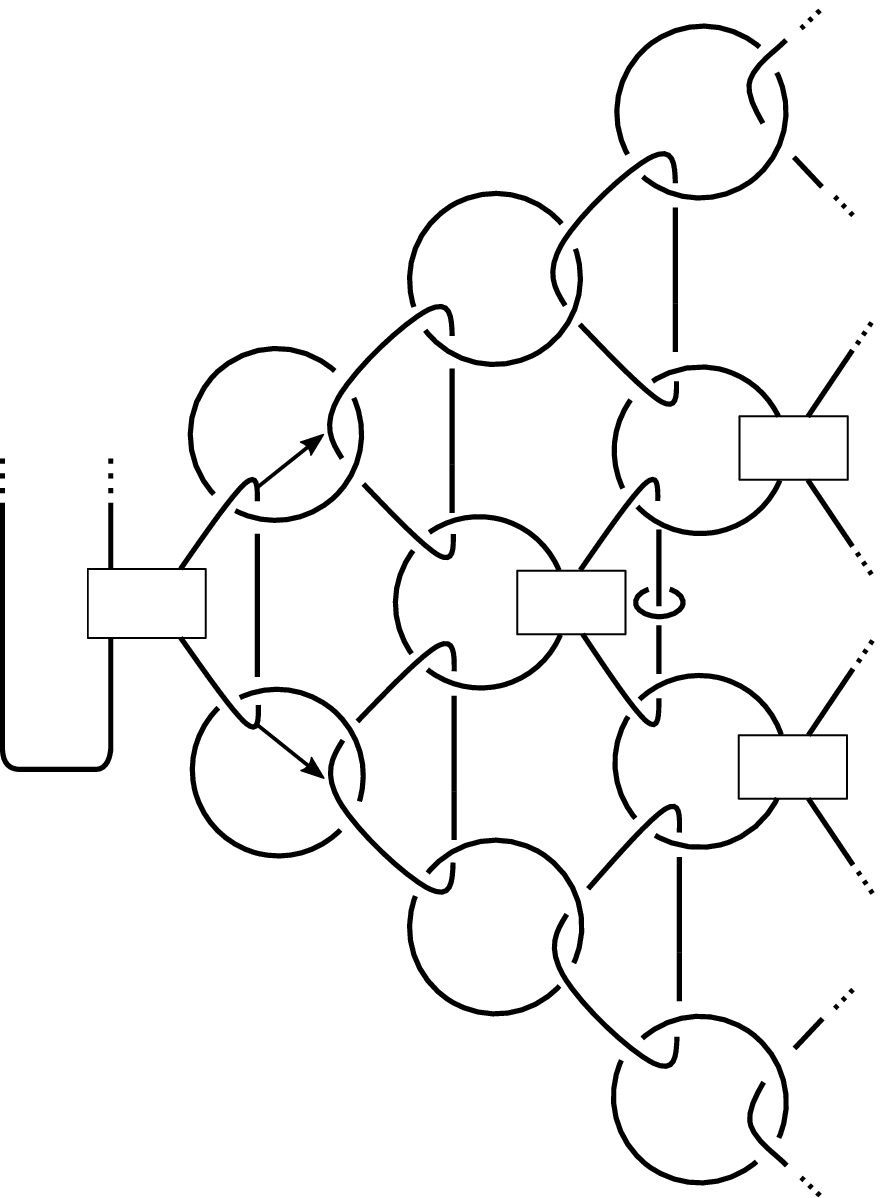}
     \put(1,30){$\langle0\rangle$}
     \put(19.5,22){$\langle0\rangle$}
     \put(19.5,74){$\langle0\rangle$}
     \put(37.5,10){$\langle0\rangle$}
     \put(34,48.5){$\langle0\rangle$}
     \put(37.5,87){$\langle0\rangle$}
     \put(55,-5){$\langle0\rangle$}
     \put(44,34.5){$\langle0\rangle$}
     \put(44,62){$\langle0\rangle$}
     \put(55,101){$\langle0\rangle$}

     \put(9,48){$\pm1$}
     \put(44.5,48){$\pm1$}
     \put(63,34){$\pm1$}
     \put(63,61){$\pm1$}

     \put(58,48){$0$}
     \end{overpic}
    \caption{}
    \label{Theorem 1.1proof2}
  \end{minipage}
\end{figure}

\begin{figure}[htbp]
  \begin{minipage}[b]{0.45\linewidth}
    \centering
     \begin{overpic}[scale=0.5]
     {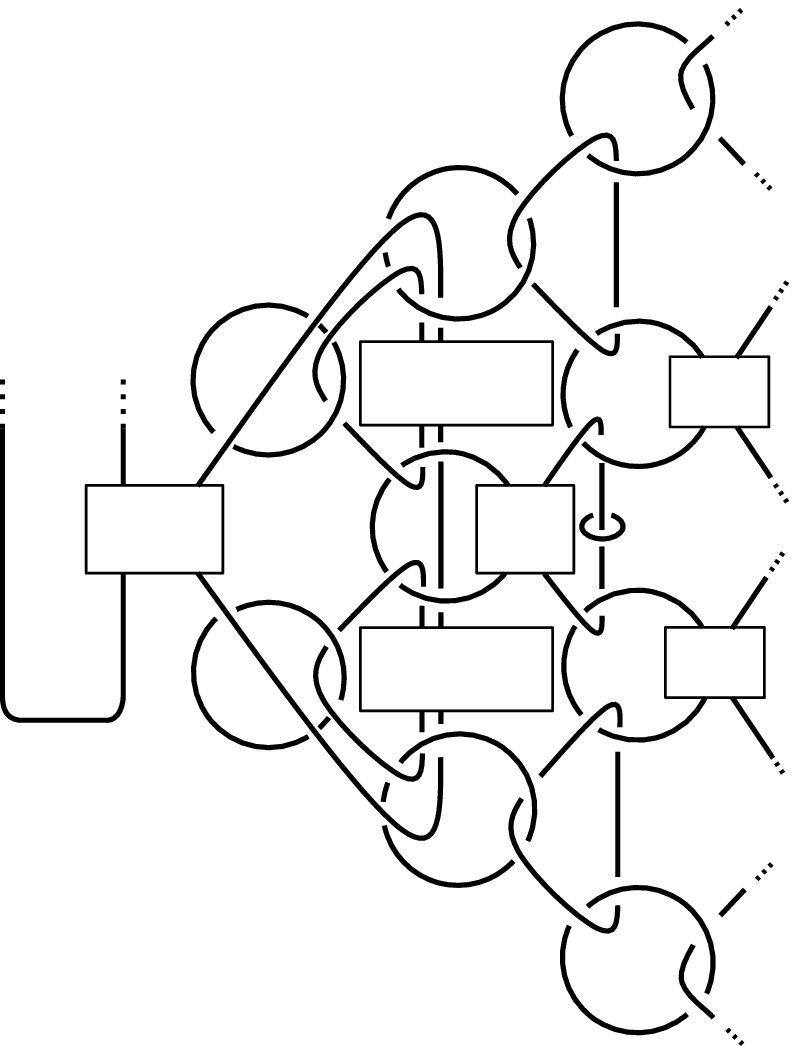}
     \put(2,25){$\langle0\rangle$}
     \put(20,22){$\langle0\rangle$}
     \put(20,74){$\langle0\rangle$}
     \put(39,9){$\langle0\rangle$}
     \put(27,48.5){$\langle0\rangle$}
     \put(39,88){$\langle0\rangle$}
     \put(56,-5){$\langle0\rangle$}
     \put(60,23){$\langle0\rangle$}
     \put(60,73){$\langle0\rangle$}
     \put(56,101){$\langle0\rangle$}

     \put(11,48){$\pm1$}
     \put(46.5,48){$\pm1$}
     \put(65,34.5){$\pm1$}
     \put(65,61){$\pm1$}

     \put(61,48){$0$}

     \put(36,35){$n_{k^2-2}$}
     \put(36,62.5){$n_{k^2-3}$}
     \end{overpic}
    \caption{}
    \label{Theorem 1.1proof3}
  \end{minipage}
  \begin{minipage}[b]{0.45\linewidth}
    \centering
     \begin{overpic}[scale=0.5]
     {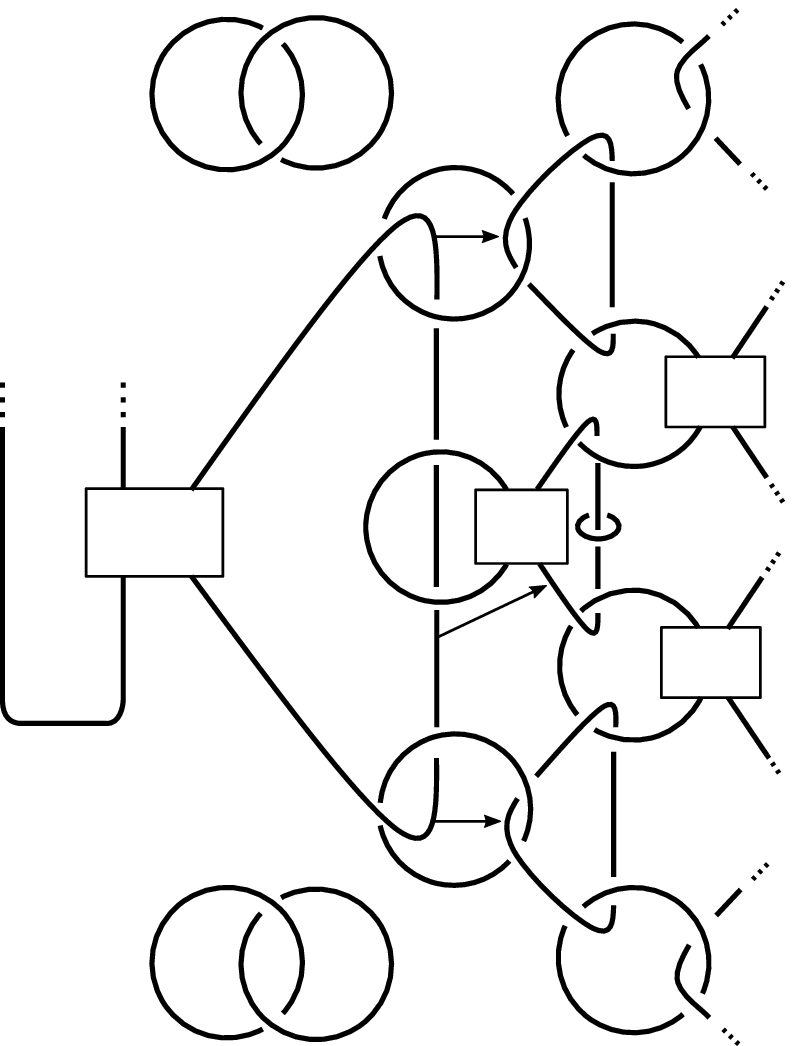}
     \put(2,25){$\langle0\rangle$}
     \put(6,12){$\langle0\rangle$}
     \put(6,83){$\langle0\rangle$}
     \put(39,9){$\langle0\rangle$}
     \put(27,48.5){$\langle0\rangle$}
     \put(39,88){$\langle0\rangle$}
     \put(56,-5){$\langle0\rangle$}
     \put(60,23){$\langle0\rangle$}
     \put(60,73){$\langle0\rangle$}
     \put(56,101){$\langle0\rangle$}

     \put(11,48){$\pm1$}
     \put(46.5,48){$\pm1$}
     \put(65,34.5){$\pm1$}
     \put(65,61){$\pm1$}

     \put(61,48){$0$}
     \end{overpic}
    \caption{}
    \label{Theorem 1.1proof4}
  \end{minipage}
\end{figure}

\begin{figure}[htbp]
  \begin{minipage}[b]{0.45\linewidth}
    \centering
     \begin{overpic}[scale=0.5]
     {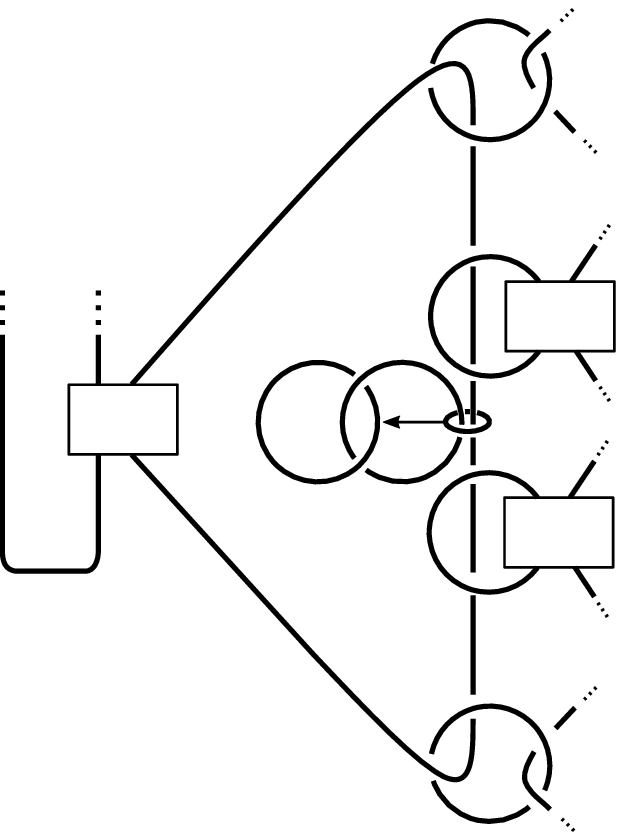}
     \put(1,23){$\langle0\rangle$}
     \put(26,58){$\langle0\rangle$}
     \put(42,0){$\langle0\rangle$}
     \put(40,33){$\langle0\rangle$}
     \put(40,61){$\langle0\rangle$}
     \put(42,96){$\langle0\rangle$}

     \put(10,47.5){$\pm1$}
     \put(63,34){$\pm1$}
     \put(63,60){$\pm1$}

     \put(60,46){$0$}
     \end{overpic}
    \caption{}
    \label{Theorem 1.1proof5}
  \end{minipage}
  \begin{minipage}[b]{0.45\linewidth}
    \centering
     \begin{overpic}[scale=0.5]
     {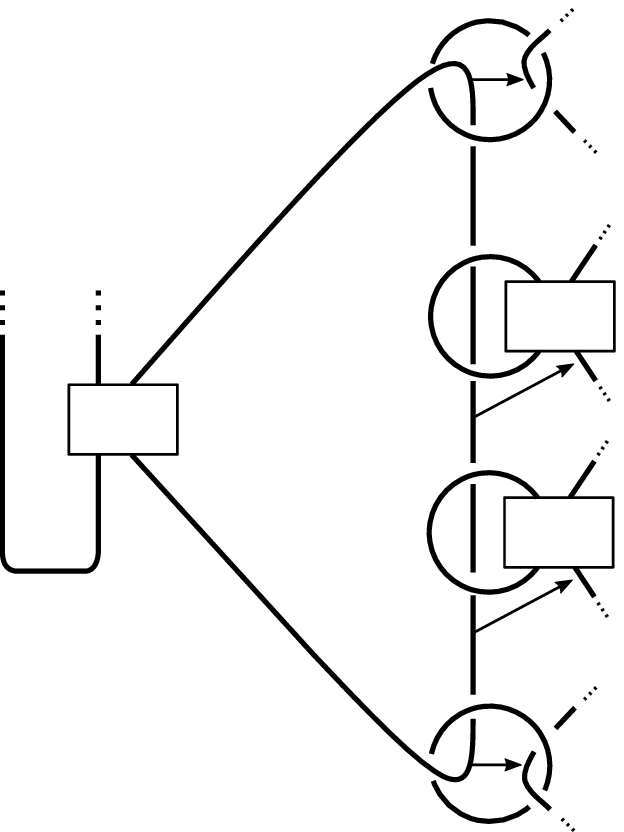}
     \put(1,23){$\langle0\rangle$}
     \put(42,0){$\langle0\rangle$}
     \put(40,33){$\langle0\rangle$}
     \put(40,61){$\langle0\rangle$}
     \put(42,96){$\langle0\rangle$}

     \put(10,47.5){$\pm1$}
     \put(63,34){$\pm1$}
     \put(63,60){$\pm1$}

     \end{overpic}
    \caption{}
    \label{Theorem 1.1proof6}
  \end{minipage}
\end{figure}

\begin{figure}[htbp]
  \begin{minipage}[b]{0.45\linewidth}
    \centering
     \begin{overpic}[scale=0.5]
     {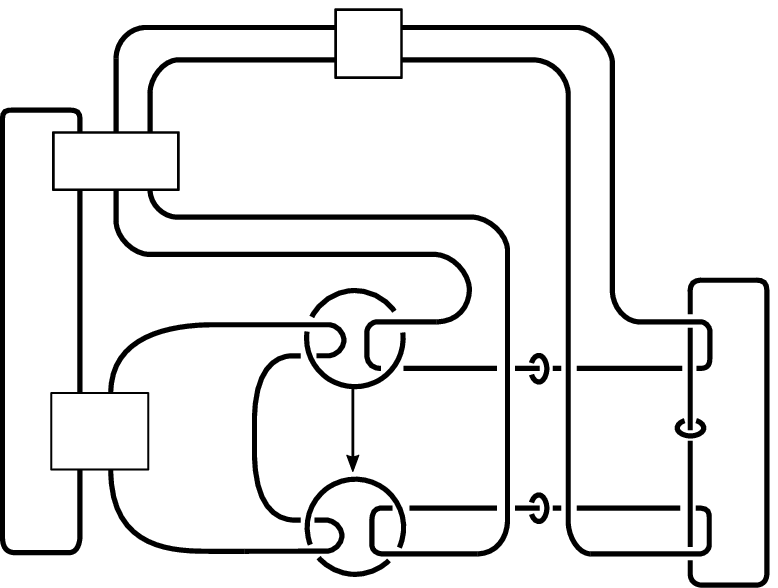}
     \put(13.5,54){$q$}
     \put(46,69){$\varepsilon$}

     \put(12,-4){$\langle L\rangle$}
     \put(81,60){$\langle n_1+\varepsilon\rangle$}
     \put(54,-4){$\langle n_2+\varepsilon\rangle$}

     \put(0,66){$\langle0\rangle$}
     \put(41,-6){$\langle0\rangle$}
     \put(49,20){$\langle0\rangle$}
     \put(89,44){$\langle0\rangle$}

     \put(8,18){$\pm1$}

     \put(68,33){$0$}
     \put(68,15){$0$}
     \put(82,18){$0$}

     \put(105,20){$\cup H(2)$}
     \end{overpic}
    \caption{}
    \label{Theorem 1.1proof7}
  \end{minipage}
  \begin{minipage}[b]{0.45\linewidth}
    \centering
     \begin{overpic}[scale=0.5]
     {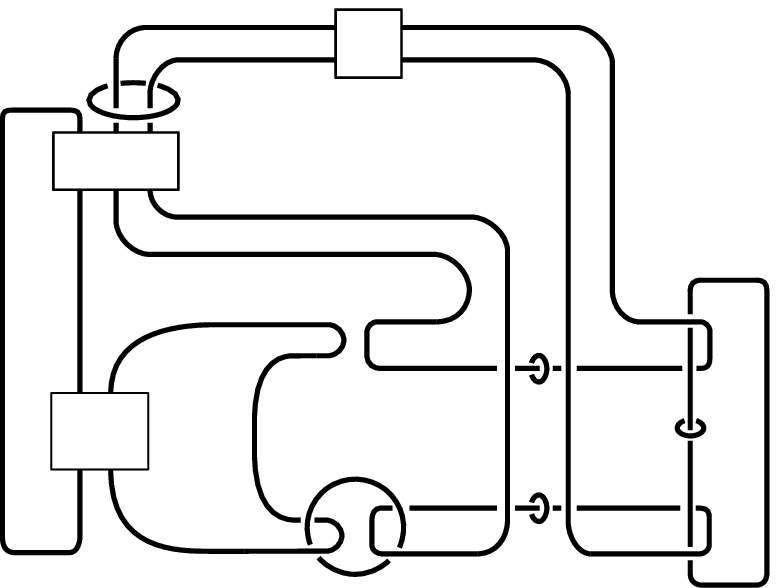}
     \put(13.5,54){$q$}
     \put(46,69){$\varepsilon$}

     \put(12,-4){$\langle L\rangle$}
     \put(81,60){$\langle n_1+\varepsilon\rangle$}
     \put(54,-4){$\langle n_2+\varepsilon\rangle$}

     \put(0,66){$\langle0\rangle$}
     \put(41,-6){$\langle0\rangle$}
     \put(24.5,60){$\langle0\rangle$}
     \put(89,44){$\langle0\rangle$}

     \put(8,18){$\pm1$}

     \put(68,33){$0$}
     \put(68,15){$0$}
     \put(82,18){$0$}

     \put(105,20){$\cup H(2)$}
     \end{overpic}
    \caption{}
    \label{Theorem 1.1proof8}
  \end{minipage}
\end{figure}

\begin{figure}[htbp]
  \begin{minipage}[b]{0.45\linewidth}
    \centering
     \begin{overpic}[scale=0.5]
     {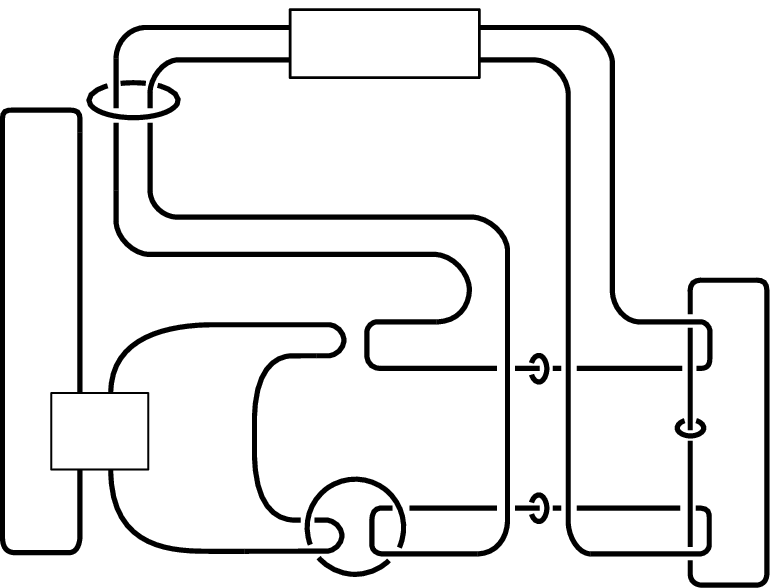}
     \put(40,69){$q+\varepsilon$}

     \put(12,-4){$\langle L\rangle$}
     \put(81,60){$\langle n_1+\varepsilon\rangle$}
     \put(54,-4){$\langle n_2+\varepsilon\rangle$}

     \put(0,66){$\langle0\rangle$}
     \put(41,-6){$\langle0\rangle$}
     \put(24.5,60){$\langle0\rangle$}
     \put(89,44){$\langle0\rangle$}

     \put(8,18){$\pm1$}

     \put(68,33){$0$}
     \put(68,15){$0$}
     \put(82,18){$0$}

     \put(105,20){$\cup H(2)$}
     \end{overpic}
    \caption{}
    \label{Theorem 1.1proof9}
  \end{minipage}
  \begin{minipage}[b]{0.45\linewidth}
    \centering
     \begin{overpic}[scale=0.5]
     {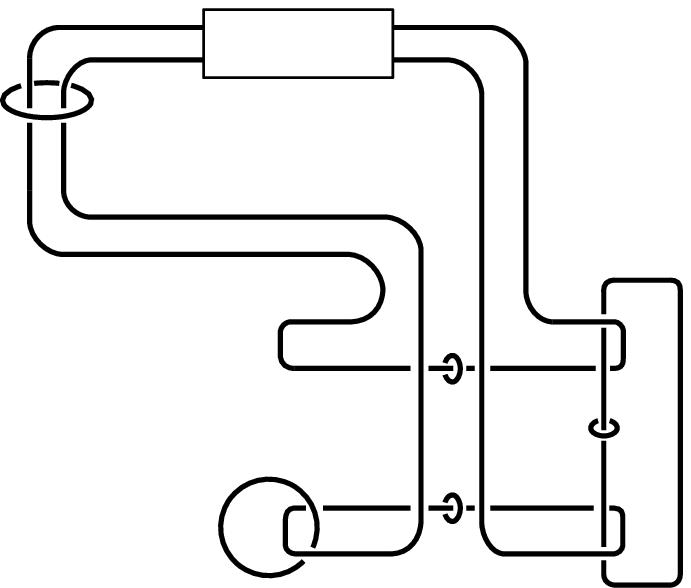}
     \put(33,78){$q+\varepsilon$}

     \put(80,66){$\langle n_1+\varepsilon\rangle$}
     \put(50,-5){$\langle n_2+\varepsilon\rangle$}

     \put(15,68){$\langle0\rangle$}
     \put(33,20){$\langle0\rangle$}
     \put(87,50){$\langle0\rangle$}

     \put(64,36){$0$}
     \put(64,16){$0$}
     \put(80,20){$0$}

     \put(105,20){$\cup H(2)$}
     \end{overpic}
    \caption{}
    \label{Theorem 1.1proof10}
  \end{minipage}
\end{figure}

\begin{figure}[htbp]
  \begin{minipage}[b]{0.45\linewidth}
    \centering
     \begin{overpic}[scale=0.5]
     {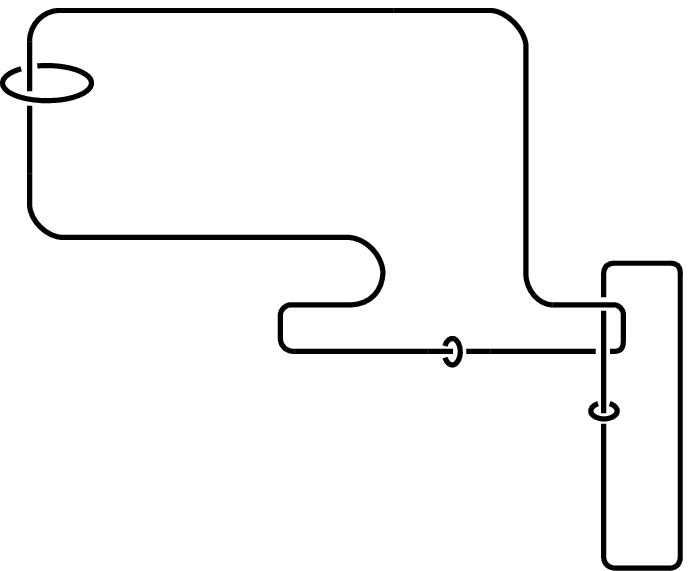}
     \put(64,38){$0$}
     \put(80,20){$0$}

     \put(15,65){$\langle0\rangle$}
     \put(88,50){$\langle0\rangle$}

     \put(80,70){$\langle n_1+\varepsilon\rangle$}
     \put(105,20){$\cup H(1)$}
     \end{overpic}
    \caption{}
    \label{Theorem 1.1proof11}
  \end{minipage}
  \begin{minipage}[b]{0.45\linewidth}
    \centering
     \begin{overpic}[scale=0.5]
     {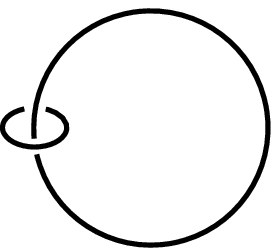}
     \put(-20,32){$0$}
     \put(40,100){$\langle0\rangle$}
     \put(120,32){$\cup H(0)$}
     \end{overpic}
    \caption{}
    \label{Theorem 1.1proof12}
  \end{minipage}
\end{figure}

\begin{figure}[htbp]
  \begin{center}
     \begin{overpic}[scale=0.5]
     {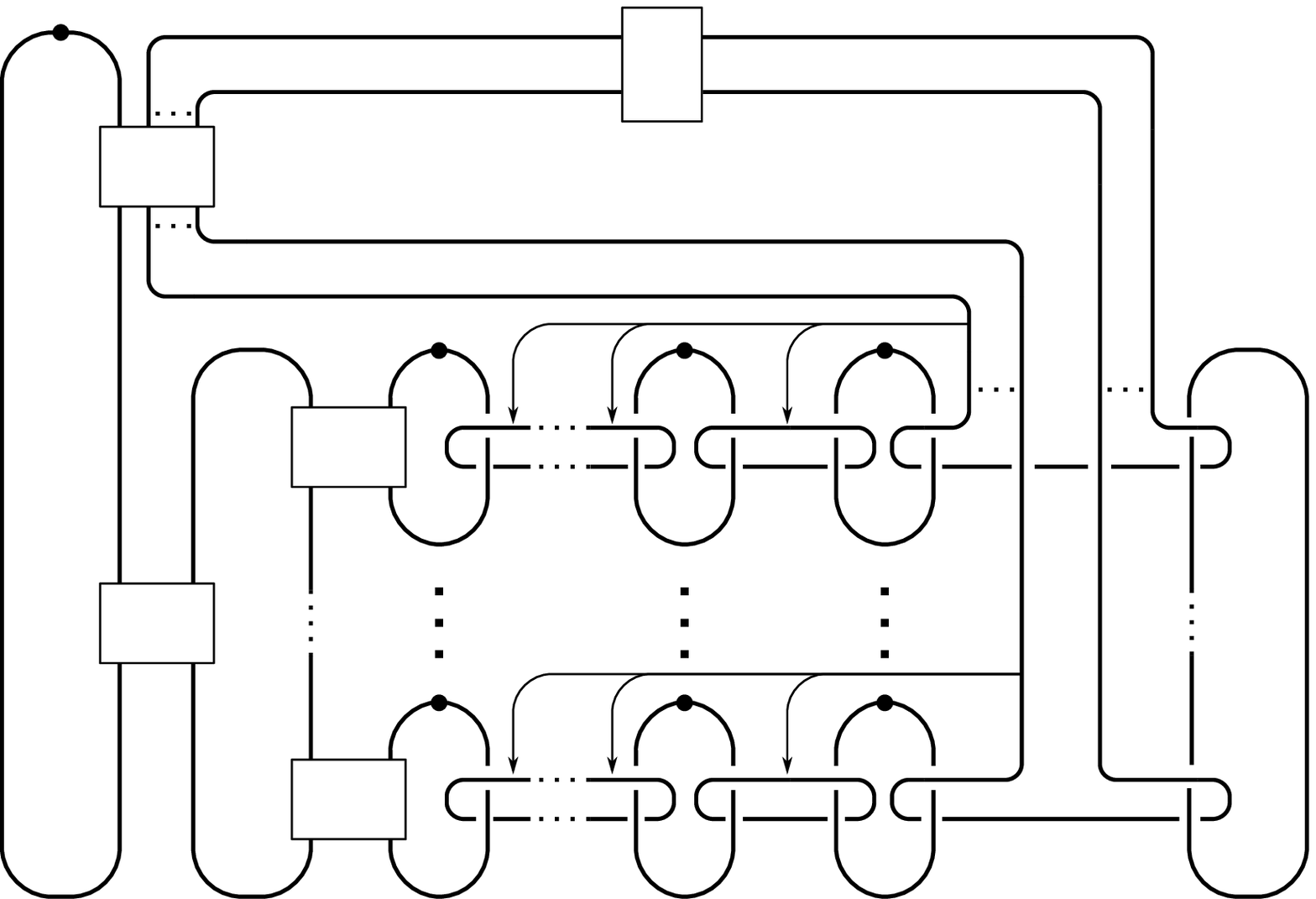}
     \put(94,43){$0$}

     \put(70,68){$n_1+\varepsilon$}
     \put(70,58){$n_s+\varepsilon$}

     \put(11,55){$q$}
     \put(49.5,62.5){$\varepsilon$}

     \put(24,33.5){$m_1$}
     \put(24,6.5){$m_s$}

     \put(9.5,19.5){$\pm1$}

     \put(105,24){$\cup$ 3-handle}
     \put(109.2,19){4-handle}
     \end{overpic}
    \caption{}
    \label{Theorem 1.2proof1}
  \end{center}
\end{figure}

\begin{figure}[htbp]
  \begin{center}
     \begin{overpic}[scale=0.5]
     {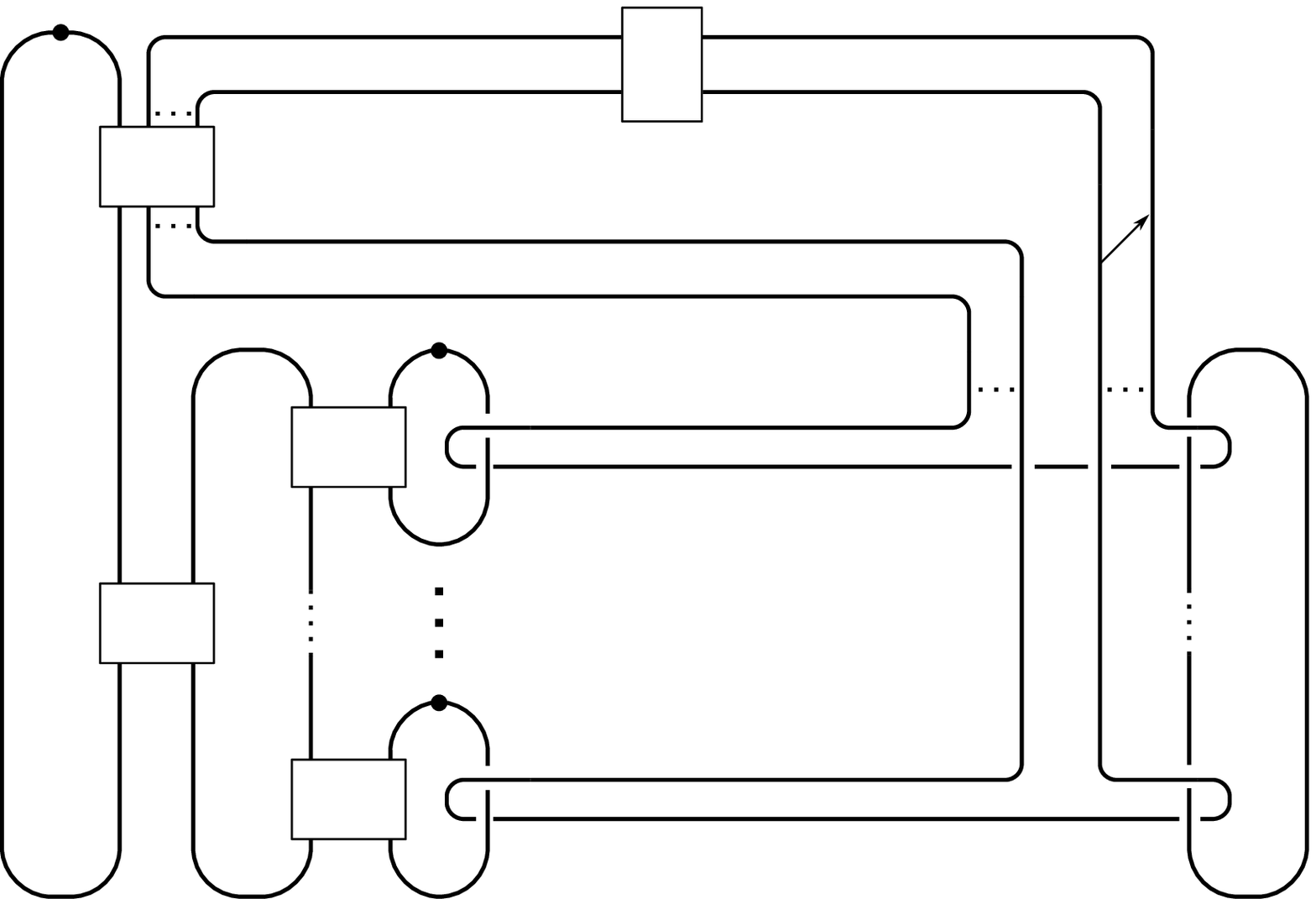}
     \put(94,43){$0$}

     \put(70,68){$x_1+\varepsilon$}
     \put(70,58){$x_s+\varepsilon$}

     \put(11,55){$q$}
     \put(49.5,62.5){$\varepsilon$}

     \put(24,33.5){$m_1$}
     \put(24,6.5){$m_s$}

     \put(9.5,19.5){$\pm1$}

     \put(105,24){$\cup$ 3-handle}
     \put(109.2,19){4-handle}
     \end{overpic}
    \caption{}
    \label{Theorem 1.2proof2}
  \end{center}
\end{figure}

\begin{figure}[htbp]
  \begin{minipage}[b]{0.45\linewidth}
    \centering
     \begin{overpic}[scale=0.5]
     {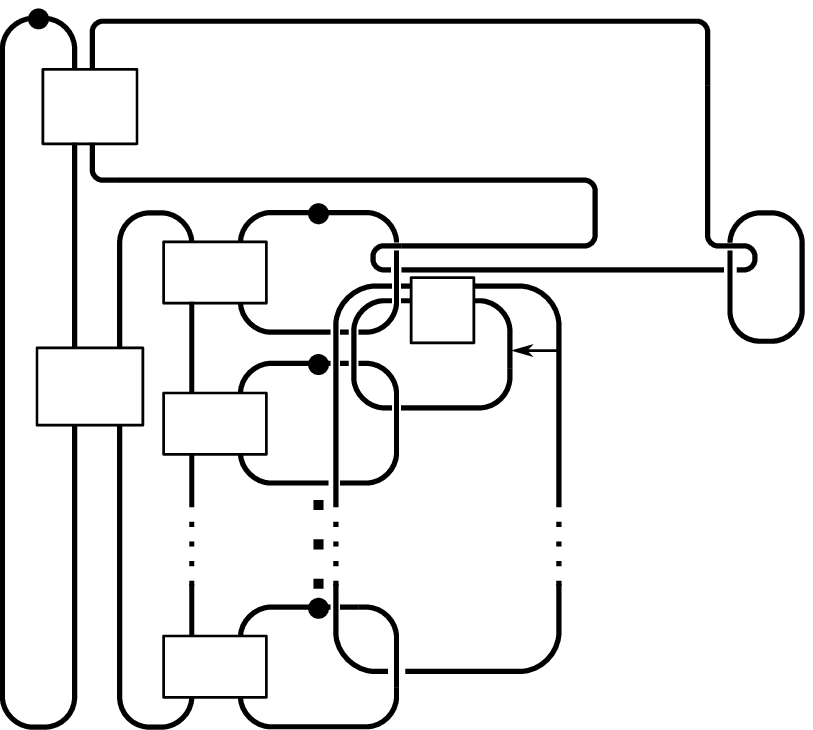}
     \put(30,91){$x_1+\varepsilon$}
     \put(9,76){$q$}

     \put(6,40){$\pm1$}

     \put(22,54.7){$m_1$}
     \put(22,36){$m_2$}
     \put(22,6){$m_s$}

     \put(52.5,50){$a$}

     \put(92.5,65){$0$}

     \put(72,26){$\cup$ 3-handle}
     \put(80.2,17){4-handle}
     \end{overpic}
    \caption{}
    \label{Theorem 1.2proof3}
  \end{minipage}
  \begin{minipage}[b]{0.45\linewidth}
    \centering
     \begin{overpic}[scale=0.5]
     {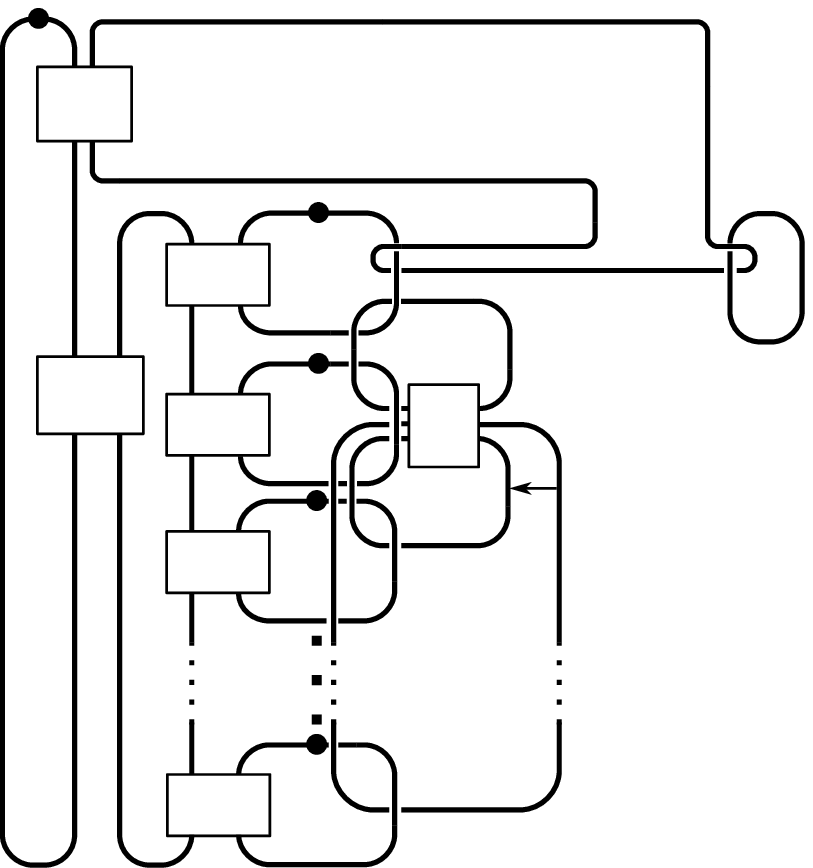}
     \put(30,101){$x_1+\varepsilon$}
     \put(8,87){$q$}

     \put(5.5,52){$\pm1$}

     \put(21,67){$m_1$}
     \put(21,50){$m_2$}
     \put(21,34){$m_3$}
     \put(21,6){$m_s$}

     \put(48.5,49){$b_1$}

     \put(87,78){$0$}

     \put(72,26){$\cup$ 3-handle}
     \put(80.2,17){4-handle}
     \end{overpic}
    \caption{}
    \label{Theorem 1.2proof4}
  \end{minipage}
\end{figure}

\begin{figure}[htbp]
  \begin{minipage}[b]{0.45\linewidth}
    \centering
     \begin{overpic}[scale=0.5]
     {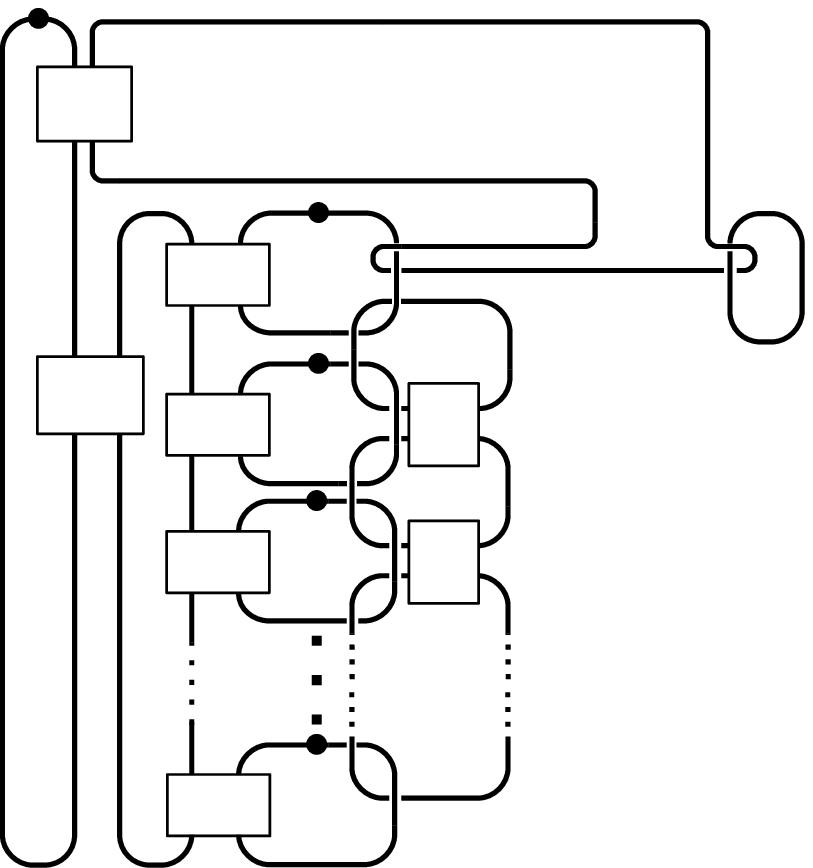}
     \put(30,101){$x_1+\varepsilon$}
     \put(8,87){$q$}

     \put(5.5,52){$\pm1$}

     \put(21,67){$m_1$}
     \put(21,50){$m_2$}
     \put(21,34){$m_3$}
     \put(21,6){$m_s$}

     \put(48.5,49){$b_1$}
     \put(48.5,33){$b_2$}

     \put(87,78){$0$}

     \put(72,26){$\cup$ 3-handle}
     \put(80.2,17){4-handle}
     \end{overpic}
    \caption{}
    \label{Theorem 1.2proof5}
  \end{minipage}
\end{figure}

\begin{figure}[htbp]
  \begin{minipage}[b]{0.45\linewidth}
    \centering
     \begin{overpic}[scale=0.5]
     {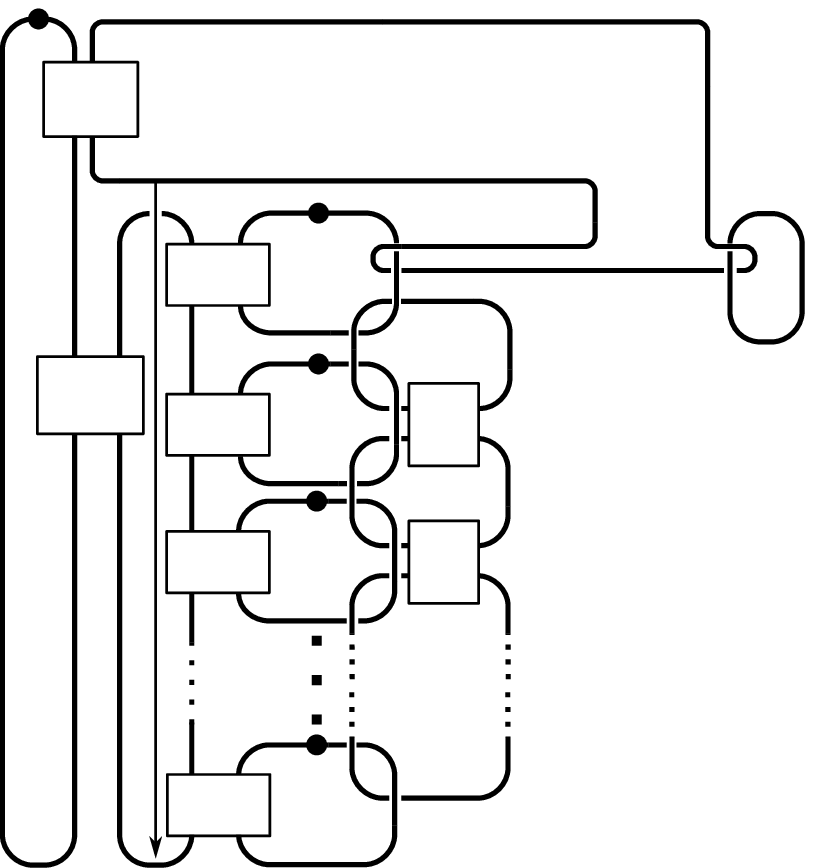}
     \put(30,101){$x_1+\varepsilon$}
     \put(8,87.5){$q$}

     \put(5.5,52){$\pm1$}

     \put(21,67){$m_1$}
     \put(21,50){$m_2$}
     \put(21,34){$m_3$}
     \put(21,6){$m_s$}

     \put(48.5,49){$b_1$}
     \put(48.5,33){$b_2$}

     \put(87,78){$0$}

     \put(72,26){$\cup$ 3-handle}
     \put(80.2,17){4-handle}
     \end{overpic}
    \caption{}
    \label{Theorem 1.2proof6}
  \end{minipage}
  \begin{minipage}[b]{0.45\linewidth}
    \centering
     \begin{overpic}[scale=0.5]
     {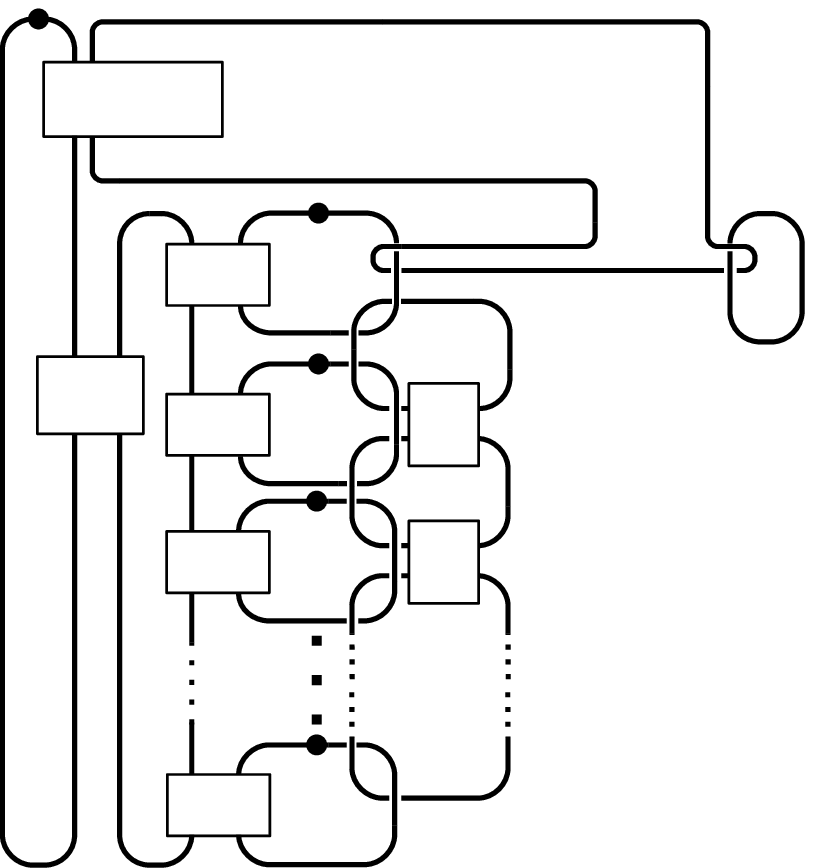}
     \put(30,101){$x_1+\varepsilon$}
     \put(7,87.5){$q\mp1$}

     \put(5.5,52){$\pm1$}

     \put(21,67){$m_1$}
     \put(21,50){$m_2$}
     \put(21,34){$m_3$}
     \put(21,6){$m_s$}

     \put(48.5,49){$b_1$}
     \put(48.5,33){$b_2$}

     \put(87,78){$0$}

     \put(72,26){$\cup$ 3-handle}
     \put(80.2,17){4-handle}
     \end{overpic}
    \caption{}
    \label{Theorem 1.2proof7}
  \end{minipage}
\end{figure}

\begin{figure}[htbp]
  \begin{minipage}[b]{0.45\linewidth}
    \centering
     \begin{overpic}[scale=0.5]
     {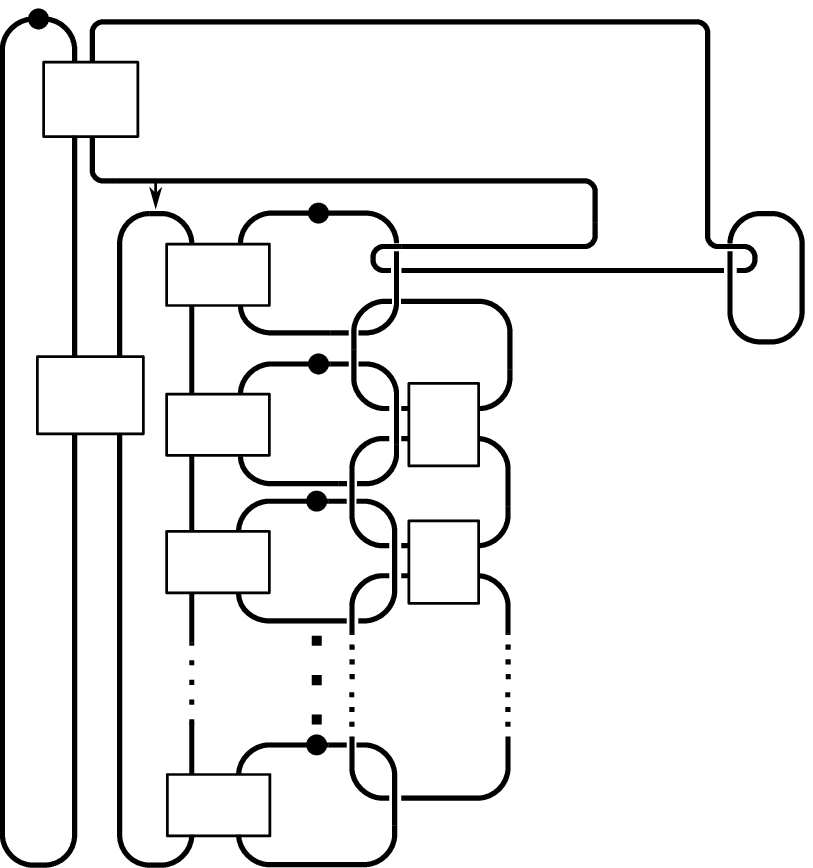}
     \put(30,101){$x_1+\varepsilon$}
     \put(8,87.5){$q$}

     \put(5.5,52){$\pm1$}

     \put(21,67){$m_1$}
     \put(21,50){$m_2$}
     \put(21,34){$m_3$}
     \put(21,6){$m_s$}

     \put(48.5,49){$b_1$}
     \put(48.5,33){$b_2$}

     \put(87,78){$0$}

     \put(72,26){$\cup$ 3-handle}
     \put(80.2,17){4-handle}
     \end{overpic}
    \caption{}
    \label{Theorem 1.2proof8}
  \end{minipage}
  \begin{minipage}[b]{0.45\linewidth}
    \centering
     \begin{overpic}[scale=0.5]
     {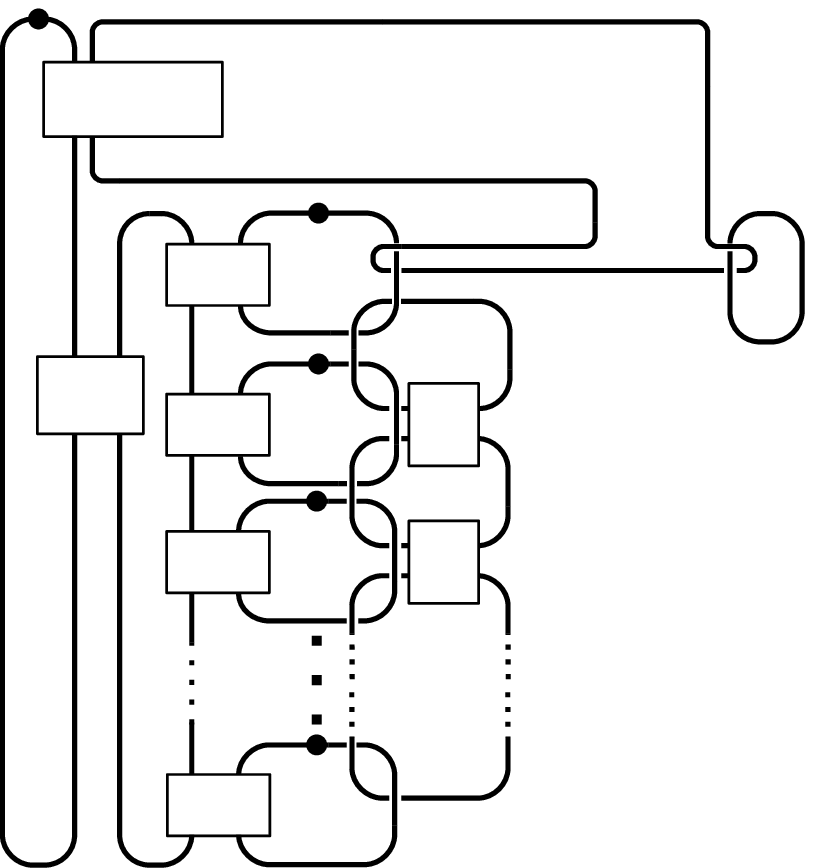}
     \put(30,101){$x_1+\varepsilon$}
     \put(7,87.5){$q\pm1$}

     \put(5.5,52){$\pm1$}

     \put(21,67){$m_1$}
     \put(21,50){$m_2$}
     \put(21,34){$m_3$}
     \put(21,6){$m_s$}

     \put(48.5,49){$b_1$}
     \put(48.5,33){$b_2$}

     \put(87,78){$0$}

     \put(72,26){$\cup$ 3-handle}
     \put(80.2,17){4-handle}
     \end{overpic}
    \caption{}
    \label{Theorem 1.2proof9}
  \end{minipage}
\end{figure}

\begin{figure}[htbp]
  \begin{minipage}[b]{0.3\linewidth}
    \centering
     \begin{overpic}[scale=0.5]
     {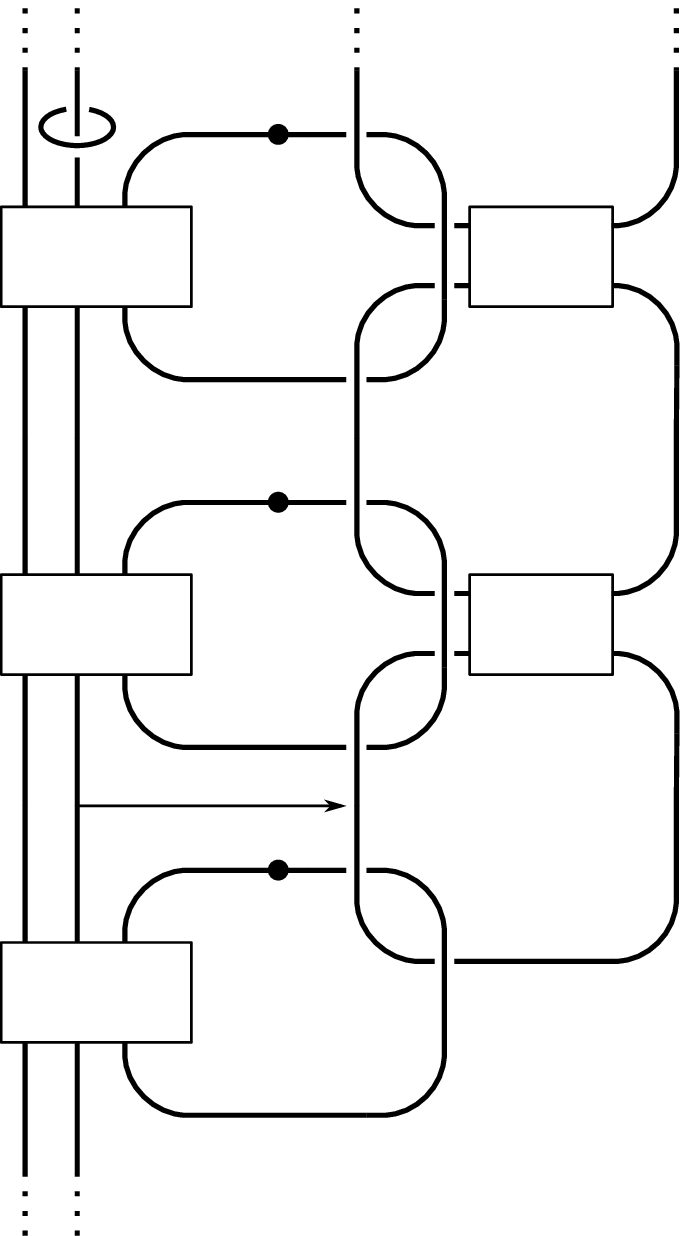}
     \put(2,78.5){$m_{s-2}$}
     \put(2,48.5){$m_{s-1}$}
     \put(2,18.5){$m_s$}

     \put(39,78){$b_{s-3}$}
     \put(39,48){$b_{s-2}$}

     \put(10,90){$0$}
     \end{overpic}
    \caption{}
    \label{Theorem 1.2proof10}
  \end{minipage}
  \begin{minipage}[b]{0.3\linewidth}
    \centering
     \begin{overpic}[scale=0.5]
     {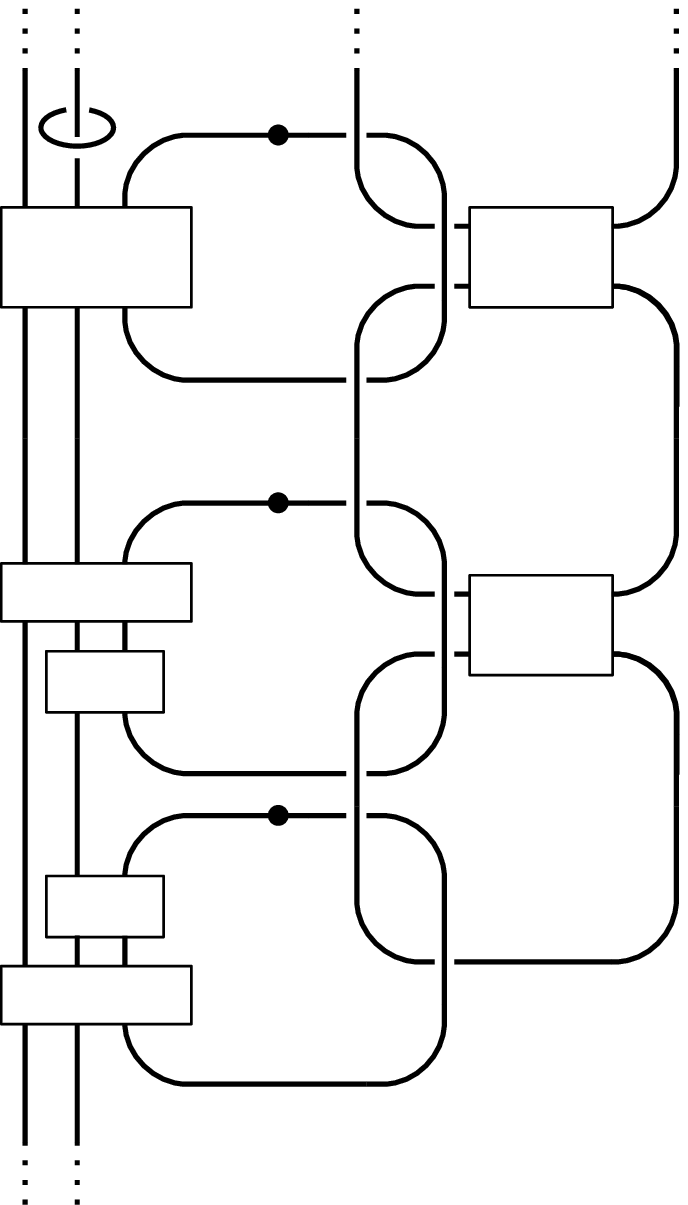}
     \put(2,78){$m_{s-2}$}
     \put(2,50){$m_{s-1}$}
     \put(2,16.7){$m_s$}

     \put(40,78){$b_{s-3}$}
     \put(40,47){$b_{s-2}$}

     \put(10,90){$0$}

     \put(5,42){$-1$}
     \put(7.5,23){$1$}
     \end{overpic}
    \caption{}
    \label{Theorem 1.2proof11}
  \end{minipage}
  \begin{minipage}[b]{0.3\linewidth}
    \centering
     \begin{overpic}[scale=0.5]
     {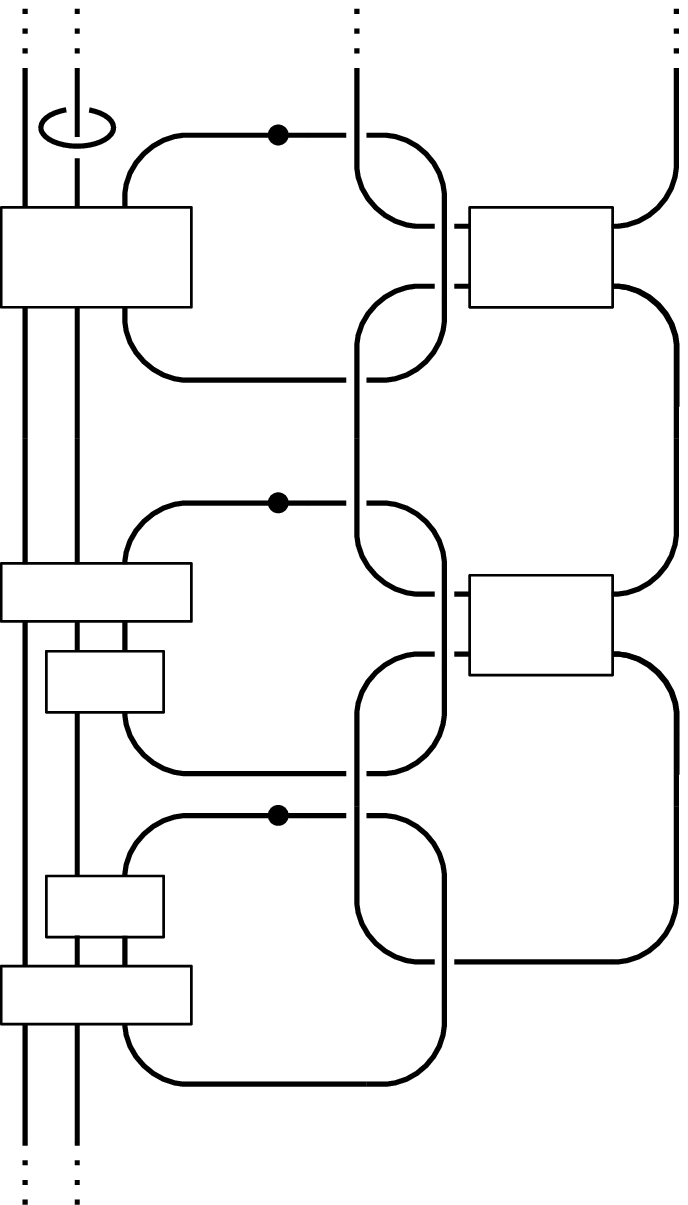}
     \put(2,78){$m_{s-2}$}
     \put(2,50){$m_{s-1}$}
     \put(2,16.7){$m_s$}

     \put(40,78){$b_{s-3}$}
     \put(40,47){$b_{s-2}$}

     \put(10,90){$0$}

     \put(7.5,42){$1$}
     \put(5,23){$-1$}
     \end{overpic}
    \caption{}
    \label{Theorem 1.2proof12}
  \end{minipage}
\end{figure}

\begin{figure}[htbp]
  \begin{minipage}[b]{0.3\linewidth}
    \centering
     \begin{overpic}[scale=0.5]
     {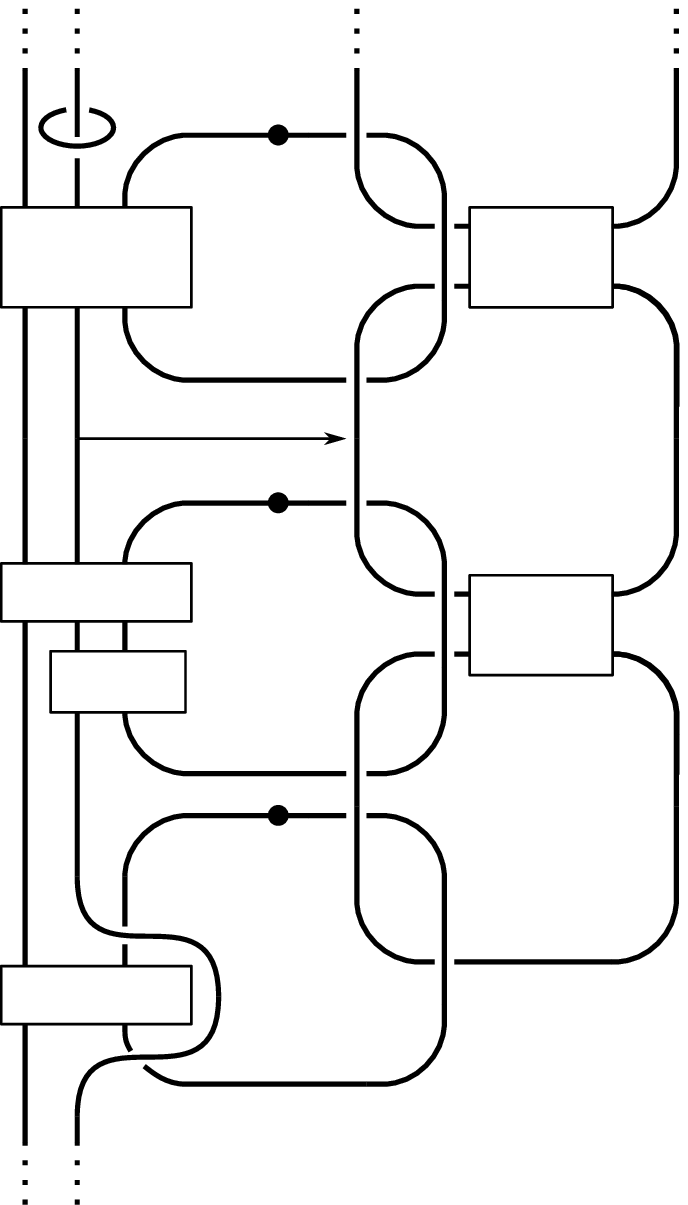}
     \put(2,78){$m_{s-2}$}
     \put(2,50.2){$m_{s-1}$}
     \put(2,16.7){$m_s$}

     \put(40,78){$b_{s-3}$}
     \put(40,47){$b_{s-2}$}

     \put(10,90){$0$}

     \put(6,43){$m_s$}
     \end{overpic}
    \caption{}
    \label{Theorem 1.2proof13}
  \end{minipage}
  \begin{minipage}[b]{0.3\linewidth}
    \centering
     \begin{overpic}[scale=0.5]
     {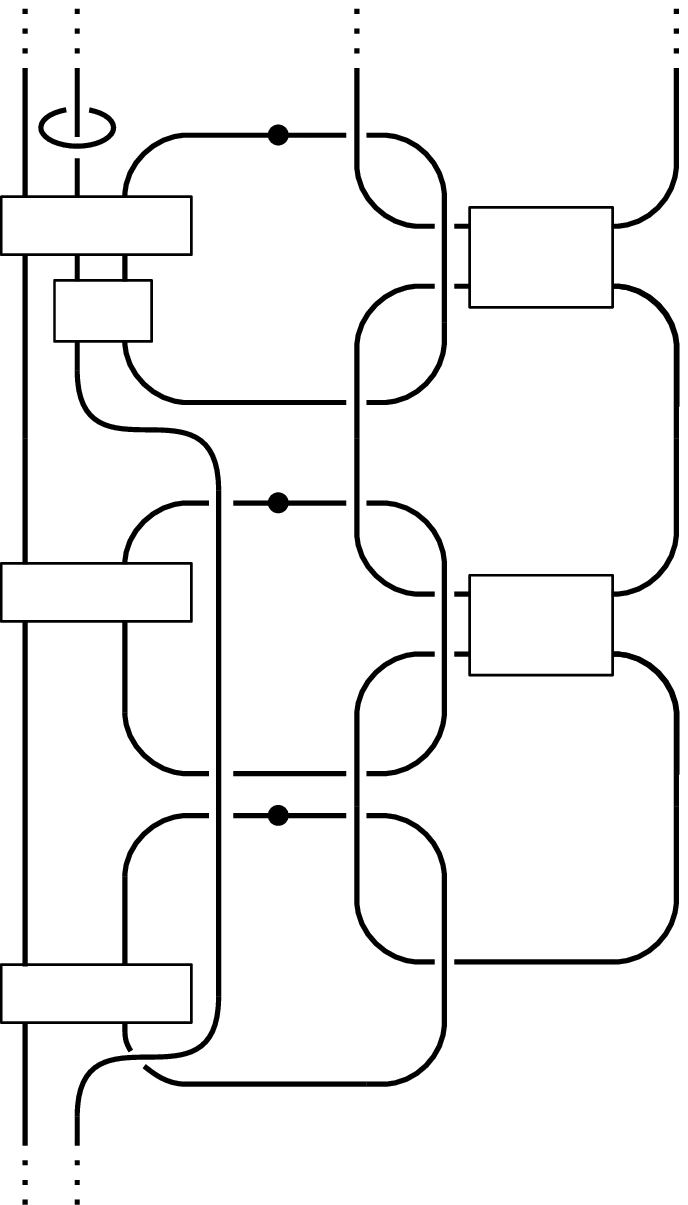}
     \put(2,81){$m_{s-2}$}
     \put(7,73.5){$c$}
     \put(2,50.2){$m_{s-1}$}
     \put(2,16.7){$m_s$}

     \put(40,78){$b_{s-3}$}
     \put(40,47){$b_{s-2}$}

     \put(10,90){$0$}
     \end{overpic}
    \caption{}
    \label{Theorem 1.2proof14}
  \end{minipage}
  \begin{minipage}[b]{0.3\linewidth}
    \centering
     \begin{overpic}[scale=0.5]
     {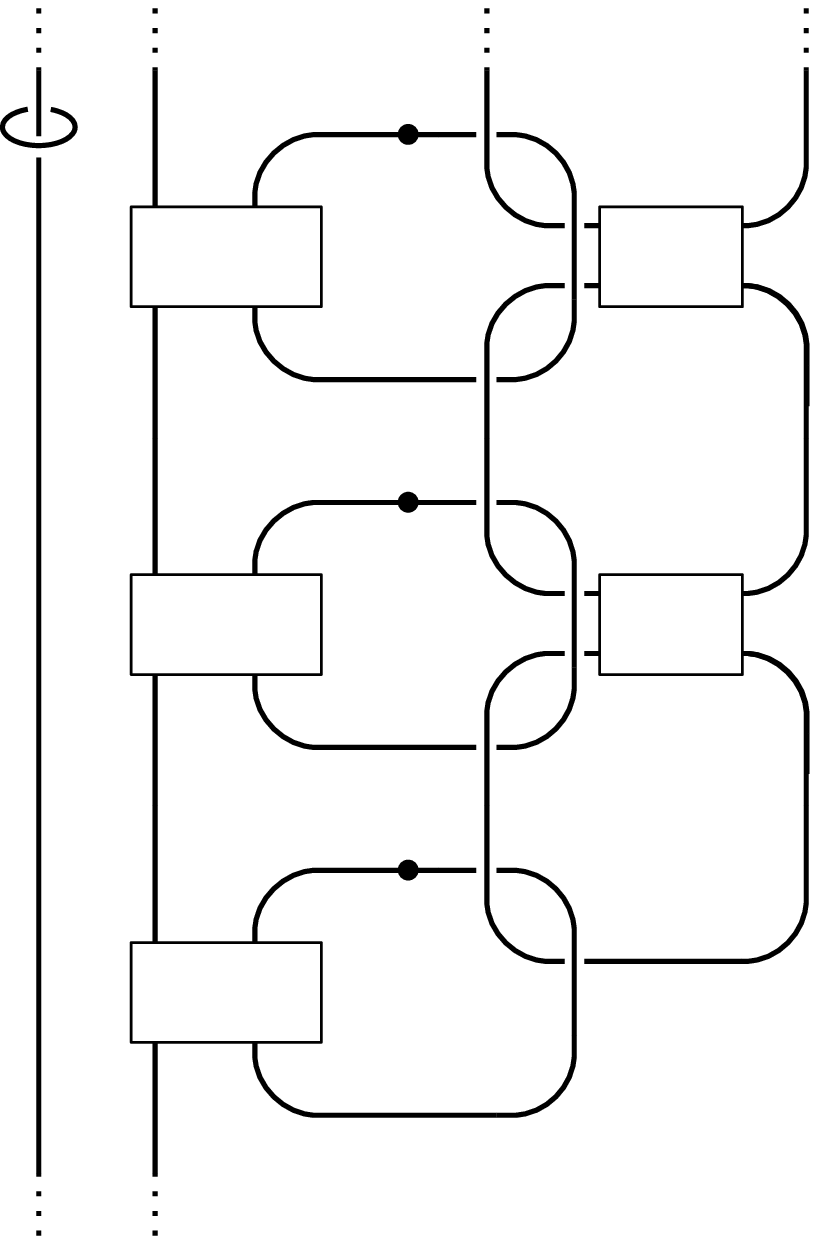}
     \put(12,79){$m_{s-2}$}
     \put(12,49){$m_{s-1}$}
     \put(12,19){$m_s$}

     \put(49.5,78){$b_{s-3}$}
     \put(49.5,48){$b_{s-2}$}

     \put(7,90){$0$}
     \end{overpic}
    \caption{}
    \label{Theorem 1.2proof15}
  \end{minipage}
\end{figure}

\end{document}